\documentclass[a4paper,twoside,10pt]{article}

\usepackage[a4paper,left=3cm,right=3cm, top=3cm, bottom=3cm]{geometry}
\usepackage[latin1]{inputenc}
\usepackage{mathrsfs}
\usepackage{graphicx}
\usepackage{epstopdf}
\epstopdfsetup{
    suffix=,
}
\usepackage{amsmath}
\usepackage{amsthm}
\usepackage{amssymb}
\usepackage{hyperref}
\usepackage{stmaryrd}
\usepackage{float}
\usepackage{bigints}
\usepackage{cite}
\usepackage{color}
\usepackage[abs]{overpic}
\usepackage[font=footnotesize,labelfont=bf]{caption}
\usepackage{cases}
\usepackage{tikz}
\usepackage{algorithmic}
\usepackage{rotating}
\usepackage{blkarray}
\usetikzlibrary{matrix,calc,arrows}
\usepackage[author={Lorenzo}]{pdfcomment}
\usepackage{soul,xcolor}
\usepackage{verbatim}
\usepackage{graphicx}
\usepackage{subcaption}
\usepackage{algorithm}
\usepackage{pifont}
\usepackage{multirow}

\restylefloat{table}
\theoremstyle{plain}
\newtheorem{thm}{Theorem}[section]
\newtheorem{cor}[thm]{Corollary}
\newtheorem{lem}[thm]{Lemma}

\theoremstyle{definition}

\theoremstyle{remark}
\newtheorem{remark}{Remark}

\newcommand{\red}[1]{{\color{red}{#1}}} 
\newcommand{\yellow}[1]{{\color{yellow}{#1}}} 
\newcommand{\green}[1]{{\color{green}{#1}}} 

\newcommand{\h}{h}
\newcommand{\p}{p}
\newcommand{\pbold}{\mathbf \p}
\newcommand{\pboldEpsilon}{\pbold_{\mathcal E}}
\newcommand{\K}{\mathbb K} 
\renewcommand{\div}{\text{div}}
\newcommand{\E}{K}
\newcommand{\SEo}{S^\E_1}
\newcommand{\SEz}{S^\E_0}
\newcommand{\un}{u_n}
\newcommand{\vn}{v_n}
\newcommand{\Pinabla}{\Pi^\nabla_\p}
\newcommand{\Pinablapmo}{\Pi^\nabla_{\p-1}}
\newcommand{\Piz}{\Pi^0_{\p-1}}
\newcommand{\Pizpmo}{\Pi^0_{\p-1}}
\newcommand{\Pizpmt}{\Pi^0_{\p-2}}
\newcommand{\hE}{\h_\E}
\newcommand{\he}{\h_\e}
\newcommand{\n}{\mathbf n}
\newcommand{\phin}{\utildepi}
\newcommand{\upi}{u_\pi}
\newcommand{\utildepi}{\widetilde u_\pi}
\newcommand{\uI}{u_I}
\newcommand{\vI}{v_I}
\newcommand{\taun}{\mathcal T_n}
\newcommand{\En}{\mathcal E_n}
\newcommand{\Enb}{\mathcal E_n^B}
\newcommand{\EnI}{\mathcal E_n^I}
\newcommand{\tautilden}{\widetilde{\mathcal T}_n}
\newcommand{\Vn}{V_n}
\newcommand{\VnE}{\Vn(\E)}
\newcommand{\VtildenE}{\widetilde{V}_n(\E)}
\newcommand{\EE}{\mathcal E^\E}
\newcommand{\e}{e}

\newcommand{\malpha}{m_{\alpha}}
\newcommand{\dof}{\text{dof}}
\renewcommand{\a}{a}
\renewcommand{\b}{b}
\renewcommand{\c}{c}
\newcommand{\aE}{\a^\E}
\newcommand{\bE}{\b^\E}

\newcommand{\an}{\a_n}
\newcommand{\bn}{\b_n}
\newcommand{\cn}{\c_n}
\newcommand{\anE}{\an^\E}
\newcommand{\bnE}{\bn^\E}
\newcommand{\cnE}{\cn^\E}
\newcommand{\V}{V}
\newcommand{\f}{f}

\newcommand{\lambdan}{\lambda_n}
\newcommand{\deltan}{\delta_n}
\newcommand{\T}{T}
\newcommand{\Tn}{\T_n}
\newcommand{\B}{\mathcal B}
\newcommand{\Bn}{\B_n}
\newcommand{\kcal}{\mathfrak{k}}
\newcommand{\vbold}{\mathbf v}
\newcommand{\wn}{w_n}
\newcommand{\M}{\cal M}
\newcommand{\card}{\text{card}}

\newcommand{\qpmo}{q_{\p-1}}
\newcommand{\nE}{n^\E}
\newcommand{\Omegatilde}{\widetilde \Omega}

\setcounter{secnumdepth}{4}
\setcounter{tocdepth}{4}

\begin{tiny}
\author{
\normalsize{
}}
\end{tiny}

\date{}

\title{\small{\textbf{The $\p$- and $\h\p$-versions of the virtual element method for elliptic eigenvalue problems}}}
\date{}
\author{\footnotesize{O. \v{C}ert\'ik \thanks{Group CCS-2, Computer, Computational and Statistical Division, Los Alamos National Laboratory, 87545 Los Alamos, New Mexico, USA (certik@lanl.gov)},\
F. Gardini\thanks{Dipartimento di Matematica F. Casorati, Universit\`a di Pavia, 27100 Pavia, Italy (francesca.gardini@unipv.it)},\
G. Manzini\thanks{Group T-5, Theoretical Division, Los Alamos National Laboratory, 87545 Los Alamos, New Mexico, USA (gmanzini@lanl.gov)},\ 
L. Mascotto\thanks{Fakult\"at f\"ur Mathematik, Universit\"at Wien, 1090 Vienna, Austria (lorenzo.mascotto@univie.ac.at)},\ 
G. Vacca\thanks{Dipartimento di Matematica e Applicazioni, Universit\`a degli Studi di Milano Bicocca, 20125 Milano, Italy (giuseppe.vacca@unimib.it)}}}

\begin{document}
\maketitle
\begin{abstract}
\noindent
We discuss the $\p$- and the $\h\p$-versions of the virtual element method for the approximation of eigenpairs of elliptic operators with a potential term on polygonal meshes.
An application of this model is provided by the Schr\"odinger equation with a pseudo-potential term.
We present in details the analysis of the $\p$-version of the method, proving exponential convergence in the case of analytic eigenfunctions. The theoretical results are supplied with a wide set of experiments.
We also show numerically that, in the case of eigenfunctions with finite Sobolev regularity, an exponential approximation of the eigenvalues in terms of the cubic root of the number of degrees of freedom can be obtained by employing $\h\p$-refinements.
Importantly, the geometric flexibility of polygonal meshes is exploited in the construction of the $\h\p$-spaces.



\medskip\noindent
\textbf{AMS subject classification}: 65L15, 65N15, 65N30

\medskip\noindent
\textbf{Keywords}: virtual element methods, polygonal meshes, eigenvalue problems, $\p$- and $\h\p$-Galerkin methods
\end{abstract}

\section{Introduction} 
In the last five years, the virtual element method (VEM)~\cite{VEMvolley,equivalentprojectorsforVEM}, has established itself as one of the most ductile and flexible Galerkin methods for the approximation of solutions to partial differential equations (PDEs) on polygonal and polyhedral meshes, i.e., meshes with arbitrarily-shaped polygonal/polyhedral (polytopal, for short) elements.
Implementation details can be found in~\cite{hitchhikersguideVEM}.
The method has been proved to be very successful for a number of mathematical/engineering problems,
an extremely short list being given by References~\cite{vacca2018h,VEMmagnetostatic2D,fumagalli2018sisc,artioli2017CMAME,VEM_fullync_biharmonic,absv_VEM_cahnhilliard,Wriggers-contact,VEM_DD_basic,gain2014virtual}.

The VEM is a generalization of the finite element method (FEM) to polygonal grids~\cite{Manzini-Russo-Sukumar:2014}, and is based on tools stemming from the mimetic finite differences~\cite{BLM_MFD, lipnikov2014mimetic}.
The VEM is a generalization of the finite element method (FEM) to polygonal grids, and is based on tools stemming from the mimetic finite differences~\cite{BLM_MFD, lipnikov2014mimetic}.
The main idea of the method is that, to standard piecewise polynomials, additional functions  allowing the construction of suitable global space are added;
such functions are defined implicitly as solutions to local PDEs and therefore are unknown in closed form.
As a consequence, the exact forms appearing in the weak formulation of the problem are not computable;
rather, they are replaced by suitable discrete counterparts that have to be computable in terms of the degrees of freedom and that are based on two main ingredients:
projectors onto polynomial spaces, and bilinear forms stabilizing the method on the kernel of such projectors.

The aim of the present work is to discuss the approximation of eigenpairs of certain elliptic operators by means of VEM.
Despite the novelty of this method, virtual elements for the approximation of eigenvalues have been applied to a plethora of different problems, such as
the Poisson problem~\cite{gardini2018nonconforming,VEM_eig_basic}, the Poisson problem with a potential term~\cite{VEM_eig_Schroedinger}, the Steklov eigenvalue problem~\cite{MoraRiveraRodriguez_aposSteklov, VEMchileans},
transmission problems~\cite{trasmissionVEMeig}, the vibration problem of Kirchhoff plates~\cite{VEM_vibration_Kirchhoff}, and the acoustic vibration problem~\cite{acousticVEM}.
We also highlight that the approximation of eigenvalues with polygonal methods has been targeted in the context of the hybrid-high order method~\cite{HHO2017eig} and of the mimetic finite differences~\cite{MFD2011eig}.

In all the above-mentioned approaches, the focus of the analysis is the so-called $\h$-version of the method, i.e., the convergence of the error is achieved by keeping fixed the dimension of the local spaces, and by refining the underlying polygonal grids.
One of the novelty of the present paper is that we investigate the approximation by means of VEM of eigenvalue problems employing both the $\p$- and the $\h\p$-versions of the method.
In the former approach, the convergence is obtained by keeping fixed the mesh and by increasing the dimension of the local spaces.
The latter approach, see~\cite{SchwabpandhpFEM, BabuGuo_hpFEM}, makes instead use of a combination of the $\h$- and of the $\p$-versions;
in particular, the $\h\p$-gospel states that the meshes have to be refined on those elements where the exact solution has a finite Sobolev regularity,
whereas the polynomial degree increases in a nonuniform fashion on those elements where the solution is smooth.

The advantage of using the $\p$- and the $\h\p$-versions of a Galerkin method over their $\h$-counterpart, is that, in the latter case, the method converges algebraically in terms of the mesh size,
with rate depending on the polynomial degree and on the regularity of the solution.
On the contrary, exponential convergence can be proven in the former cases; more precisely, for analytic solutions, the $\p$-version converges exponentially in terms of the polynomial degree~$\p$,
whereas, for solutions with finite Sobolev regularity, the $\h\p$-version gives exponential convergence in terms of the cubic root of the number of degrees of freedom.
The literature of $\p$- and $\h\p$-continuous and discontinuous FEM for the approximation of the eigenvalues is particularly wide.
We limit ourselves here to cite the works of Giani and collaborators, see for instance~\cite{giani2013errorhp, giani2014hp, giani2015hp} and
a paper of Sauter~\cite{sauter2010hp}, where error estimates are proven with bounds that are explicit in the mesh size, in the polynomial degree, and in the eigenvalues;
the work~\cite{davydov2017convergence} focuses instead on $\h\p$-adaptive FEM in the framework of eigenvalues in quantum mechanics.

The $\p$- and the $\h\p$-versions of VEM have been investigated in a series of works:
the analysis for quasi-uniform and geometrically graded meshes was the topic of~\cite{hpVEMbasic, hpVEMcorner},
a $\p$-multigrid algorithm was investigated in~\cite{pVEMmultigrid}; finally, \cite{hpVEMapos} was devoted to $\h\p$-residual-based a posteriori error analysis.
In all these works, the target problem was the Poisson problem.

An additional novelty of this paper is that we extend the $\p$- and $\h\p$-analysis of VEM to the case of more general elliptic problems,
namely, we allow for variable diffusivity tensor and for the presence of a (smooth) potential term.
With respect to the Poisson case, we face here additional hindrances due to the fact that we employ some special virtual element spaces, that is, the so-called enhanced virtual element spaces~\cite{equivalentprojectorsforVEM}:
(i) $\p$-best interpolation estimates in enhanced virtual element spaces can be suboptimal; (ii) a stabilization for the $L^2$ inner product with explicit bounds in terms of~$\p$ has to be figured out.
Moreover, at the practical level, one has to be careful in defining a ``clever'' basis of the space, since a bad choice could lead to a very ill-conditioned method;
in order to avoid such situation, we will resort to the special bases discussed in~\cite{fetishVEM, fetishVEM3D}.

As already underlined, we focus on the approximation of the eigenvalues and eigenfunctions of elliptic operators consisting of a second order term (with variable diffusion tensor) plus a zero-th order pseudo-potential term.
This corresponds to the case of a Schr\"odinger equation with a pseudo-potential term, which is a basic brick to face more complex problems stemming from the density functional theory~\cite{bader1991quantum, gross2013density, yang2003density}.
We highlight that the analysis for more general elliptic problems, e.g. including a convective term, follows combining the techniques of the present paper with those in~\cite{bbmr_VEM_generalsecondorderelliptic}.

The paper is organized as follows. Having introduced the method (including the local and global  discrete spaces, and the discrete bilinear forms)
and its approximation properties in Section~\ref{section:the:virtual:element:method},
we discuss the convergence analysis in Section~\ref{section:convergence:analysis}; here, we use the tools stemming from the Babu\v ska-Osborn theory~\cite{BabuskaOsborn}, see also~\cite{boffiAN}.
Section~\ref{section:numerical:results} is committed to present a number of numerical experiments including the $\p$- and the $\h\p$-versions of the method;
in the latter case, we employ meshes that geometrically graded towards the singularities of the eigenfunctions;
the construction of such graded meshes exploits the geometric flexibility of polygonal meshes.
The conclusions are stated in Section~\ref{section:conclusion}.

\paragraph*{Notation}
Throughout the paper, we shall employ the standard notation for Sobolev spaces. In particular, given $D \subseteq \mathbb R^n$, $n=1,2$, we denote by $H^s(D)$, $s\in \mathbb R_+$, the Sobolev space of order~$s$ over~$D$ and we denote by
\[
(\cdot, \cdot)_{s,D},\quad\quad \vert \cdot \vert_{s,D}, \quad\quad \Vert \cdot \Vert_{s,D},
\]
the associated $H^s$ inner product, seminorm, and norm, respectively.

The space $H^{\frac{1}{2}}(\partial D)$, where $\partial D$ is a Lipschitz boundary,
is defined as the space of $H^0(\partial D) = L^2(\partial D)$ functions over~$\partial D$, with finite A\-ron\-szajn-Slo\-bo\-de\-ckij seminorm
\[
\vert u \vert _{\frac{1}{2}, \partial D}^2 = \int_{\partial D}\int_{\partial D} \frac{\vert u(\xi) - u(\eta)\vert^2}{\vert \xi - \eta \vert^2} d\xi\,d\eta.
\]
The space $H^{-\frac{1}{2}}(\partial D)$ represents the dual space of $H^{\frac{1}{2}}(\partial D)$.
Besides, we denote by $\mathbb P_\ell(D)$, $\ell \in \mathbb N$, the space of polynomials of degree smaller than or equal to~$\ell$ over~$D$, and by~$\pi_\ell$ its dimension.
Instead, given~$\ell_1$ and~$\ell_2$ such that~$\ell_1< \ell_2$, the space $\mathbb P_{\ell_2} (D)\setminus \mathbb P_{\ell_1} (D)$ represents the space of the polynomials of degree~$\ell_2$, that are orthogonal in~$L^2(D)$ to the space~$\mathbb P_{\ell_1}(D)$.
Finally, given two positive quantities~$a$ and~$b$, we write~$a \lesssim b$ in lieu of ``there exists a constant $c$, independent of the mesh size and of the polynomial degree, such that $a \le c\, b$''.
Moreover, we write~$a\approx b$ meaning~$a \lesssim b$ and~$b\lesssim a$ at the same time.


\paragraph*{The continuous problem}
Given a domain~$\Omega \subset \mathbb R^2$,
let $\K: \Omega \rightarrow \mathbb R^{2\times 2}$ be a symmetric positive definite tensor with
\begin{equation} \label{property_K}
\kcal_* \vert \vbold \vert_{\ell^2} \le  (\K \vbold) \cdot \vbold \le \kcal^* \vert \vbold \vert_{\ell^2} \quad \forall \vbold \in \mathbb R^2,
\end{equation}
where $\vert \cdot \vert_{\ell^2}$ denotes the Euclidean norm in~$\mathbb R^2$, for some given constants~$0<\kcal_*\le \kcal^*$ independent of the discretization parameters,
and let $\V: \Omega \rightarrow \mathbb R$ be such that
\begin{equation} \label{property_V}
0\le \nu_* \le \V \le \nu^* \quad \text{almost everywhere in~$\Omega$},
\end{equation}
for some given constants~$\nu_*$ and $\nu^*$ independent of the discretization parameters.

We look for nonzero functions~$u$ and for positive real numbers~$\lambda$ satisfying
\begin{equation} \label{continuous:problem:strong}
\begin{cases}
-\div(\K \cdot \nabla u) + \V \, u = \lambda u & \text{in }\Omega\\
u=0 & \text{on }\partial \Omega.
\end{cases}
\end{equation}
Note that~$u$ is defined up to a multiplicative factor. We decide to fix~$\Vert u \Vert_{0,\Omega}=1$.

In weak formulation, the eigenvalue problem~\eqref{continuous:problem:strong} reads:
\begin{equation} \label{continuous:problem:weak}
\begin{cases}
\text{find }(\lambda,u) \in \mathbb R \times H^1_0(\Omega) \text{ with $\Vert u \Vert_{0,\Omega}=1$ such that}\\
\a(u,v) + \b(u,v)= \lambda \c(u,v) \quad \forall v \in H^1_0(\Omega),\\
\end{cases}
\end{equation}
where we have set
\begin{equation} \label{notation:for:bilinear:forms}
\a(u,v) = (\K \nabla u, \nabla v)_{0,\Omega},\quad \b(u,v) = (\V  u, v)_{0,\Omega},\quad \c(u,v) = (u, v)_{0,\Omega} \quad \forall u,\, v \in H^1(\Omega).
\end{equation}
We will also make use of the source problem associated with~\eqref{continuous:problem:weak}: given $\f\in H^1(\Omega)$,
\begin{equation} \label{weak:formulation:continuous:source}
\begin{cases}
\text{find } u \in H^1_0(\Omega) \text{ such that}\\
\a(u,v) + \b(u,v)= \c(\f,v) \quad \forall v \in H^1_0(\Omega).\\
\end{cases}
\end{equation}
We define the solution operator $\T \in \mathcal L(H^1(\Omega))$ of the source problem~\eqref{weak:formulation:continuous:source} as
\begin{equation*}
\B(\T \f,v)  = \c(\f,v)\quad \forall v\in H^1_0(\Omega),
\end{equation*}
where we have set $\B(\cdot, \cdot) = \a(\cdot, \cdot) + \b(\cdot, \cdot)$. The operator~$\T$ is self-adjoint, compact (thanks to the Sobolev embedding theorems), and positive definite.

We observe that, given $(\lambda,w)\in \mathbb R \times H^1_0(\Omega)$ an eigenpair solution to~\eqref{continuous:problem:weak}, then
\[
\a(w,v) + \b(w,v) = \c(\lambda w,v)  = a(\T \lambda w,v) + b(\T \lambda w,v)  \quad \forall w  \in H^1_0(\Omega).
\]
Thus, $\T (\lambda w) = w$ and then~$\T (w) =\frac{1}{\lambda} w$, which means that $(\frac{1}{\lambda}, w)$ is an eigenpair of~$\T$.
As a consequence, in order to approximate the eigenfunctions and the eigenvalues of the problem~\eqref{continuous:problem:weak}, it suffices to approximate the eigenfunctions and eigenvalues of~$\T$.
This will be tackled with the tools provided by the Babu\v ska-Osborn theory~\cite{BabuskaOsborn}.

\section{The virtual element method} \label{section:the:virtual:element:method}
In this section, we discuss the virtual element method tailored for the approximation of the solutions to problem~\eqref{continuous:problem:weak}, and we discuss $\h$- and $\p$-approximation properties of the functions in such spaces.

More precisely, after having introduced the concept of regular polygonal decompositions in Section~\ref{subsection:regular:polygonal:meshes}, in Section~\ref{subsection:virtual:element:spaces} we construct the virtual element spaces
and we describe their $\h$- and $\p$-approximation properties; the definition of the discrete bilinear forms is instead the topic of Section~\ref{subsection:discret:bilinear:forms},
where a particular emphasis is put on the analysis of the stabilizations with explicit bounds in terms of the mesh size~$\h$ and the ``polynomial'' degree~$\p$.
Finally, Section~\ref{subsection:the:method} is devoted to state the method.

\subsection{Polygonal meshes} \label{subsection:regular:polygonal:meshes}
Given $\Omega \subset \mathbb R^2$, we introduce here the concept of regular polygonal decompositions and some useful notation, instrumental for the description of the method.

Let~$\{ \taun\}_{n\in \mathbb N}$ be a sequence of conforming polygonal decompositions of~$\Omega$, i.e., for all $n \in \mathbb N$, $\taun$ is a collection of polygons,
such that the intersection of two different polygons is either the empty set, a vertex, or a collection of edges.

We set for future convenience~$\En$, $\Enb$, and~$\EnI$ the set of edges, of boundary edges, and internal edges of~$\taun$, respectively;
moreover, we set~$\EE$ the set of edges of $\E$ and~$\nE$ its cardinality.
The diameter of the elements~$\E \in \taun$, the mesh size of~$\taun$, and the length of the edges~$\e\in \En$, are denoted by
\[
\hE = \text{diam}(\E),\quad  \h = \max_{\E \in \taun} \hE,\quad \he = \text{length} (\e),
\]
respectively.

In the forthcoming analysis, we will make use of the two following assumptions.
For all $n \in \mathbb N$, there exists a positive constant~$\gamma$ independent of the mesh size such that
\begin{itemize}
\item [(\textbf{D1})] for all $\E\in \taun$ and for all $\e \in \EE$, it holds that~$\he$ is larger than or equal to~$\gamma \hE$;
\item [(\textbf{D2})] every $\E \in \taun$ is star-shaped with respect to at least one ball, with radius larger than or equal to~$\gamma \hE$.
\end{itemize}

We underline that the assumptions (\textbf{D1}) and (\textbf{D2})
could be in principle weakened, yet retaining analogous approximation properties of the method, as discussed in~\cite{beiraolovadinarusso_stabilityVEM, chen_anisotropic_conforming, brennerVEMsmall}.
\medskip

Given~$\taun$ a polygonal decomposition of~$\Omega$, we define the broken Sobolev seminorm
\begin{equation}\label{broken Sobolev}
\vert \cdot \vert^2_{1,\taun} = \sum_{\E \in \taun} \vert \cdot \vert^2_{1,\E}.
\end{equation}
Furthermore, we introduce two families of operators
\begin{align}
&\Pi_\ell^{0, \Omegatilde} : L^2(\Omegatilde) \rightarrow \mathbb P_\ell(\Omegatilde), \quad (\Pi_\ell^{0, \Omegatilde} u - u, q_\ell)_{0,\Omegatilde}=0 \quad \forall u \in L^2(\Omegatilde),\; \forall q_\ell \in \mathbb P_\ell(\Omegatilde),    \label{full:projector:a} \\
& \Pi_\ell^{\nabla,\Omegatilde} : H^1(\Omegatilde) \rightarrow \mathbb P_{\ell}(\Omegatilde), \quad
\begin{cases}
(\nabla \Pi_\ell^{\nabla,\Omegatilde} u - \nabla u, \nabla q_\ell)_{0,\Omegatilde}=0 \\
\int_{\partial \Omegatilde} u - \Pi_\ell^{\nabla,\Omegatilde} u = 0\\
\end{cases}
\quad \forall u \in H^1(\Omegatilde),  \;\forall q_\ell \in \mathbb P_\ell(\Omegatilde),    \label{full:projector:c} 
\end{align}
for some measurable set~$\Omegatilde\subseteq \mathbb R^2$ and for some~$\ell \in \mathbb N$.
In the following, we will denote by~$\Pi_\ell^{0, \Omegatilde}$ also the vector version of the~$L^2$ projector defined in~\eqref{full:projector:a}.
\medskip

We also consider the following simplifying assumption:
\begin{itemize}
\item [(\textbf{A})] the coefficients~$\K$ and~$\V$ in~\eqref{continuous:problem:strong} are piecewise analytic over~$\taun$, for all $n\in \mathbb N$.
\end{itemize}

\subsection{Virtual element spaces} \label{subsection:virtual:element:spaces}
In the present section, we introduce the local and global virtual element spaces for the problem~\eqref{continuous:problem:weak} and
we study their~$\h$- and $\p$-approximation properties.

Given an element $\E \in \taun$ and the ``polynomial'' degree~$\p\in \mathbb N$, we define the auxiliary space
\begin{equation*}
\VtildenE = \{  \vn \in \mathcal C^0(\overline\E) \mid \Delta \vn \in \mathbb P_{\p-1} (\E),\; \vn{}_{|\e} \in \mathbb P_{\p}(\e) \, \forall \e\in \EE  \}.
\end{equation*}
The local virtual element space over the element~$\E$ reads
\begin{equation} \label{local:enhanced:space}
\VnE = \left\{ \vn \in \VtildenE \mid \int_\E (\vn - \Pi^{\nabla, \E}_\p \vn) \malpha = 0 \quad \forall \malpha \in \mathbb P_ {\p-1}(\E) \setminus \mathbb P_{\p-2}(\E) \right\}.
\end{equation}
The local space~$\VnE$ has been constructed in the spirit of the enhanced virtual element space, see~\cite{equivalentprojectorsforVEM}.
It is essential to underline that the local virtual element space~$\VnE$ contains the space of polynomials of degree smaller than or equal to~$\p$.
This fact guarantees that $\VnE$ has good approximation properties.

We recall from~\cite{VEMvolley, equivalentprojectorsforVEM}, that $\VnE$can be endowed with the following set of unisolvent degrees of freedom.
Given~$\vn \in \VnE$,
\begin{itemize}
\item for all the vertices~$\{\nu_i\}_{i=1}^{\nE}$ of~$\E$, the point-values~$\vn(\nu_i)$, for all $i=1,\dots, \nE$;
\item for all edges~$\e \in \EE$, the point-values at~$\p-1$ distinct internal points of~$\e$ (e.g. at the~$\p-1$ internal Gau\ss-Lobatto nodes);
\item given $\{\malpha\}_{\alpha=1}^{\pi_{\p-2}}$, where we recall that~$\pi_{\p-2}$ denotes the dimension of~$\mathbb P_{\p-2}(\E)$, \emph{any} basis of~$\mathbb P_{\p-2}(\E)$ invariant under homothetic transformation\, the scaled moments
\begin{equation} \label{internal:moments}
\frac{1}{\vert \E \vert} \int_\E \vn \malpha.
\end{equation}
\end{itemize}
In the original VEM approach~\cite{VEMvolley}, as well as in the majority of the literature, the basis $\{\malpha\}_{\alpha=1}^{\pi_{\p-2}}$ is chosen as the basis of (scaled and shifted with the barycenter of the element) monomials.
However, in presence of very distorted elements or for a high ``polynomial'' degree~$\p$, this choice may result in a loss of accuracy in the method;
alternative choices tackling the high-order case are available in literature~\cite{fetishVEM, fetishVEM3D}, and will be pinpointed in Section~\ref{section:numerical:results}.

Having at disposal the set of local unisolvent degrees of freedom $\{ \dof_j \}_{j=1}^{\dim(\VnE)}$, we introduce the canonical basis $\{\dof_j\}_{j=1}^{\dim(\VnE)}$ as $\dof_j(\varphi_\ell) = \delta_{j,\ell}$, where~$\delta_{j,\ell}$ denotes the Kronecker delta.

Importantly, the functions in local virtual element spaces, as they are solutions to local Poisson problems, are known explicitly only at the boundary of the element, but are unknown in closed form at the interior.
Therefore, the exact bilinear forms are not computable; rather, they have to be replaced by proper discrete counterparts avoiding the evaluation of trial and test functions at the integration points, see Section~\ref{section:convergence:analysis}.

We also highlight that the choice of the degrees of freedom allows to compute \emph{explicitly} the following quantities, see e.g.~\cite{bbmr_VEM_generalsecondorderelliptic}:
\[
\Pi^{0,\E}_{\p-1} \un, \quad \quad \Pi^{\nabla,\E}_\p \un,\quad \quad \Pi^{0,\E}_{\p-1} (\nabla \un) \quad \quad \forall \un \in \VnE,
\]
where the projectors~$\Pi^{0,\E}_{\p-1}$ and~$\Pi^{\nabla,\E}_\p$ are defined in~\eqref{full:projector:a} and~\eqref{full:projector:c}, respectively, and where, with a slight abuse of notation, $\Pi^{0,\E}_{\p-1} (\nabla \un)$ is the vector counterpart
of the~$L^2$ projector in~\eqref{full:projector:a}.
For the sake of clarity, we will drop the superscript~$\E$.
\medskip

The global space is built in an $H^1$-conforming fashion:
\[
\Vn = \{  \vn \in C^0(\overline \Omega) \cap H^1_0 (\Omega) \mid \vn{}_{|\E} \in \VnE \text{ for all } \E \in \taun     \}.
\]
The set of global degrees of freedom is obtained by a standard coupling of the local ones.

We underline that it is also possible to build nonconforming spaces (\`a la Crouzeix-Raviart), see e.g.~\cite{nonconformingVEMbasic, gardini2018nonconforming}.
At any rate, we stick here to the~$H^1$-conforming case.

The remainder of the section is devoted to prove some best approximation results in polynomial and virtual element spaces.
We begin by recalling from~\cite[Lemma 4.2]{hpVEMbasic} the following $\h\p$-best polynomial approximation results over shape regular polygons.
\begin{thm}[$\h\p$-best polynomial approximation error over polygons] \label{theorem:best:polynomial:error}
Given $\E\in\taun$ and $u \in H^{s+1}(\E)$, $s\ge0$, for all $\p \in \mathbb N$, there exists $\upi \in \mathbb P_\p(\E)$ such that
\begin{equation} \label{cazzo}
\vert u - \upi \vert_{\ell,\E} \lesssim  \frac{\hE^{\min(\p,s)+1-\ell}}{\p^{s+1-\ell}} \Vert u \Vert_{s+1,\E} \quad \forall \ell \ge 0 \quad \text{such that} \quad 0\le \ell \le s.
\end{equation}
\end{thm}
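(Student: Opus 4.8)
The plan is to reduce the estimate on the polygon~$\E$ to a classical tensor--product $\h\p$-polynomial approximation on an axis--parallel bounding box, the reduction being made possible by the star--shapedness assumption~(\textbf{D2}). First I would fix a square $Q \supset \E$ with $\text{diam}(Q) \approx \hE$ and invoke a Stein--type Sobolev extension operator $\mathscr E : H^{s+1}(\E) \to H^{s+1}(Q)$ with $\Vert \mathscr E u \Vert_{s+1,Q} \lesssim \Vert u \Vert_{s+1,\E}$; the crucial point is that~(\textbf{D2}) forces~$\E$ to have a Lipschitz boundary whose Lipschitz character is controlled solely by~$\gamma$, so that, after the usual dilation to unit diameter, the extension constant is independent of~$\hE$ and, of course, of~$\p$. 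This turns the problem into approximating the smooth extension $\mathscr E u$ on~$Q$ by a polynomial of degree~$\p$ and restricting back, the restriction of a polynomial being again a polynomial of the same degree, so that $\upi{}_{|\E} \in \mathbb P_\p(\E)$ and $|u - \upi|_{\ell,\E} \le |\mathscr E u - \upi|_{\ell,Q}$. For non--integer~$s$ I would reach the fractional statement by space interpolation between consecutive integer orders.

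The core of the argument is the one--dimensional estimate. On an interval of length $\approx \hE$ one constructs, by truncating the Legendre (more generally Jacobi) expansion, a projector onto degree--$\p$ polynomials obeying a bound of the form $\hE^{\min(\p,s)+1-\ell}\,\p^{-(s+1-\ell)}$ times the pertinent one--dimensional Sobolev norm; this is the classical building block of the $\h\p$-FEM theory (Babu\v{s}ka--Suri, Schwab). The mixed exponent is precisely where the two mechanisms meet: the factor $\hE^{\min(\p,s)+1}$ records the algebraic Bramble--Hilbert rate, which \emph{saturates} once the requested regularity $s+1$ exceeds the available degree $\p+1$, whereas the factor $\p^{-(s+1-\ell)}$ records the spectral decay of the orthogonal--expansion coefficients, which keeps improving as long as~$u$ carries extra Sobolev smoothness. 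I would then tensorize this one--dimensional projector over the Cartesian product~$Q$ to obtain $\upi \in \mathbb P_\p(Q)$ with $|\mathscr E u - \upi|_{\ell,Q} \lesssim \hE^{\min(\p,s)+1-\ell}\,\p^{-(s+1-\ell)}\Vert \mathscr E u\Vert_{s+1,Q}$ for $0\le\ell\le s$.

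Assembling the pieces, I would chain the restriction inequality, the tensor--product estimate on~$Q$, and the stability of the extension to arrive at
\[
|u - \upi|_{\ell,\E} \le |\mathscr E u - \upi|_{\ell,Q} \lesssim \frac{\hE^{\min(\p,s)+1-\ell}}{\p^{s+1-\ell}} \Vert \mathscr E u\Vert_{s+1,Q} \lesssim \frac{\hE^{\min(\p,s)+1-\ell}}{\p^{s+1-\ell}} \Vert u\Vert_{s+1,\E},
\]
which is the asserted bound, with all hidden constants depending only on~$\gamma$ and on~$s$ but not on~$\hE$ nor on~$\p$. The cleanest bookkeeping of the powers of~$\hE$ is obtained by dilating~$\E$ to unit diameter first, proving the pure $\p$-estimate on the resulting shape--regular reference polygon, and then scaling back: the powers of~$\hE$ fall out of the homogeneity $|u|_{m,\E} = \hE^{1-m}|\widehat u|_{m,\widehat \E}$ of the seminorms, while the $\min(\p,s)$ is inherited from the reference estimate.

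The step I expect to be the main obstacle is keeping every constant \emph{genuinely} independent of~$\p$. The Sobolev extension is harmless in~$\p$, but its norm must be tracked carefully through the dilation so that only~$\gamma$ survives. More delicate is the exact $\p$-dependence of the Jacobi truncation error, where the naive bounds carry ratios of factorials that must be shown to behave like $\p^{-(s+1-\ell)}$ uniformly; this, together with correctly matching the saturation of the $\hE$-exponent to the spectral exponent so that the factor $\min(\p,s)$ emerges, is the technical heart of the estimate. The passage from the box back to~$\E$ is, by contrast, routine, since the $H^\ell(\E)$-seminorm is dominated by the $H^\ell(Q)$-seminorm.
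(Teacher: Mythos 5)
Your plan is essentially the proof behind this statement: the paper does not prove the theorem itself but recalls it from~[Lemma 4.2]{hpVEMbasic}, whose argument is exactly your reduction --- scale~$\E$ to unit diameter, use a Stein extension (with constant controlled by the shape-regularity parameter~$\gamma$ via (\textbf{D1})--(\textbf{D2})) to a bounding square, apply the classical Babu\v{s}ka--Suri $\h\p$-approximation result on the square (which handles the Legendre/Jacobi truncation technicalities and the fractional orders by operator interpolation, precisely the points you flag as delicate), and restrict back. The only caution is that a plain truncated Legendre $L^2$-expansion is suboptimal in higher seminorms, so one should invoke the Babu\v{s}ka--Suri projector rather than re-derive it, as you implicitly do by citing that building block.
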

Next, we prove an auxiliary result which will be instrumental for proving an $\h\p$-best interpolation result in enhanced virtual element spaces.
To this aim, we first introduce~$\tautilden$, a subtriangulation of $\Omega$ obtained as follows.
For every $\E\in\taun$, we connect its vertices to the center of any ball (with maximal radius) with respect to which~$\E$ is star-shaped, see the geometric assumption (\textbf{D2}).
The union of such triangles is denoted by~$\tautilden$.
Associated with the subtriangulation~$\tautilden$, we define the broken Sobolev seminorm $\vert \cdot \vert_{1,\tautilden}^2$ as in~\eqref{broken Sobolev}.
\begin{thm} \label{theorem:best:interpolation:error:auxiliary}
For every $u\in H^1(\Omega)$, there exists $\uI\in \Vn$ such that
\begin{equation} \label{best:interpolation:error}
\vert u - \uI \vert_{1,\Omega} \lesssim \p (\vert u - \upi \vert_{1,\taun} + \vert u - \utildepi \vert_{1,\tautilden})
\end{equation}
for all~$\upi$ and~$\phin$ piecewise continuous polynomials of degree $\p$ over $\taun$ and $\tautilden$, respectively.
\end{thm}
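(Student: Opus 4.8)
The plan is to construct the interpolant $\uI$ explicitly in the virtual element space and then estimate the interpolation error by comparing $u$ to $\uI$ through the polynomial approximants $\upi$ and $\phin$. The natural candidate for $\uI$ is the function in $\Vn$ whose degrees of freedom coincide with those of $u$: that is, $\uI$ interpolates the vertex values and edge point-values of $u$, and matches its scaled internal moments against $\mathbb P_{\p-2}(\E)$. Since these degrees of freedom are unisolvent on $\VnE$, this defines $\uI$ uniquely on each element, and because the boundary degrees of freedom are shared across elements the resulting function lies in the global conforming space $\Vn$.

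First I would reduce the global estimate to a sum of local ones via the broken seminorm, writing $\vert u - \uI \vert_{1,\Omega}^2 = \sum_{\E} \vert u - \uI \vert_{1,\E}^2$, and then work element by element. On each $\E$, I would insert the best polynomial approximant by a triangle inequality, $\vert u - \uI \vert_{1,\E} \le \vert u - \upi \vert_{1,\E} + \vert \upi - \uI \vert_{1,\E}$, so the task becomes bounding the second term, where both $\upi$ and $\uI$ are explicitly available. The key observation is that $\upi - \uI \in \VtildenE$ and its trace on $\partial \E$ and its Laplacian are controlled; the plan would be to bound its $H^1(\E)$ seminorm by a boundary (trace) contribution plus an interior (moment) contribution. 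The standard route, following the enhanced-VEM interpolation literature, is to use a stability estimate for the virtual interpolation operator, splitting the $H^1$-control into an estimate on the edges of $\E$ (controlled by $\vert u - \upi\vert_{1,\E}$ through a trace/inverse inequality on edges) and an interior estimate that invokes the subtriangulation $\tautilden$, where $\phin$ enters. This is precisely why the right-hand side of~\eqref{best:interpolation:error} carries both a $\taun$-term and a $\tautilden$-term.

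The factor of $\p$ in the estimate is the crucial $\p$-explicit ingredient, and I expect the heart of the argument to be a polynomial inverse inequality on the mesh edges together with a $\p$-explicit trace/lifting estimate. Concretely, the difference $\upi - \uI$ restricted to an edge $\e$ is a polynomial of degree $\p$ interpolating $u - \upi$ at the Gau\ss{}-Lobatto nodes; controlling the $H^{1/2}(\partial \E)$-norm of this edge polynomial in terms of the data incurs a factor growing like $\p$, which is the source of the linear loss. I would therefore need a $\p$-robust bound for the Gau\ss{}-Lobatto nodal interpolation operator on $[-1,1]$ (its operator norm grows polynomially in $\p$), combined with a harmonic-extension-type argument to pass from boundary control to the full $H^1(\E)$ seminorm. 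The interior moment degrees of freedom are handled by bounding $\int_\E (u - \upi) \malpha$ via Cauchy-Schwarz and absorbing it into the $\tautilden$-term through the star-shapedness assumption~(\textbf{D2}).

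The main obstacle, and the step requiring the most care, is the $\p$-explicit control of the trace of the virtual interpolant on the element boundary: one must show that the loss is exactly linear in $\p$ and no worse, which hinges on a sharp tracking of constants in the edgewise nodal interpolation and in the polynomial lifting from $\partial \E$ into $\E$. Getting this dependence right — rather than, say, a spurious $\p^2$ — is the delicate point, and it is where the geometric assumptions (\textbf{D1}) and (\textbf{D2}) on edge lengths and star-shapedness must be used to ensure all hidden constants are independent of both $\h$ and $\p$. Once the boundary estimate is secured, assembling the local bounds and summing over $\E \in \taun$ to recover the global seminorm is routine.
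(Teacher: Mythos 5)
There is a genuine gap, and it appears at the very first step: your interpolant is ill-defined. For $u\in H^1(\Omega)$ in two dimensions there is no embedding into $C^0$, so the vertex values and edge point-values of $u$ --- and hence your candidate $\uI$ defined by ``matching the degrees of freedom of $u$'' --- do not make sense; likewise, your later step of interpolating $u-\upi$ at the Gau\ss--Lobatto nodes requires pointwise trace values that $H^1$ functions do not possess. The paper avoids this entirely: it never takes degrees of freedom of $u$. Instead it first builds an auxiliary function $\vI$, elementwise in $\VtildenE$, as the solution of $-\Delta \vI = -\Delta\upi$ in $\E$ with polynomial boundary datum $\vI=\phin$ on $\partial\E$, for which the $\p$-robust energy estimate~\eqref{Chileans:estimate} holds, and only then defines $\uI\in\Vn$ by $\dof_i(\uI-\vI)=0$ --- interpolation of the continuous function $\vI$, not of $u$. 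This also corrects your reading of why $\tautilden$ appears: the subtriangulation term enters through the boundary datum $\phin$ of the auxiliary Dirichlet problem, not through an absorption of the interior moments via star-shapedness.

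The second problem is that you have misidentified the source of the factor $\p$. Since $\uI$ and $\vI$ share all boundary degrees of freedom and both have edgewise polynomial traces of degree $\p$, their traces on $\partial\E$ coincide \emph{exactly}, so $\uI-\vI\in H^1_0(\E)$ and no Gau\ss--Lobatto Lebesgue-constant or trace/lifting loss occurs anywhere in the argument. The linear loss is an \emph{interior} phenomenon caused by the enhancing constraints in~\eqref{local:enhanced:space}, which your proposal never engages with: setting $\qpmo=\Delta(\uI-\vI)$, the constraints together with $\Pinabla\uI=\Pinabla\vI$ yield
\begin{equation*}
\vert \uI-\vI\vert_{1,\E}^2=\int_\E \qpmo\,(I-\Pizpmt)(\vI-\Pinabla\vI),
\end{equation*}
where the projector $(I-\Pizpmt)$ gains a factor $\hE\,\p^{-1}$ by Theorem~\ref{theorem:best:polynomial:error}, while the polynomial inverse estimate~\eqref{p:inverse:estimate:on:polygon} costs $\p^2\hE^{-1}$, netting exactly one power of $\p$. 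This is precisely the suboptimality of the \emph{enhanced} spaces flagged after Corollary~\ref{corollary:best:interpolation:error}; a boundary-driven argument of the kind you sketch, even if repaired for regularity, would at best address the standard virtual element spaces and would miss the step where the enhanced-space constraints must be handled.
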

\begin{proof}
The proof employs some tools from~\cite[Lemma 4.3]{hpVEMbasic}, ~\cite[Theorem 2]{hpVEMcorner}, and~\cite[Theorem 11]{cangianigeorgulispryersutton_VEMaposteriori}.
We assume without loss of generality that~$\p > 2$, since we are interested only in the asymptotic behavior of~$\p$.

Given~$u \in H^1(\Omega)$, we start by defining the auxiliary interpolant~$\vI\in H^1(\Omega)$ whose restriction on~$\E$, for all~$\E \in \taun$, belongs to the space~$\VtildenE$, as
\[
\begin{cases}
-\Delta \vI = -\Delta \upi & \text{in }\E\\
\vI=\phin & \text{on } \partial \E,\\
\end{cases}
\]
for some~$\upi$ and~$\phin$ piecewise continuous polynomials of degree~$\p$ over~$\taun$ and~$\tautilden$, respectively.

Following~\cite[Lemma 4.3]{hpVEMbasic} or \cite[Proposition 4.2]{VEMchileans}, one shows
\begin{equation}\label{Chileans:estimate}
\vert u - \vI \vert_{1,\Omega} \le \vert u - \phin \vert_{1,\tautilden} + 2\vert u - \upi \vert_{1,\taun}.
\end{equation}
Next, we introduce an interpolant~$\uI$ in the space~$\Vn$, defined as
\begin{equation} \label{equal:dofs}
\dof_i (\uI - \vI) = 0\quad \quad \forall i=1,\dots,\dim(\VnE),\quad \forall \E \in \taun
\end{equation}
where we recall that $\{\dof_i\}_{i=1}^{\dim(\VnE)}$ is the set of degrees of freedom of $\VnE$.
It can be proven that~\eqref{equal:dofs} implies
\begin{equation} \label{15}
\Pinabla \uI = \Pinabla \vI \quad \text{in~$\E \in \taun$.}
\end{equation}
Setting $\qpmo: =\Delta (\uI-\vI) \in \mathbb P_{\p-1}(\E)$, an integration by parts, together with the definitions of~$\uI$ and of~$\vI$ and~\eqref{15}, yields
\begin{equation} \label{estimate:uI-vI}
\begin{split}
\vert \uI - \vI \vert^2_{1,\E}		& \overset{\eqref{equal:dofs}}{=} \int_\E - \qpmo (\uI-\vI) \overset{\eqref{equal:dofs}}{=} \int_\E (I-\Pizpmt)\qpmo \,(\vI-\uI) \\
							& \overset{\eqref{local:enhanced:space}}{=} \int_\E  (I-\Pizpmt) \qpmo \,(\vI-\Pinabla \uI) \overset{\eqref{15}}{=} \int_\E  (I-\Pizpmt) \qpmo \,(\vI-\Pinabla\vI)\\
							& = \int_\E  \qpmo \,(I-\Pizpmt) (\vI - \Pinabla\vI) \le \Vert \qpmo \Vert_{0,\E} \Vert (I-\Pizpmt)(\vn - \Pinabla \vn ) \Vert_{0,\E}\\
							& \overset{\eqref{cazzo}}{\lesssim} \Vert \qpmo \Vert_{0,\E}  \hE (\p-2)^{-1} \Vert \vn - \Pinabla \vn \Vert_{1,\E} \lesssim \hE \p^{-1} \Vert \qpmo \Vert_{0,\E}   \Vert \vn - \Pinabla \vn \Vert_{1,\E},
\end{split}
\end{equation}
where the last but one inequality follows from the properties of the~$L^2$ projector, the fact that we are assuming~$\p > 2$, and Theorem~\ref{theorem:best:polynomial:error}.

Next, we recall the $\p$-polynomial inverse estimate \cite[equation (33)]{hpVEMcorner}
\begin{equation} \label{p:inverse:estimate:on:polygon}
\Vert \qpmo \Vert_{0,\E} = \Vert \Delta (\uI - \vI) \Vert_{0,\E} \lesssim \frac{\p^2}{\hE} \vert \uI - \vI \vert_{1,\E}.
\end{equation}
Combining~\eqref{estimate:uI-vI} and~\eqref{p:inverse:estimate:on:polygon}, we deduce that
\[
\vert \uI - \vI \vert^2_{1,\E} \lesssim \p \vert \uI - \vI \vert_{1,\E} \Vert \vI-\Pinabla\vI \Vert_{1,\E}.
\]
This, together with a Poincar\'e inequality (which applies since $\vI - \Pinabla \vI$ has zero average on~$\partial \E$) and the properties of the projector~$\Pinabla$ (which is the best approximation in~$H^1$), entails
\begin{equation} \label{Masco:estimate}
\begin{split}
\vert \uI - \vI \vert_{1,\E} 	& \lesssim \p \Vert \vI-\Pinabla\vI \Vert_{1,\E}  \lesssim \p \vert\vI - \Pinabla \vI \vert_{1,\E} \le \p \vert \vI - \upi\vert_{1,\E}\\
					& \le \p \left( \vert u -\vI \vert_{1,\E}    + \vert u - \upi\vert_{1,\E} \right).
\end{split}
\end{equation}
Hence, a triangle inequality, together with~\eqref{Chileans:estimate} and~\eqref{Masco:estimate}, leads to
\[
\vert u - \uI \vert_{1,\E} \le \vert u - \vI \vert_{1,\E} + \vert \uI - \vI \vert_{1,\E} \lesssim \p (\vert u - \vI \vert_{1,\E} + \vert u - \upi \vert_{1,\E}) \lesssim \p (\vert u - \phin \vert_{1,\E} + \vert u - \upi \vert_{1,\E}),
\]
which is the claim.
\end{proof}
We have now all the tools so as to prove an $\h\p$-best interpolation error result by means of functions in virtual element spaces.
\begin{cor}[$\h\p$-best interpolation error in virtual element spaces] \label{corollary:best:interpolation:error}
Given~$u \in H^1_0(\Omega)$ with~$u_{|\E}\in H^{s+1}(\E)$ for all~$\E \in \taun$ and for some~$s \ge 1$, there exists~$\uI \in \Vn$ such that
\[
\vert u - \uI \vert_{1,\Omega} \lesssim \frac{\h^{\min(\p,s)}}{\p^{s-1}} \left(  \sum_{\E \in \taun} \Vert u \Vert^2_{s+1,\E}   \right)^{\frac{1}{2}}.
\]
\end{cor}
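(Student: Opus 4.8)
The plan is to combine the two approximation results already established. Theorem~\ref{theorem:best:interpolation:error:auxiliary} reduces the construction of a virtual element interpolant to the existence of good piecewise polynomial approximants over~$\taun$ and~$\tautilden$, and Theorem~\ref{theorem:best:polynomial:error} supplies exactly such approximants with explicit $\h\p$-rates. Accordingly, I would first invoke Theorem~\ref{theorem:best:interpolation:error:auxiliary} to produce $\uI \in \Vn$ with
\[
\vert u - \uI \vert_{1,\Omega} \lesssim \p \left( \vert u - \upi \vert_{1,\taun} + \vert u - \utildepi \vert_{1,\tautilden} \right),
\]
and then choose~$\upi$ and~$\utildepi$ to be the elementwise (respectively, trianglewise) best polynomial approximants of degree~$\p$ delivered by Theorem~\ref{theorem:best:polynomial:error}.

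For the first term, on each $\E \in \taun$ I would apply Theorem~\ref{theorem:best:polynomial:error} with $\ell = 1$, which is legitimate since $s \ge 1$, obtaining $\upi$ with $\vert u - \upi \vert_{1,\E} \lesssim \hE^{\min(\p,s)} \p^{-s} \Vert u \Vert_{s+1,\E}$. Squaring, bounding $\hE \le \h$, and summing over $\E \in \taun$ then yields
\[
\vert u - \upi \vert_{1,\taun} \lesssim \frac{\h^{\min(\p,s)}}{\p^{s}} \left( \sum_{\E \in \taun} \Vert u \Vert^2_{s+1,\E} \right)^{\frac{1}{2}}.
\]

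The second term is where care is needed, and I expect it to be the main obstacle. First I must check that Theorem~\ref{theorem:best:polynomial:error} applies on the triangles $T \in \tautilden$: these are genuine polygons, and the assumptions (\textbf{D1}) and (\textbf{D2}) guarantee that $\tautilden$ is a shape-regular triangulation with $h_T \lesssim \hE \le \h$ for $T \subseteq \E$, so the constant in Theorem~\ref{theorem:best:polynomial:error} stays uniform. Applying it trianglewise with $\ell = 1$ gives $\utildepi$ with $\vert u - \utildepi \vert_{1,T} \lesssim h_T^{\min(\p,s)} \p^{-s} \Vert u \Vert_{s+1,T}$. Squaring and summing, the key bookkeeping point is that the triangles of~$\tautilden$ contained in a fixed~$\E$ partition it, so for integer~$s$ the additivity of the Sobolev norm gives $\sum_{T \subseteq \E} \Vert u \Vert^2_{s+1,T} = \Vert u \Vert^2_{s+1,\E}$ (and $\lesssim$ suffices in the fractional case); hence $\vert u - \utildepi \vert_{1,\tautilden}$ obeys the same bound as $\vert u - \upi \vert_{1,\taun}$.

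Inserting both bounds into the estimate from Theorem~\ref{theorem:best:interpolation:error:auxiliary}, the explicit factor~$\p$ absorbs one power of~$\p$ in the denominator, turning $\p^{-s}$ into $\p^{-(s-1)}$ and producing exactly the claimed rate. Beyond the verification that the subtriangulation inherits shape-regularity from (\textbf{D1})--(\textbf{D2}) and the control of the local triangle norms by those over the parent elements, everything else reduces to a direct substitution.
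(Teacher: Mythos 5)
There is a genuine gap, and it sits exactly where you anticipated trouble: the choice of~$\utildepi$ on the subtriangulation. In Theorem~\ref{theorem:best:interpolation:error:auxiliary}, the function~$\phin$ is not a free elementwise polynomial approximant: it serves as the Dirichlet boundary datum of the auxiliary interpolant~$\vI$ (recall $\vI = \phin$ on $\partial\E$ in the proof), so it must be a \emph{globally continuous} piecewise polynomial over~$\tautilden$. Otherwise $\vI$ fails to lie in $\mathcal C^0(\overline\E)$ with edgewise polynomial traces, i.e.\ it does not belong to~$\VtildenE$, and its traces from neighboring polygons do not match, so the construction does not produce an $H^1(\Omega)$-conforming function at all. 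Your proposal takes~$\utildepi$ to be the trianglewise best approximant from Theorem~\ref{theorem:best:polynomial:error}, applied independently on each $T \in \tautilden$; such a function is in general discontinuous across the internal triangle edges and across the skeleton of~$\taun$, hence inadmissible in Theorem~\ref{theorem:best:interpolation:error:auxiliary}. Restoring continuity after the fact (say, by vertex averaging) is not free in the $\h\p$-setting: done naively it costs additional powers of~$\p$, which would spoil the rate $\h^{\min(\p,s)}/\p^{s-1}$.

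This is precisely why the paper's proof invokes \cite[Theorem 4.6]{babuskasurihpversionFEMwithquasiuniformmesh} for the term over~$\tautilden$: Babu\v{s}ka--Suri provides a \emph{continuous} piecewise polynomial of degree~$\p$ on a shape-regular triangulation with the same rate $\h^{\min(\p,s)}\p^{-s}\Vert u \Vert_{s+1}$ in the broken $H^1$-seminorm, which is then a legitimate choice of~$\phin$. The rest of your argument is sound and matches the paper: for the term over~$\taun$, the polynomial~$\upi$ enters Theorem~\ref{theorem:best:interpolation:error:auxiliary} only elementwise (through $-\Delta\upi$ and local best-approximation comparisons), so no interelement continuity is needed and Theorem~\ref{theorem:best:polynomial:error} applies with $\ell=1$ exactly as you use it; your observations that (\textbf{D1})--(\textbf{D2}) make~$\tautilden$ shape-regular with $h_T \lesssim \hE$, that the squared Sobolev norms are (super)additive over the triangles of each~$\E$, and that the explicit factor~$\p$ in~\eqref{best:interpolation:error} converts $\p^{-s}$ into $\p^{-(s-1)}$, are all correct and are the remaining ingredients of the paper's two-line proof.
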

\begin{proof}
The assertion follows from Theorem~\ref{theorem:best:interpolation:error:auxiliary}, applying \cite[Theorem 4.6]{babuskasurihpversionFEMwithquasiuniformmesh} and Theorem~\ref{theorem:best:polynomial:error} 
to the first and second terms term on the right-hand side of~\eqref{best:interpolation:error}, respectively.
\end{proof}

We point out that the best interpolation error proven in Corollary~\ref{corollary:best:interpolation:error} is suboptimal of one power of~$\p$ with respect to its counterpart in standard VE spaces, see~\cite[Lemma 4.3]{hpVEMbasic}.
As a consequence, it will turn out that performing a pure $\p$-version of the method on a test case with a finite Sobolev regularity solution could lead to a suboptimal rate of convergence.

Notwithstanding, assuming that the target function~$u$ is analytic, the rate of convergence of the $\p$-version of a Galerkin method (such as FEM~\cite{SchwabpandhpFEM} and VEM~\cite{hpVEMbasic}) is typically exponential in terms of the polynomial degree~$\p$;
therefore, the suboptimal polluting factor~$\p$ can be absorbed in the exponential term; see the forthcoming Theorem~\ref{theorem:p:exponential:convergence} for a more precise statement.
In case instead one considers a test case with exact solution having finite Sobolev regularity, one may proceed with $\h\p$-refinement techniques, which lead in any case to exponential convergence, this time in terms of the cubic root of the number of degrees of freedom.
This procedure will be numerically investigated in Section~\ref{subsection:hp-version}.

\subsection{Discrete bilinear forms} \label{subsection:discret:bilinear:forms}
Having recalled that the functions in virtual element spaces are unknown in closed form and therefore, \emph{rebus sic stantibus}, it is not possible to implement the method,
the aim of the present section is to define discrete bilinear forms that are computable via the degrees of freedom of the space.

We begin with the discrete counterpart of the bilinear form~$\a(\cdot, \cdot)$, which is constructed in the spirit of~\cite{bbmr_VEM_generalsecondorderelliptic}.
We first define the local discrete bilinear forms. For all~$\E \in \taun$,
\begin{equation} \label{an}
\anE(\un,\vn) = \sum_{\E \in \taun}  (\K \Pizpmo \nabla \un, \Pizpmo \nabla \vn) + \SEo( (I-\Pinabla)\un, (I-\Pinabla)\vn)  \quad \forall \un,\,\vn \in H^1(\E),\\
\end{equation}
where $\SEo: \ker(\Pinabla)^2\rightarrow \mathbb R$ is any bilinear form computable via the set of local degrees of freedom, satisfying
\begin{equation} \label{stabilization:SE1}
\begin{split}
&\alpha_*(\p) \vert\vn\vert_{1,\E}^2 \le \SEo(\vn,\vn)\quad \forall  \vn \in \VnE \text{ such that } \Pinabla \vn=0,\\
& \SEo(\vn, \vn) \le \alpha^*(\p) \vert \vn \vert_{1,\E}^2 \quad \forall \vn \in H^1(\E) \text{ such that } \Pinabla \vn=0,\\
\end{split}
\end{equation}
for some positive constants $\alpha_*(\p)$ and $\alpha^*(\p)$ independent of $\hE$ but not of $\p$. We recall that~$\Pizpmo \nabla \un$ is explicitly known.
The global discrete bilinear form is instead given by
\[
\an (\un, \vn) 	 = \sum_{\E\in \taun} \anE(\un,\vn) \quad \forall \un,\, \vn \in H^1(\Omega).
\]
The following result concerns the continuity and the coercivity of~$\anE$.
\begin{lem} \label{lemma:continuity:and:coercivity:an}
For all~$\E \in \taun$, the local discrete bilinear form~$\anE$ in~\eqref{an} satisfies the following bounds:
\begin{equation} \label{properties:an}
\begin{split}
& \min(\kcal _*, \alpha_*(\p)) \vert \vn \vert_{1,\E}^2 \le \anE (\vn, \vn) \quad \forall \vn \in \VnE,\\
& \anE (\vn, \vn) \le (\kcal^* + \alpha^*(\p)) \vert \vn \vert_{1,\E}^2 \quad \forall \vn \in H^1(\E),\\
\end{split}
\end{equation}
where we recall that~$\kcal_*$ and~$\kcal^*$ are introduced in~\eqref{property_K}, whereas~$\alpha_*(\p)$ and~$\alpha^*(\p)$ are defined in~\eqref{stabilization:SE1}.
\end{lem}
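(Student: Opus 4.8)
The plan is to treat the two summands of $\anE$ separately — the consistency term built on $\Pizpmo \nabla$ and the stabilization term $\SEo$ built on $(I-\Pinabla)$ — bounding the former through the spectral bounds~\eqref{property_K} on~$\K$ and the latter through~\eqref{stabilization:SE1}, and then to reconcile the two different projectors by means of the orthogonality properties of~$\Pizpmo$ and~$\Pinabla$. The decisive tool throughout will be that both $\Pizpmo$ (an $L^2$-orthogonal projection) and $\nabla\Pinabla$ (the $L^2$-orthogonal projection of $\nabla\vn$ onto $\nabla \mathbb P_\p(\E)$, by the first line of~\eqref{full:projector:c}) satisfy Pythagorean identities.

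For the continuity bound I would proceed as follows. Using the upper bound in~\eqref{property_K} and the fact that $\Pizpmo$ is an $L^2$-contraction, the consistency term satisfies $(\K \Pizpmo \nabla \vn, \Pizpmo \nabla \vn)_{0,\E} \le \kcal^* \Vert \Pizpmo \nabla \vn \Vert_{0,\E}^2 \le \kcal^* \vert \vn \vert_{1,\E}^2$. For the stabilization term I would first observe that, $\Pinabla$ being idempotent, $\Pinabla (I-\Pinabla)\vn = 0$, so the upper bound in~\eqref{stabilization:SE1} applies to $(I-\Pinabla)\vn$ and gives $\SEo((I-\Pinabla)\vn, (I-\Pinabla)\vn) \le \alpha^*(\p) \vert (I-\Pinabla)\vn \vert_{1,\E}^2$; the Pythagorean identity for $\Pinabla$ yields $\vert (I-\Pinabla)\vn \vert_{1,\E}^2 \le \vert \vn \vert_{1,\E}^2$. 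Summing the two contributions produces the constant $\kcal^* + \alpha^*(\p)$, valid for every $\vn \in H^1(\E)$.

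For the coercivity bound I would restrict to $\vn \in \VnE$. The lower bound in~\eqref{property_K} gives $(\K \Pizpmo \nabla \vn, \Pizpmo \nabla \vn)_{0,\E} \ge \kcal_* \Vert \Pizpmo \nabla \vn \Vert_{0,\E}^2$. Since $\Pinabla \vn \in \mathbb P_\p(\E) \subseteq \VnE$ (the space contains polynomials, see~\eqref{local:enhanced:space}), the function $(I-\Pinabla)\vn$ still lies in $\VnE$ and is annihilated by $\Pinabla$, so the lower bound in~\eqref{stabilization:SE1} applies and gives $\SEo((I-\Pinabla)\vn, (I-\Pinabla)\vn) \ge \alpha_*(\p) \vert (I-\Pinabla)\vn \vert_{1,\E}^2$. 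Factoring out $\min(\kcal_*, \alpha_*(\p))$, the proof reduces to the purely algebraic inequality $\Vert \Pizpmo \nabla \vn \Vert_{0,\E}^2 + \vert (I-\Pinabla)\vn \vert_{1,\E}^2 \ge \vert \vn \vert_{1,\E}^2$.

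The crux, and the step I expect to be the main obstacle, is exactly this last inequality, where the mismatch between the two projectors must be resolved. I would combine two facts. First, the Pythagorean identity for $\Pinabla$ gives $\vert \vn \vert_{1,\E}^2 = \Vert \nabla \Pinabla \vn \Vert_{0,\E}^2 + \vert (I-\Pinabla)\vn \vert_{1,\E}^2$. Second, since $\Pizpmo \nabla \vn$ is the best $L^2(\E)$ approximation of $\nabla \vn$ in the \emph{full} space $\mathbb P_{\p-1}(\E)^2$, which contains $\nabla \Pinabla \vn$ (the gradient of a degree-$\p$ polynomial, hence of degree $\p-1$), one has $\Vert \nabla \vn - \Pizpmo \nabla \vn \Vert_{0,\E} \le \Vert \nabla \vn - \nabla \Pinabla \vn \Vert_{0,\E}$; inserting this into the Pythagorean identity $\Vert \nabla \vn \Vert_{0,\E}^2 = \Vert \Pizpmo \nabla \vn \Vert_{0,\E}^2 + \Vert \nabla \vn - \Pizpmo \nabla \vn \Vert_{0,\E}^2$ yields the key comparison $\Vert \Pizpmo \nabla \vn \Vert_{0,\E} \ge \Vert \nabla \Pinabla \vn \Vert_{0,\E}$. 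Substituting the latter into the former closes the algebraic inequality, and hence the coercivity estimate with constant $\min(\kcal_*, \alpha_*(\p))$.
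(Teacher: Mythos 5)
Your proof is correct and takes essentially the same approach as the paper's: both split $\anE$ into the consistency and stabilization parts, bound them via~\eqref{property_K} and~\eqref{stabilization:SE1}, and reconcile the two projectors by noting that $\nabla \Pinabla \vn \in \mathbb P_{\p-1}(\E)^2$, so the best-approximation and Pythagorean properties of~$\Pizpmo$ apply. The only (immaterial) difference is in the coercivity step, where the paper bounds the stabilization term from below by $\Vert (I-\Pizpmo)\nabla \vn \Vert_{0,\E}^2$ and closes with the Pythagorean identity for~$\Pizpmo$, while you equivalently bound the consistency term via $\Vert \Pizpmo \nabla \vn \Vert_{0,\E} \ge \Vert \nabla \Pinabla \vn \Vert_{0,\E}$ and close with the Pythagorean identity for~$\Pinabla$.
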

\begin{proof}
We begin with the upper bound:
\[
\begin{split}
\anE (\vn, \vn) 	& = (\K \Pizpmo \nabla \vn, \Pizpmo \nabla\vn)_{0,\E} + \SEo( (I-\Pinabla) \vn, (I-\Pinabla)\vn) \\
			& \le \kcal^*  \Vert \Pizpmo \nabla \vn \Vert_{0,\E}^2 + \alpha^*(\p)  \vert (I-\Pinabla) \vn \vert_{1,\E}^2 \le \kcal^*  \Vert \nabla \vn \Vert_{0,\E}^2 + \alpha^*(\p)  \Vert \nabla  \vn \Vert_{0,\E}^2 \\
			& \le  (\kcal^* + \alpha^*(\p))  \vert \vn \vert_{1,\E}^2.
\end{split}
\]
For what concerns the lower bound, we have
\[
\begin{split}
\anE (\vn, \vn) 	& = (\K \Pizpmo \nabla \vn, \Pizpmo \nabla\vn)_{0,\E} + \SEo( (I-\Pinabla)\vn, (I-\Pinabla)\vn) \\
			& \ge \kcal _* \Vert \Pizpmo \nabla \vn \Vert_{0,\E}^2 + \alpha_*(\p) \vert (I-\Pinabla) \vn \vert_{1,\E}^2\\
			& \ge \kcal _* \Vert \Pizpmo \nabla \vn \Vert_{0,\E}^2 + \alpha_*(\p) \Vert (I-\Pizpmo) \nabla \vn \Vert_{0,\E}^2\\
			& \ge \min(\kcal _*, \alpha_*(\p)) \vert \vn \vert_{1,\E}^2.
\end{split}
\]
\end{proof}
For what concerns the discrete counterpart of~$\b(\cdot, \cdot)$, we pick
\begin{equation} \label{bn}
\bn(\un, \vn) = \sum_{\E\in \taun} \bnE(\un,\vn) \quad \forall \un,\,\vn \in H^1(\Omega),
\end{equation}
where, for all~$\E \in \taun$,
\[
\bnE(\un,\vn) =(\V \Piz \un, \Piz \vn) _{0,\E} \quad \forall \un,\,\vn \in H^1(\E),
\]
which are computable, owing to the fact that~$\Pizpmo$ is available in closed form, from the degrees of freedom; we recall that we are assuming to be able to compute exactly integrals of given smooth functions (otherwise, a sufficiently good quadrature formula would suffice).

Finally, we focus on the discrete counterpart of~$\c(\cdot, \cdot)$:
\begin{equation} \label{cn}
\cn(\un, \vn) = \sum_{\E\in \taun} \cnE(\un,\vn) \quad \forall \un,\, \vn \in H^1(\Omega)
\end{equation}
where, for all~$\E \in \taun$,
\[
\cnE(\un,\vn) = (\Piz \un, \Piz \vn)_{0,\E} +\SEz((I-\Piz)\un, (I-\Piz)\vn) \quad \forall \un,\,\vn \in H^1(\E),\\
\]
and where~$\SEz: \ker(\Piz)^2\rightarrow \mathbb R$ is a bilinear form computable via the set of the local degrees of freedom, such that
\begin{equation} \label{stabilization:SE0}
\SEz (\vn,\vn) \ge \beta_*(\p) \Vert \vn \Vert_{0,\E}^2  \quad \forall \vn\in \ker(\Piz),
\end{equation}
and such that
\begin{equation} \label{continuity:S0}
\SEz (\vn-\Piz\vn, \vn-\Piz \vn) \le \hE^2 \beta^* \vert \vn -\Pinablapmo \vn \vert_{1,\E}^2\quad \forall \vn \in H^1(\E),
\end{equation}
for some positive constants~$\beta_*(\p)$ independent of~$\hE$ but not of~$\p$, and~$\beta^*$ independent of~$\hE$ and~$\p$.

The following result concerns the continuity and the coercivity of~$\cn$.
\begin{lem} 
The discrete bilinear form~$\cn$ in~\eqref{cn} satisfies the two following properties:
\begin{equation} \label{properties:cn}
\begin{split}
& \min(1, \beta_*(\p)) \Vert \vn \Vert_{0,\E}^2 \le \cnE (\vn, \vn) \quad \forall \vn \in \VnE,\\
& \cnE (\vn, \vn) \le \max(1 , \beta^*) (\Vert \vn \Vert_{0,\E}^2+ \hE^2 \vert \vn \vert _{1,\E}^2) \quad \forall \vn \in H^1(\E),\\
\end{split}
\end{equation}
where we recall that~$\beta_*(\p)$ and~$\beta^*$ are introduced in~\eqref{stabilization:SE0} and~\eqref{continuity:S0}, respectively.
\end{lem}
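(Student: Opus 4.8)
The plan is to mirror the strategy used in the proof of Lemma~\ref{lemma:continuity:and:coercivity:an}, splitting $\cnE(\vn,\vn)$ into its consistency part $\Vert \Piz \vn \Vert_{0,\E}^2$ and its stabilization part $\SEz((I-\Piz)\vn,(I-\Piz)\vn)$, and bounding each separately for the lower and the upper estimate in~\eqref{properties:cn}. The two structural facts I would exploit are that $\Piz$ is the $L^2(\E)$-orthogonal projector onto $\mathbb P_{\p-1}(\E)$ --- so that $(I-\Piz)\vn \in \ker(\Piz)$, the Pythagorean identity $\Vert \vn \Vert_{0,\E}^2 = \Vert \Piz\vn \Vert_{0,\E}^2 + \Vert (I-\Piz)\vn \Vert_{0,\E}^2$ holds, and $\Vert \Piz \vn \Vert_{0,\E} \le \Vert \vn \Vert_{0,\E}$ --- and that $\Pinablapmo$ realizes the best $H^1$-seminorm approximation in $\mathbb P_{\p-1}(\E)$.

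For the lower bound, restricting to $\vn \in \VnE$, I would observe that $(I-\Piz)\vn$ lies in $\ker(\Piz)$, so the stabilization coercivity~\eqref{stabilization:SE0} applies and yields $\SEz((I-\Piz)\vn,(I-\Piz)\vn) \ge \beta_*(\p)\Vert(I-\Piz)\vn\Vert_{0,\E}^2$. Adding the consistency term and factoring out the smaller constant gives
\[
\cnE(\vn,\vn) \ge \Vert \Piz\vn\Vert_{0,\E}^2 + \beta_*(\p)\Vert(I-\Piz)\vn\Vert_{0,\E}^2 \ge \min(1,\beta_*(\p))\big(\Vert \Piz\vn\Vert_{0,\E}^2 + \Vert(I-\Piz)\vn\Vert_{0,\E}^2\big),
\]
and the Pythagorean identity collapses the bracket to $\Vert \vn\Vert_{0,\E}^2$, which is precisely the first claim.

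For the upper bound, taking now any $\vn \in H^1(\E)$, I would bound the consistency term by the $L^2$-contraction property $\Vert\Piz\vn\Vert_{0,\E}^2 \le \Vert\vn\Vert_{0,\E}^2$, and the stabilization term by the continuity hypothesis~\eqref{continuity:S0}, namely $\SEz((I-\Piz)\vn,(I-\Piz)\vn) \le \hE^2\beta^* \vert \vn - \Pinablapmo\vn\vert_{1,\E}^2$. Summing the two contributions and factoring out $\max(1,\beta^*)$ then produces the second inequality in~\eqref{properties:cn}.

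The only step that is not purely mechanical is this last one: the continuity assumption~\eqref{continuity:S0} controls the stabilization by the $H^1$-seminorm of $\vn - \Pinablapmo\vn$ rather than by $\vert\vn\vert_{1,\E}$ directly. To pass to the right-hand side of~\eqref{properties:cn}, I would use that $\Pinablapmo$ minimizes the $H^1$-seminorm of the error over $\mathbb P_{\p-1}(\E)$; choosing the admissible competitor $q=0\in\mathbb P_{\p-1}(\E)$ gives $\vert \vn - \Pinablapmo\vn\vert_{1,\E} \le \vert \vn\vert_{1,\E}$, after which the weight $\hE^2$ matches exactly. This best-approximation substitution is the main (indeed essentially the only) nontrivial point; everything else is the same bookkeeping as in Lemma~\ref{lemma:continuity:and:coercivity:an}.
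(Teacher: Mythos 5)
Your proof is correct and follows essentially the same route as the paper: the paper proves the lower bound exactly as in Lemma~\ref{lemma:continuity:and:coercivity:an} (coercivity of $\SEz$ plus the $L^2$-orthogonal splitting of $\Vert \vn \Vert_{0,\E}^2$), and the upper bound via the contraction $\Vert \Piz \vn \Vert_{0,\E} \le \Vert \vn \Vert_{0,\E}$, the continuity bound~\eqref{continuity:S0}, and the estimate $\vert \vn - \Pinablapmo \vn \vert_{1,\E} \le \vert \vn \vert_{1,\E}$. The best-approximation step with competitor $q=0$ that you single out is precisely the (tacit) justification of that last inequality in the paper's chain, so nothing is missing.
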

\begin{proof}
The proof of the lower bound is the same as that of its counterpart in Lemma~\ref{lemma:continuity:and:coercivity:an}. For what concerns the upper bound, we proceed as follows:
\[
\begin{split}
\cnE (\vn, \vn) 	& = (\Piz \vn, \Piz \vn)_{0,\E} + \SEz( (I-\Piz)\vn, (I-\Piz)\vn)\\
			& \le \Vert \vn \Vert_{0,\E}^2 + \hE^2\beta^* \vert \vn - \Pinablapmo \vn \vert_{1,\E} ^2 \le \Vert \vn \Vert_{0,\E}^2 + \hE^2\beta^*  \vert \vn \vert_{1,\E}^2 \\
			& \le \max(1,  \beta^*) (\Vert \vn \Vert_{0,\E}^2 + \hE^2 \vert \vn \vert_{1,\E}^2 ).
\end{split}
\]
\end{proof}

\begin{remark} 
Following~\cite{VEM_eig_basic}, one could in principle construct a method by removing the stabilization~$\SEz$.
The reason for which we stabilize the bilinear form~$\cn$ is simply that otherwise the resulting matrix could be singular.
Notwithstanding, we experienced numerically that employing the stabilization leads to better performance of the routines for the solution of generalized eigenvalue problems.
\end{remark}

\subsubsection{Explicit choices for the stabilizations $\SEz$ and $\SEo$} \label{subsubsection:stabilizations}
In this section, we introduce two \emph{explicit} stabilizing bilinear forms~$\SEo$ and~$\SEz$, see~\eqref{stabilization:SE1} and~\eqref{stabilization:SE0}, respectively, with explicit continuity and coercivity bounds in terms of~$\p$
on~$\alpha_*(p)$, $\alpha^*(\p)$, and $\beta_*(\p)$.

\begin{thm} \label{theorem:stabilization:SE1}
Given
\begin{equation} \label{explicit:stabilization:SE1}
\SEo(\un,\vn) = \frac{\p^2}{\hE^2} (\Pizpmt \un, \Pizpmt \vn)_{0,\E} + \frac{\p}{\hE} (\un, \vn)_{0,\partial\E},
\end{equation}
the following bounds on the constants $\alpha_*(\p)$ and $\alpha^*(\p)$ in~\eqref{stabilization:SE1} hold true:
\begin{equation} \label{bound:alpha}
\alpha_*(\p) \gtrsim \p^{-5},\quad \alpha^*(\p) \lesssim \p^{2}.
\end{equation}
\end{thm}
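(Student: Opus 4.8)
The plan is to verify the two inequalities in~\eqref{stabilization:SE1} separately for the explicit choice~\eqref{explicit:stabilization:SE1}: the continuity bound $\SEo(\vn,\vn)\lesssim\p^2\vert\vn\vert_{1,\E}^2$ for all $\vn\in H^1(\E)$ with $\Pinabla\vn=0$, and the coercivity bound $\vert\vn\vert_{1,\E}^2\lesssim\p^5\,\SEo(\vn,\vn)$ for all $\vn\in\VnE$ with $\Pinabla\vn=0$. Throughout I would exploit the consequence of $\Pinabla\vn=0$ stemming from the second line of~\eqref{full:projector:c}, namely $\int_{\partial\E}\vn=0$, which activates a Poincar\'e inequality $\Vert\vn\Vert_{0,\E}\lesssim\hE\vert\vn\vert_{1,\E}$ with a constant depending only on the shape-regularity parameter~$\gamma$ of assumptions (\textbf{D1})--(\textbf{D2}) and not on~$\p$.

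The continuity bound is the easy half. For the first term of~\eqref{explicit:stabilization:SE1} I would use that $\Pizpmt$ is an $L^2$-contraction together with the Poincar\'e inequality, getting $\frac{\p^2}{\hE^2}\Vert\Pizpmt\vn\Vert_{0,\E}^2\le\frac{\p^2}{\hE^2}\Vert\vn\Vert_{0,\E}^2\lesssim\p^2\vert\vn\vert_{1,\E}^2$. For the boundary term I would invoke a multiplicative trace inequality $\Vert\vn\Vert_{0,\partial\E}^2\lesssim\hE^{-1}\Vert\vn\Vert_{0,\E}^2+\Vert\vn\Vert_{0,\E}\vert\vn\vert_{1,\E}$ and apply Poincar\'e once more, yielding $\Vert\vn\Vert_{0,\partial\E}^2\lesssim\hE\vert\vn\vert_{1,\E}^2$ and hence $\frac{\p}{\hE}\Vert\vn\Vert_{0,\partial\E}^2\lesssim\p\vert\vn\vert_{1,\E}^2\le\p^2\vert\vn\vert_{1,\E}^2$. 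Summing gives $\alpha^*(\p)\lesssim\p^2$.

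For the coercivity bound I would start from the integration-by-parts identity
\[
\vert\vn\vert_{1,\E}^2=-\int_\E \vn\,\Delta\vn+\int_{\partial\E}\vn\,\partial_{\n}\vn,
\]
valid since $\Delta\vn\in\mathbb P_{\p-1}(\E)$, and estimate each summand against $\sqrt{\SEo(\vn,\vn)}\,\vert\vn\vert_{1,\E}$. The volume term is controlled by the \emph{first} stabilization term: by the enhancement constraint~\eqref{local:enhanced:space} together with $\Pinabla\vn=0$, the component of $\Delta\vn$ lying in $\mathbb P_{\p-1}(\E)\setminus\mathbb P_{\p-2}(\E)$ is annihilated when tested against~$\vn$, so that $\int_\E\vn\,\Delta\vn=\int_\E\Pizpmt\vn\,\Pizpmt(\Delta\vn)$. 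A Cauchy--Schwarz inequality, the $L^2$-contractivity of $\Pizpmt$, the bound $\Vert\Pizpmt\vn\Vert_{0,\E}\le\frac{\hE}{\p}\sqrt{\SEo(\vn,\vn)}$ read off from~\eqref{explicit:stabilization:SE1}, and the $\p$-inverse estimate~\eqref{p:inverse:estimate:on:polygon} $\Vert\Delta\vn\Vert_{0,\E}\lesssim\frac{\p^2}{\hE}\vert\vn\vert_{1,\E}$ then deliver $\vert\int_\E\vn\,\Delta\vn\vert\lesssim\p\,\sqrt{\SEo(\vn,\vn)}\,\vert\vn\vert_{1,\E}$.

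The boundary term is the main obstacle. Bounding $\Vert\vn\Vert_{0,\partial\E}$ by the \emph{second} stabilization term is immediate, $\Vert\vn\Vert_{0,\partial\E}\le(\hE/\p)^{1/2}\sqrt{\SEo(\vn,\vn)}$; the delicate factor is $\Vert\partial_{\n}\vn\Vert_{0,\partial\E}\le\Vert\nabla\vn\Vert_{0,\partial\E}$, for which I need a $\p$-explicit trace inverse estimate of the form $\Vert\nabla\vn\Vert_{0,\partial\E}\lesssim\p^{3}\hE^{-1/2}\vert\vn\vert_{1,\E}$. This is exactly where the suboptimal powers of~$\p$ enter: the normal derivative of a virtual function is neither polynomial nor known in closed form, so one cannot appeal to a plain polynomial trace inequality and must instead control the full boundary trace of $\nabla\vn$ via a multiplicative trace inequality combined with an $H^2$-type inverse estimate for functions with polynomial Laplacian, in the spirit of~\cite{hpVEMbasic,hpVEMcorner}. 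Granting this estimate, the boundary term is $\lesssim\p^{5/2}\sqrt{\SEo(\vn,\vn)}\,\vert\vn\vert_{1,\E}$, which dominates the volume contribution; combining the two gives $\vert\vn\vert_{1,\E}^2\lesssim\p^{5/2}\sqrt{\SEo(\vn,\vn)}\,\vert\vn\vert_{1,\E}$, and dividing by $\vert\vn\vert_{1,\E}$ and squaring yields $\vert\vn\vert_{1,\E}^2\lesssim\p^{5}\,\SEo(\vn,\vn)$, i.e. $\alpha_*(\p)\gtrsim\p^{-5}$, as asserted in~\eqref{bound:alpha}.
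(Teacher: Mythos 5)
Your continuity half and your treatment of the volume term are correct and essentially coincide with the paper's argument: the paper also starts from $\vert \vn\vert_{1,\E}^2 = -\int_\E \Delta\vn\,\vn + \int_{\partial\E}\partial_\n\vn\,\vn$, uses the enhancing constraints of~\eqref{local:enhanced:space} together with $\Pinabla\vn=0$ to replace $\vn$ by $\Pizpmt\vn$ in the volume integral, and controls $\Vert\Delta\vn\Vert_{0,\E}$ via the inverse estimate~\eqref{p:inverse:estimate:on:polygon}; your exponent bookkeeping ($\p^{5/2}$ from the boundary, $\p$ from the volume, hence $\alpha_*(\p)\gtrsim\p^{-5}$) also matches. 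The genuine gap is in your boundary term. You estimate $\int_{\partial\E}\partial_\n\vn\,\vn$ by Cauchy--Schwarz in $L^2(\partial\E)$ and then require the trace inverse estimate $\Vert\nabla\vn\Vert_{0,\partial\E}\lesssim \p^3\hE^{-1/2}\vert\vn\vert_{1,\E}$, to be obtained from ``a multiplicative trace inequality combined with an $H^2$-type inverse estimate for functions with polynomial Laplacian''. This route fails: a multiplicative trace inequality applied to $\nabla\vn$ needs $\nabla\vn\in H^1(\E)$, i.e.\ $\vn\in H^2(\E)$, and functions in $\VtildenE$ are in general only in $H^{3/2+\epsilon}(\E)$ on a (possibly nonconvex) polygon, with $\epsilon$ dictated by the corner angles; moreover, no $\p$-explicit bound on the full Hessian of a virtual function is available in~\cite{hpVEMcorner} or elsewhere --- estimate~\eqref{p:inverse:estimate:on:polygon} controls only $\Vert\Delta\vn\Vert_{0,\E}$, which is the trace of the Hessian and does not dominate $\vert\vn\vert_{2,\E}$ for functions with nonzero boundary values. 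In short, it is not even clear a priori that $\partial_\n\vn\in L^2(\partial\E)$ with a quantified constant, so the key inequality you ``grant'' is unsupported.

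The paper circumvents precisely this obstacle by never leaving the duality framework: it bounds $\int_{\partial\E}\partial_\n\vn\,\vn \le \Vert\partial_\n\vn\Vert_{-\frac{1}{2},\partial\E}\Vert\vn\Vert_{\frac{1}{2},\partial\E}$ and then, following \cite[Theorem~2]{hpVEMcorner}, uses the Neumann trace inequality $\Vert\partial_\n\vn\Vert_{-\frac{1}{2},\partial\E}\lesssim \vert\vn\vert_{1,\E}+\Vert\Delta\vn\Vert_{0,\E}\lesssim \p^2\vert\vn\vert_{1,\E}$ (which holds for any $H^1$ function with $L^2$ Laplacian, with $\p$-independent constant, see \cite[Theorem~A.33]{SchwabpandhpFEM}), while the $\p$-dependence on the other factor is extracted from the \emph{known, piecewise polynomial} Dirichlet trace: the one-dimensional $\p$-inverse inequality plus space interpolation give $\Vert\vn\Vert_{\frac{1}{2},\partial\E}\lesssim \p\,\Vert\vn\Vert_{0,\partial\E}$, and $\Vert\vn\Vert_{0,\partial\E}$ is absorbed by the second term of~\eqref{explicit:stabilization:SE1}. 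This yields the same $\p^{5/2}\sqrt{\SEo(\vn,\vn)}\,\vert\vn\vert_{1,\E}$ bound you aimed for, but with every ingredient justified. To repair your proof, replace your $L^2(\partial\E)$ pairing with this $H^{-\frac{1}{2}}$--$H^{\frac{1}{2}}$ pairing; the rest of your argument can stand as written.
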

\begin{proof}
The proof is analogous to that of~\cite[Theorem 2]{hpVEMcorner}. Notwithstanding, we show a few details, since here we employ enhanced virtual element spaces,
which are slightly different from the standard ones of~\cite{hpVEMcorner}.

More precisely, we only show some details regarding the bound on $\alpha_*(\p)$. Given $\E \in \taun$ and $\vn\in \ker(\Pinabla)$, it holds true that
\[
\begin{split}
\vert \vn \vert^2_{1,\E}	& = \int_\E \nabla \vn \cdot \nabla \vn = \int_{\E} -\Delta \vn\, \vn + \int_{\partial \E} \partial_\n \vn\, \vn  = \int_{\E} -\Delta \vn\, \Piz \vn + \int_{\partial \E} \partial_\n \vn\, \vn.\\
\end{split}
\]
Using the enhancing constraints in~\eqref{local:enhanced:space} and the fact that~$\vn$ belongs to~$\ker(\Pinabla)$, we deduce
\[
\begin{split}
\vert \vn \vert^2_{1,\E}	& = \int_{\E} -\Delta \vn \, \Pizpmt \vn + \int_{\partial \E} \partial_\n \vn \, \vn\\
					& \le \Vert \Delta \vn \Vert_{0, \E} \Vert \Pizpmt\vn\Vert_{0, \E}  + \Vert \partial_\n \vn\Vert_{-\frac{1}{2},\partial \E} \Vert \vn\Vert_{\frac{1}{2},\partial\E}.
\end{split}
\]
Having this, it suffices to proceed as in~\cite[Theorem 2]{hpVEMcorner}.
\end{proof}
We point out that the bound on~$\alpha^*(\p)$ can be actually improved, see~\cite[Theorem 2]{hpVEMcorner};
however, as the topic is bristly with technicalities, we avoid further technicalities and notations, sticking rather to the bounds in~\eqref{bound:alpha}.

\medskip
Next, we introduce a stabilization~$\SEz$ satisfying the properties~\eqref{stabilization:SE0} and~\eqref{continuity:S0}.
\begin{thm} \label{theorem S0}
Given
\begin{equation} \label{explicit:stabilization:S0}
\SEz( \un, \vn) = \frac{\hE}{\p^2} (\un,\vn)_{0, \partial \E},
\end{equation}
the following bounds on~$\beta_*(\p)$ and~$\beta^*$ introduced in~\eqref{stabilization:SE0} and~\eqref{continuity:S0}, respectively, hold true:
\[
\beta_*(\p) \gtrsim \p^{-6},\quad  \beta^* \lesssim 1.
\]
\end{thm}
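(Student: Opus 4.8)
The plan is to establish the two estimates defining $\beta_*(\p)$ and $\beta^*$ for the explicit choice \eqref{explicit:stabilization:S0} separately, namely the continuity bound \eqref{continuity:S0} and the coercivity bound \eqref{stabilization:SE0}; as already announced for Theorem~\ref{theorem:stabilization:SE1}, the polynomial inverse and trace estimates of \cite{hpVEMcorner} are the workhorses. For the continuity bound I would write $\SEz(\vn-\Piz\vn,\vn-\Piz\vn)=\frac{\hE}{\p^2}\Vert\vn-\Piz\vn\Vert_{0,\partial\E}^2$ and split
\[
\vn-\Piz\vn=(\vn-\Pinablapmo\vn)+(\Pinablapmo\vn-\Piz\vn)=:e_\nabla+q,\qquad q\in\mathbb P_{\p-1}(\E).
\]
On $e_\nabla$ I would use a scaled multiplicative trace inequality together with a Poincar\'e inequality (legitimate, since $e_\nabla$ has zero average on $\partial\E$ by the second defining property of $\Pinablapmo$) to get $\Vert e_\nabla\Vert_{0,\partial\E}^2\lesssim\hE\,\vert e_\nabla\vert_{1,\E}^2$, with no loss in $\p$. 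On the polynomial part $q$ I would instead invoke the polynomial trace inequality $\Vert q\Vert_{0,\partial\E}^2\lesssim\frac{\p^2}{\hE}\Vert q\Vert_{0,\E}^2$ and bound $\Vert q\Vert_{0,\E}\le\Vert\vn-\Pinablapmo\vn\Vert_{0,\E}+\Vert\vn-\Piz\vn\Vert_{0,\E}\lesssim\Vert e_\nabla\Vert_{0,\E}\lesssim\hE\,\vert e_\nabla\vert_{1,\E}$, using the $L^2(\E)$-optimality of $\Piz$ and again Poincar\'e. Summing, $\Vert\vn-\Piz\vn\Vert_{0,\partial\E}^2\lesssim\p^2\hE\,\vert e_\nabla\vert_{1,\E}^2$, so the prefactor $\frac{\hE}{\p^2}$ exactly absorbs the spurious $\p^2$ and gives $\SEz(\vn-\Piz\vn,\vn-\Piz\vn)\lesssim\hE^2\,\vert\vn-\Pinablapmo\vn\vert_{1,\E}^2$, i.e.\ $\beta^*\lesssim1$.

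For the coercivity bound, fix $\vn\in\VnE$ with $\Piz\vn=0$; the claim $\beta_*(\p)\gtrsim\p^{-6}$ reduces to proving $\Vert\vn\Vert_{0,\E}^2\lesssim\p^4\hE\,\Vert\vn\Vert_{0,\partial\E}^2$, because then $\SEz(\vn,\vn)=\frac{\hE}{\p^2}\Vert\vn\Vert_{0,\partial\E}^2\gtrsim\p^{-6}\Vert\vn\Vert_{0,\E}^2$. I would decompose $\vn=\vn^{\mathcal H}+\vn^{\mathcal B}$, where $\vn^{\mathcal H}$ is harmonic with $\vn^{\mathcal H}=\vn$ on $\partial\E$, and $\vn^{\mathcal B}\in H^1_0(\E)$ solves $\Delta\vn^{\mathcal B}=\Delta\vn=:g\in\mathbb P_{\p-1}(\E)$. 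The harmonic part is controlled directly by the boundary data through the $L^2(\partial\E)\to L^2(\E)$ stability of the harmonic extension on shape-regular elements, giving $\Vert\vn^{\mathcal H}\Vert_{0,\E}^2\lesssim\hE\,\Vert\vn\Vert_{0,\partial\E}^2$ with no $\p$-dependence.

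The core is the bubble $\vn^{\mathcal B}$, and here I would exploit the orthogonality constraint. Since $g\in\mathbb P_{\p-1}(\E)$ and $\Piz\vn=0$, one has $0=\int_\E\vn\,g=\int_\E\vn^{\mathcal H}g+\int_\E\vn^{\mathcal B}g$, while integration by parts (using $\vn^{\mathcal B}=0$ on $\partial\E$) gives $\int_\E\vn^{\mathcal B}g=-\vert\vn^{\mathcal B}\vert_{1,\E}^2$; hence $\vert\vn^{\mathcal B}\vert_{1,\E}^2=\int_\E\vn^{\mathcal H}g\le\Vert\vn^{\mathcal H}\Vert_{0,\E}\Vert g\Vert_{0,\E}$. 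Since $\vn^{\mathcal B}\in\VtildenE$, the $\p$-inverse estimate \eqref{p:inverse:estimate:on:polygon} yields $\Vert g\Vert_{0,\E}=\Vert\Delta\vn^{\mathcal B}\Vert_{0,\E}\lesssim\frac{\p^2}{\hE}\vert\vn^{\mathcal B}\vert_{1,\E}$, whence $\vert\vn^{\mathcal B}\vert_{1,\E}\lesssim\frac{\p^2}{\hE}\Vert\vn^{\mathcal H}\Vert_{0,\E}$; a Poincar\'e inequality on the bubble then gives $\Vert\vn^{\mathcal B}\Vert_{0,\E}\lesssim\hE\,\vert\vn^{\mathcal B}\vert_{1,\E}\lesssim\p^2\Vert\vn^{\mathcal H}\Vert_{0,\E}$. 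Collecting the two contributions, $\Vert\vn\Vert_{0,\E}^2\lesssim\p^4\Vert\vn^{\mathcal H}\Vert_{0,\E}^2\lesssim\p^4\hE\,\Vert\vn\Vert_{0,\partial\E}^2$, as required.

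I expect the coercivity bound to be the delicate point, and within it the treatment of the bubble. The naive route through the identity $\vert\vn\vert_{1,\E}^2=\int_{\partial\E}\vn\,\partial_{\n}\vn$ (valid here, since $\int_\E\vn\,\Delta\vn=0$) would force control of the normal trace $\partial_{\n}\vn$, which need not even lie in $L^2(\partial\E)$ because of the corners of $\E$ and would require the duality between $H^{-1/2}$ and $H^{1/2}$ as in \cite{hpVEMcorner}. The orthogonality identity $\int_\E\vn\,g=0$ combined with the $\p$-inverse estimate \eqref{p:inverse:estimate:on:polygon} is precisely what sidesteps this difficulty; tracking the exact powers of $\p$ in the harmonic-extension bound and in the inverse estimate is the technical heart that produces the clean exponent $\p^{-6}$.
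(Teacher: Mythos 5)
Your continuity bound is essentially the paper's own argument: the same splitting $\vn-\Piz\vn=(\vn-\Pinablapmo\vn)+(\Pinablapmo\vn-\Piz\vn)$, trace plus Poincar\'e on the non-polynomial piece, and the polynomial trace inverse inequality on the other, with the prefactor $\hE/\p^2$ absorbing the spurious $\p^2$; the only detail the paper adds is that the trace inverse estimate is applied trianglewise on the shape-regular subtriangulation of $\E$, which is what guarantees uniform constants under (\textbf{D1})--(\textbf{D2}). For the coercivity bound, however, you take a genuinely different route. The paper first gains a power via $\Vert\vn\Vert_{0,\E}\lesssim\p^{-1}\vert\vn\vert_{1,\E}$ (best polynomial approximation, using $\Piz\vn=0$), then estimates $\vert\vn\vert_{1,\E}^2=\int_{\partial\E}\partial_\n\vn\,\vn$ through exactly the $H^{-\frac{1}{2}}(\partial\E)$--$H^{\frac{1}{2}}(\partial\E)$ duality you flag as the thing to avoid, controlling $\Vert\partial_\n\vn\Vert_{-\frac{1}{2},\partial\E}$ by the Neumann trace inequality together with the inverse estimate \eqref{p:inverse:estimate:on:polygon}, and finally passing from $\Vert\vn\Vert_{\frac{1}{2},\partial\E}$ to $\p\,\Vert\vn\Vert_{0,\partial\E}$ by a one-dimensional inverse estimate and interpolation; the net result is precisely your intermediate inequality $\Vert\vn\Vert_{0,\E}^2\lesssim\p^4\hE\Vert\vn\Vert_{0,\partial\E}^2$, so both roads end at $\beta_*(\p)\gtrsim\p^{-6}$. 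Your harmonic/bubble decomposition is correct: the orthogonality $\int_\E\vn\,\Delta\vn=0$ uses the enhanced structure ($\Delta\vn\in\mathbb P_{\p-1}(\E)$ and $\Piz\vn=0$) in the same way the paper does, and the inverse estimate \eqref{p:inverse:estimate:on:polygon} legitimately applies to $\vn^{\mathcal B}$ since its bubble-based proof in the cited reference requires only a polynomial Laplacian, not boundary conditions. What your route buys is the complete avoidance of fractional Sobolev norms and interpolation-space theory; what it costs is the one ingredient you assert without justification, namely the $L^2(\partial\E)\to L^2(\E)$ stability of the harmonic extension, $\Vert\vn^{\mathcal H}\Vert_{0,\E}^2\lesssim\hE\Vert\vn\Vert_{0,\partial\E}^2$. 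This is true but is not a standard $\h\p$ tool: it is Dahlberg's $L^2$-solvability of the Dirichlet problem on Lipschitz domains (nontangential maximal function estimates), or, more elementarily, it follows from the subharmonicity of $\vert\vn^{\mathcal H}\vert^2$ together with a barrier bound on the normal derivative of the torsion function; in either case you must verify that the constant is uniform over the mesh family, which does hold because (\textbf{D1})--(\textbf{D2}) give a uniform Lipschitz character. With that lemma supplied or properly cited, your proof is complete and delivers the same exponents as the paper's.
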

\begin{proof}
We assume without loss of generality that~$\hE=1$, since the general assertion follows from a scaling argument,
and that~$\p \ge 2$, since we are interested in the asymptotic behavior in terms of~$\p$.

We begin with the bound on~$\beta_*(\p)$. Given $\un \in \ker(\Piz)$, we apply Theorem~\ref{theorem:best:polynomial:error} to show
\begin{equation} \label{formula:*}
\Vert \un \Vert _ {0,\E} \lesssim \p^{-1}\vert \un \vert_{1,\E}.
\end{equation}
Integrating by parts, using the fact that $\un \in \ker(\Pi^0_{\p-1})$, applying the definition of the $H^{-\frac{1}{2}}(\partial \E)$ norm,
applying the Neumann trace inequality \cite[Theorem A.33]{SchwabpandhpFEM}, and applying the~$\p$-inverse inequality already employed in~\eqref{p:inverse:estimate:on:polygon}, we get
\[
\begin{split}
\vert \un \vert^2_{1,\E} 	& = \int_\E -(\Delta\un) \un + \int_{\partial \E} \partial_\n \un\, \un = \int_{\partial \E} \partial_\n \un \, \un \le \Vert \partial_\n \un \Vert_{-\frac{1}{2}, \partial \E} \Vert \un \Vert_{\frac{1}{2}, \partial \E}  \\
					& \lesssim \left(\vert \un \vert_{1,\E} + \Vert \Delta \un \Vert_{0,\E} \right) \Vert \un  \Vert_{\frac{1}{2}, \partial \E}  \lesssim \p^2 \vert \un \vert_{1,\E} \Vert \un \Vert_{\frac{1}{2}, \partial \E},
\end{split}
\]
whence
\begin{equation} \label{formula:**}
\vert \un \vert_{1,\E} \lesssim \p^2 \Vert \un \Vert_{\frac{1}{2}, \partial \E}.
\end{equation}
Using that~$\un$ is piecewise polynomial over~$\partial \E$ and applying the one dimensional $\p$-inverse inequality \cite[Theorem 3.91]{SchwabpandhpFEM} together with interpolation theory~\cite[Appendix B]{SchwabpandhpFEM}, we deduce
from~\eqref{formula:*} and~\eqref{formula:**} that
\[
\Vert \un \Vert_{0,\E} \lesssim \p^2 \Vert \un \Vert_{0,\partial \E},
\]
which is the claim.

\medskip
For what concerns instead the bound on~$\beta^*$, one has, for all~$\un \in H^1(\E)$,
\[
\begin{split}
\Vert \un -\Piz \un \Vert_{0,\partial \E} \le \Vert \un - \Pinablapmo \un \Vert_{0,\partial \E} + \Vert  \Pinablapmo \un - \Piz\un  \Vert_{0,\partial \E}.
\end{split}
\]
Concerning the first term on the right-hand side, we apply a trace and a Poincar\'e inequality; concerning the second one, we apply the $\p$-trace inverse inequality \cite[equation (4.6.4)]{SchwabpandhpFEM} on every triangle in 
the subtriangulation $\tautilden(\E)$ obtained by connecting the center of any of the maximal balls with respect to which~$\E$ is star-shaped to the vertices of~$\E$, getting
\[
\Vert \un -\Piz \un \Vert_{0,\partial \E} \lesssim \vert \un - \Pinabla\un \vert_{1,\E} + \p \Vert  \Pinablapmo \un - \Piz\un  \Vert_{0,\E} \lesssim \p \vert  \un - \Pinablapmo \un \vert_{1, \E},
\]
which entails the assertion.
\end{proof}

The dependence in terms of~$\p$ of the stability constants of~$\SEo$ and~$\SEz$ seems to be very large and will play a role also in the convergence estimates of the method, see Theorem~\ref{theorem:Tn:tends:to:T}.
In particular, the convergence rate could be polluted by some powers of~$\p$, see for instance the estimates in Theorem~\ref{theorem:convergence:source:problem}. However:
\begin{itemize}
\item the dependence that we have theoretically pinpointed is in principle pessimistic. In the setting of standard (i.e., nonenhanced) VEM, such dependence was proven to be much milder in practice, see \cite[Section 6.4]{hpVEMbasic} and \cite[Section 4.1]{hpVEMcorner};
\item also in the worst possible scenario, i.e., even assuming that the bounds on $\alpha_*(\p)$ and $\alpha^*(\p)$ were sharp,
it is possible to show that the $\p$- (for analytic eigenfunctions) and the $\h\p$-versions (for eigenfunctions with finite Sobolev regularity) of the method lead in any case to exponential convergence
in terms of~$\p$ and in the cubic root of the number of degrees of freedom,
see Theorem~\ref{theorem:convergence:eigenfunctions} and Section~\ref{subsection:hp-version}, respectively.
\end{itemize}

\subsection{The virtual element method} \label{subsection:the:method}
Having described the approximation spaces and the discrete bilinear forms, we define the method associated with the eigenvalue problem~\eqref{continuous:problem:weak}:
\begin{equation} \label{VEM:eigenvalue}
\begin{cases}
\text{find }(\lambdan,\un) \in \mathbb R \times \Vn \text{ such that } \Vert \un \Vert_{0,\Omega} =1\\
\an(\un,\vn) + \bn(\un,\vn)= \lambdan \cn(\un,\vn) \quad \forall \vn \in \Vn.\\
\end{cases}
\end{equation}
The method associated with the source problem~\eqref{weak:formulation:continuous:source} is instead the following: given $\f \in H^1(\Omega)$,
\begin{equation} \label{VEM:source}
\begin{cases}
\text{find } \un \in \Vn \text{ such that}\\
\an(\un,\vn) + \bn(\un,\vn)=  \cn(\f,\vn) \quad \forall \vn \in \Vn.\\
\end{cases}
\end{equation}
The method~\eqref{VEM:eigenvalue} is well-posed thanks to the coercivity of the bilinear form on the left-hand side, which follows from~\eqref{properties:an} and~\eqref{bn},
and the continuity of the right-hand side with respect to the~$\Vert \cdot\Vert_{1,\E}$-norm, see~\eqref{properties:cn}.

We also define the solution operator $\Tn \in \mathcal L(H^1(\Omega))$ as
\begin{equation*}
\Bn(\Tn \f, \vn)  = \cn(\f, \vn)\quad \forall \vn \in \Vn,
\end{equation*}
where we have set $\Bn(\cdot, \cdot) = \an(\cdot, \cdot) + \bn(\cdot, \cdot)$.

Analogously to the continuous case, the operator~$\Tn$ is self-adjoint, compact (since the image of~$\Tn$ has finite dimension), and positive definite.
Besides, given $(\lambdan, \wn)$ an eigenpair of~\eqref{VEM:eigenvalue}, one can prove that $(\frac{1}{\lambdan}, \wn)$ is an eigenpair of the discrete solution operator~$\Tn$.

\section{Convergence analysis of the $\p$-version} \label{section:convergence:analysis}
This section is devoted to show the convergence of the discrete eigenvalues and eigenfunctions to the continuous ones, when employing the $\p$-version of the method.
A particular emphasis is stressed on the case of analytic eigenfunctions, where exponential convergence in terms of~$\p$ is proven.
The exponential convergence in terms of the cubic root of the number of degrees of freedom for singular functions is not theoretically covered in the present paper, but will be the objective of a numerical investigation in Section~\ref{subsection:hp-version}.

The remainder of the section is organized as follows. In Section~\ref{subsection:some:auxiliary:results}, we introduce some technical results and prove $\p$-exponential convergence on the source problem~\eqref{weak:formulation:continuous:source} for analytic solutions;
we investigate instead the approximation of the eigenpairs (with tools stemming from the Babu\v ska-Osborn theory) in Section~\ref{subsection p spectral approximation for compact operators}.

\subsection{Some auxiliary results} \label{subsection:some:auxiliary:results}
We first prove an approximation result on the continuous~\eqref{weak:formulation:continuous:source} and the discrete~\eqref{VEM:source} source problems.
\begin{thm} \label{theorem:Tn:tends:to:T}
Given $\f \in H^1(\Omega)$, let~$u$ and~$\un$ be the solutions to the continuous and discrete source problems~\eqref{weak:formulation:continuous:source} and~\eqref{VEM:source}, respectively. Then, the following bound holds true:
\[
\begin{split}
\vert u -\un \vert_{1,\Omega} 			& \lesssim \mu_1(\p) \hE^2 \vert \f - \Pinablapmo \f \vert_{1,\taun} + \mu_2(\p) \vert u -\uI\vert_{1,\Omega}+\mu_3(\p) \vert u - \upi \vert_{1,\taun} \\
								& \quad + \mu_4 \Vert u - \Piz u \Vert_{0,\Omega} + \mu_5(\p) \Vert \K \nabla u - \Pizpmo (\K \nabla u)\Vert_{0,\Omega} + \mu_5(\p) \Vert \V u - \Piz (\V u) \Vert_{0,\Omega},
\end{split}
\]
where
\[
\begin{split}
& \mu_1 (\p) = \max (\kcal^{-1}_*, \alpha^{-1}_*(\p))  \max(1, \beta^*),\\
& \mu_2 (\p) = 1 + \max (\kcal^{-1}_*, \alpha^{-1}_*(\p)) \left[ \kcal^* + \alpha^*(\p)  +  \nu^*  \right]  , \\
& \mu_3 (\p) = \max (\kcal^{-1}_*, \alpha^{-1}_*(\p)) \left[ \alpha^* (\p)   \right], \\
& \mu_4 = \max (\kcal^{-1}_*, \alpha^{-1}_*(\p)) \nu^* \\ 
& \mu_5 (\p) = \max (\kcal^{-1}_*, \alpha^{-1}_*(\p)), \\
\end{split}
\]
being $\alpha_*(\p)$ and $\alpha^*(\p)$ defined in~\eqref{stabilization:SE1}, $\beta^*$ being defined in~\eqref{continuity:S0}, $\kcal_*$ and $\kcal^*$ being defined in~\eqref{property_K}, and $\nu^*$ being defined in~\eqref{property_V},
and having set $\Piz (u)_{|\E} = \Pi^{0,\E}_{\p-1} (u_{|\E})$ and $\Pinabla(u)_{|\E} = \Pi^{\nabla,\E}_\p (u_{|\E})$.

\end{thm}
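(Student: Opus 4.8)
The plan is to run a Strang-type (variational-crime) argument, taking special care to track the $\p$-dependence carried by the stabilization constants. I would first split the error by inserting the interpolant $\uI \in \Vn$ furnished by Corollary~\ref{corollary:best:interpolation:error}, writing $u - \un = (u-\uI) + (\uI - \un)$ and setting $\deltan := \uI - \un \in \Vn$; the first summand already contributes the $|u-\uI|_{1,\Omega}$ term (the ``$1$'' in~$\mu_2$). For the second summand, the coercivity in Lemma~\ref{lemma:continuity:and:coercivity:an} together with the positivity of~$\bnE$ gives $\min(\kcal_*,\alpha_*(\p))\,|\deltan|_{1,\Omega}^2 \le \Bn(\deltan,\deltan)$, so that $\max(\kcal_*^{-1},\alpha_*^{-1}(\p)) = 1/\min(\kcal_*,\alpha_*(\p))$ is exactly the factor common to every~$\mu_i$. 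Using the discrete equation~\eqref{VEM:source} for~$\un$ and the continuous equation~\eqref{weak:formulation:continuous:source} for~$u$ (legitimate since $\deltan \in H^1_0(\Omega)$), I would rewrite
\[
\Bn(\deltan,\deltan) = \big[\Bn(\uI,\deltan) - \B(u,\deltan)\big] + \big[\c(\f,\deltan) - \cn(\f,\deltan)\big],
\]
and then bound each bracket after inserting an arbitrary piecewise polynomial~$\upi$ of degree~$\p$.

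For the first bracket I would treat the stabilization and consistency parts of~$\anE$ separately. Since $(I-\Pinabla)\upi = 0$, the stabilization part of $\anE(\uI,\deltan)$ equals $\SEo((I-\Pinabla)(\uI-\upi),(I-\Pinabla)\deltan)$, which by~\eqref{stabilization:SE1} is $\lesssim \alpha^*(\p)\,(|u-\uI|_{1,\taun}+|u-\upi|_{1,\taun})\,|\deltan|_{1,\Omega}$; this is the \emph{only} source of the $|u-\upi|_{1,\taun}$ term and explains the $\alpha^*(\p)$ appearing both in~$\mu_2$ and in~$\mu_3$. For the projected part $(\K\Pizpmo\nabla\uI,\Pizpmo\nabla\deltan)$, I would subtract the corresponding piece of $\a(u,\deltan)=(\K\nabla u,\nabla\deltan)$; writing $(\K\nabla u,\nabla\deltan)=(\Pizpmo(\K\nabla u),\Pizpmo\nabla\deltan)+((I-\Pizpmo)(\K\nabla u),(I-\Pizpmo)\nabla\deltan)$, the second summand is $\lesssim \Vert\K\nabla u-\Pizpmo(\K\nabla u)\Vert_{0,\Omega}\,|\deltan|_{1,\Omega}$ (the $\mu_5$ flux term). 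Exploiting the $L^2$-orthogonality $(I-\Pizpmo)(\K\nabla u)\perp\Pizpmo\nabla\deltan$, the remainder collapses to $(\K\Pizpmo\nabla(\uI-u),\Pizpmo\nabla\deltan)\lesssim \kcal^*|u-\uI|_{1,\taun}|\deltan|_{1,\Omega}$ (the $\kcal^*$ in~$\mu_2$). The form~$\bnE$ is handled analogously: I would split $\bnE(\uI,\deltan)-\b(u,\deltan)$ into $(\V\Pizpmo(\uI-u),\Pizpmo\deltan)$ (the $\nu^*$ in~$\mu_2$), $(\V(\Piz u-u),\Pizpmo\deltan)$ (yielding $\mu_4\Vert u-\Piz u\Vert_{0,\Omega}$), and $(\V u-\Piz(\V u),(I-\Pizpmo)\deltan)$ (yielding the $\mu_5$ potential term), each time converting $\Vert\deltan\Vert_{0,\Omega}$ into $|\deltan|_{1,\Omega}$ by a Poincar\'e inequality.

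The second bracket encodes the right-hand-side crime. Using the orthogonality of $\Piz$ elementwise, one has $\c(\f,\deltan)-\cn(\f,\deltan)=\sum_{\E}((I-\Piz)\f,(I-\Piz)\deltan)_{0,\E}-\SEz((I-\Piz)\f,(I-\Piz)\deltan)$. A Cauchy--Schwarz inequality combined with the continuity bound~\eqref{continuity:S0} for $\SEz$ and the best $L^2$ approximation estimate produces $\lesssim \max(1,\beta^*)\,\hE^2\,|\f-\Pinablapmo\f|_{1,\taun}\,|\deltan|_{1,\Omega}$, which is precisely the $\mu_1$ term with its $\hE^2$ scaling. Finally I would collect all contributions, apply Cauchy--Schwarz over the elements, divide through by $\min(\kcal_*,\alpha_*(\p))\,|\deltan|_{1,\Omega}$, and recombine with $|u-\uI|_{1,\Omega}$ via the triangle inequality.

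The main obstacle is the bookkeeping of the variable-coefficient consistency terms: one must arrange matters so that the full continuity constant $\kcal^*+\alpha^*(\p)+\nu^*$ attaches only to $|u-\uI|_{1,\Omega}$, while the polynomial approximation error $|u-\upi|_{1,\taun}$ carries the milder factor $\alpha^*(\p)$ alone. This hinges on using the $L^2$-orthogonality $(I-\Pizpmo)(\K\nabla u)\perp\Pizpmo\nabla\deltan$ (and its analogue for $\V$) in the projected parts of $\anE$ and $\bnE$, so as not to pick up a spurious $\kcal^*|u-\upi|$ or $\nu^*|u-\upi|$ contribution, together with the delicate use of~\eqref{continuity:S0} to generate the correct $\hE^2$ power in the source term.
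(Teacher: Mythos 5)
Your overall route coincides with the paper's: the same coercivity-plus-Strang splitting in $\deltan$, the same identity $\Bn(\deltan,\deltan)=[\Bn(\uI,\deltan)-\B(u,\deltan)]+[\c(\f,\deltan)-\cn(\f,\deltan)]$, the same treatment of the source-term crime via \eqref{continuity:S0} and Poincar\'e (yielding $\mu_1$ with the $\hE^2$ scaling), and an algebraically exact handling of the potential terms. The one step that fails is the claimed ``collapse'' in the diffusion consistency term. After using $(\Pizpmo(\K\nabla u),\Pizpmo\nabla\deltan)_{0,\E}=(\K\nabla u,\Pizpmo\nabla\deltan)_{0,\E}$, the remainder is $(\K(\Pizpmo\nabla\uI-\nabla u),\Pizpmo\nabla\deltan)_{0,\E}$, and rewriting this as $(\K\,\Pizpmo\nabla(\uI-u),\Pizpmo\nabla\deltan)_{0,\E}$ requires
\[
(\K\,(I-\Pizpmo)\nabla u,\;\Pizpmo\nabla\deltan)_{0,\E}=0.
\]
The orthogonality you invoke, namely $(I-\Pizpmo)(\K\nabla u)\perp[\mathbb P_{\p-1}(\E)]^2$, is true but not the one needed: here the test object is $\K\,\Pizpmo\nabla\deltan$, which is \emph{not} a polynomial when $\K$ is variable (assumption (\textbf{A}) allows any piecewise analytic $\K$). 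Your identity holds only for piecewise constant $\K$, and your closing paragraph makes it load-bearing, since it is exactly what you use to keep $\kcal^*$ out of the $\vert u-\upi\vert_{1,\taun}$ budget.

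The damage is limited, and the repair is what the paper itself does: its proof inserts $\pm\anE(u,\deltan)$, splits into $\anE(\uI-u,\deltan)$ plus the consistency defect $\anE(u,\deltan)-\aE(u,\deltan)$, and its collected estimate retains precisely the surviving term as $\kcal^*\Vert\nabla u-\Pizpmo\nabla u\Vert_{0,\taun}$. Since $\Pizpmo\nabla u$ is the best $L^2(\E)$ polynomial approximation of $\nabla u$, this is bounded by $\kcal^*\vert u-\upi\vert_{1,\taun}$, so the net effect on your argument is only that $\mu_3(\p)$ carries $\kcal^*+\alpha^*(\p)$ instead of $\alpha^*(\p)$ alone (a factor the paper's stated $\mu_3$ silently absorbs as well); this is harmless downstream because $\kcal^*$ is $\p$-independent, so Theorems~\ref{theorem:convergence:source:problem} and~\ref{theorem:p:exponential:convergence} are untouched. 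Two minor remarks: (i) your suggestion that an analogous orthogonality is needed for the $\V$ terms is a red herring --- your decomposition of $\bnE(\uI,\deltan)-\bE(u,\deltan)$ is exact, and the ``spurious'' middle term is retained in the statement as $\mu_4\Vert u-\Piz u\Vert_{0,\Omega}$; (ii) sourcing $\vert u-\upi\vert_{1,\taun}$ from $\SEo((I-\Pinabla)(\uI-\upi),(I-\Pinabla)\deltan)$ rather than, as the paper does, from $\SEo((I-\Pinabla)u,(I-\Pinabla)\deltan)\le\alpha^*(\p)\vert u-\Pinabla u\vert_{1,\E}\vert\deltan\vert_{1,\E}\le\alpha^*(\p)\vert u-\upi\vert_{1,\E}\vert\deltan\vert_{1,\E}$, is immaterial bookkeeping yielding the same constants.
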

\begin{proof}
Setting $\deltan = \un-\uI$, we apply~\eqref{properties:an}, we use the positiveness of~$\bn$ in~\eqref{bn}, and we perform some computations, getting
\[
\begin{split}
\min (\kcal_* ,\alpha_*(\p)) \vert \deltan \vert_{1,\Omega}^2	& \le  \an(\deltan, \deltan)  \le \an(\deltan, \deltan) + \bn(\deltan, \deltan) \\
												& = \an(\un, \deltan) + \bn (\un, \deltan) - \sum_{\E\in \taun} \{ \anE(\uI, \deltan) + \bn(\uI, \deltan) \}.
\end{split}
\]
We note that, for all $\E\in \taun$,
\[
\begin{split}
\anE(\uI, \deltan) + \bn(\uI, \deltan) 	& = \anE(\uI-u,\deltan) + \anE(u, \deltan) - \aE(u,\deltan)\\
							& \quad +\aE(u,\deltan) + \bE(u,\deltan) + \bnE(\uI-u, \deltan) + \bnE(u,\deltan) - \bE(u,\deltan).
\end{split}
\]
Thus, recalling~\eqref{weak:formulation:continuous:source} and~\eqref{VEM:source}, we obtain
\begin{equation} \label{main:equation:abstract:error:analysis}
\begin{split}
\min(\kcal_*, \alpha_*(\p)) \vert \deltan \vert_{1,\Omega}^2\le  	&\underbrace{\cn(\f,\deltan) - \c(\f ,\deltan)}_{A}     -\sum_{\E\in\taun} \{  \underbrace{\anE(\uI-u,\deltan)}_{B^\E} + \underbrace{\anE(u, \deltan) - \aE(u,\deltan)}_{C^\E}  \\
												& \quad\quad\quad\quad  \quad\quad\quad\quad  \quad\quad\quad + \underbrace{\bnE(\uI-u, \deltan)}_{D^\E} + \underbrace{\bnE(u,\deltan) - \bE(u,\deltan)}_{E^\E} \}.
\end{split}
\end{equation}
We bound the five terms on the right-hand side of~\eqref{main:equation:abstract:error:analysis} separately.

We begin with the first one.
Applying the definitions of~$\c$ and~$\cn$ in~\eqref{notation:for:bilinear:forms} and~\eqref{cn}, respectively,
using~\eqref{continuity:S0} and the properties of the~$L^2$ projector, and applying a Poincar\'e inequality, we deduce
\[
\begin{split}
A 	& =\cn(\f,\deltan) - \c(\f ,\deltan) \\
	& = \sum_{\E \in \taun} \left( (\Piz\f, \Piz \deltan)_{0,\E} + \SEz( (I-\Piz) \f, (I-\Piz) \deltan) - (\f,\deltan)_{0,\E} \right) \\
	& \le \sum_{\E \in \taun} \left( \hE^2 \beta^* \vert \f - \Pinablapmo \f \vert_{1,\E} \vert \deltan - \Pinabla \deltan \vert_{1,\E}  + \Vert \f -\Piz \f \Vert_{0,\E} \Vert \deltan - \Piz \deltan \Vert_{0,\E}  \right)  \\
	& \le \sum_{\E \in \taun} \max(1,\beta^*) \hE^2 \vert \f - \Pinablapmo \f\vert_{1,\E} \vert \deltan - \Pinablapmo \deltan \vert_{1,\E} \\
	& \le \sum_{\E \in \taun} \max(1,\beta^*) \hE^2 \vert \f - \Pinablapmo \f\vert_{1,\E} \vert \deltan \vert_{1,\E}.\\
\end{split}
\]
For what concerns the second local term, Lemma~\ref{lemma:continuity:and:coercivity:an} again yields
\[
B^\E=\anE(\uI-u,\deltan) \le (\kcal^* +  \alpha^*(\p)) \vert u - \uI \vert_{1,\E} \vert \deltan \vert_{1,\E}.
\]
Regarding the third local term, we apply the definition of~\eqref{notation:for:bilinear:forms} and~\eqref{an}, respectively, the properties of the~$L^2$ projector, \eqref{property_K}, and~\eqref{stabilization:SE1}, getting
\[
\begin{split}
& C^\E=\anE(u, \deltan) - \aE(u,\deltan)	\\
& = (\K \Pizpmo \nabla u, \Pizpmo  \nabla \deltan)_{0,\E} + \SEo ((I-\Pinabla) u, (I-\Pinabla) \deltan) - (\K \nabla u, \nabla \deltan)_{0,\E}\\
& = (\K (\Pizpmo \nabla u - \nabla u), \Pizpmo  \nabla \deltan)_{0,\E} - (\K \nabla u - \Pizpmo (\K \nabla u), \nabla \deltan - \Pizpmo  \nabla \deltan)_{0,\E} \\
& \quad +\SEo ((I-\Pinabla) u, (I-\Pinabla) \deltan)\\
& \le \left(\kcal^* \Vert \nabla u - \Pizpmo \nabla u \Vert_{0,\E}  + \Vert \K \nabla u - \Pizpmo (\K \nabla u) \Vert_{0,\E} + \alpha^*(\p) \vert u - \Pinabla u \vert _{1,\E}\right) \vert \deltan \vert_{1,\E}.
\end{split}
\]
The fourth local term can be bounded using~\eqref{property_V} and~\eqref{bn}:
\[
\begin{split}
D^\E=\bnE(\uI-u, \deltan) \le \nu^* \Vert u - \uI \Vert _{0,\E} \Vert \deltan \Vert_{0,\E}.
\end{split}
\]
Eventually, we deal with the fifth local term, which can be bounded employing the definitions of~$\bnE$ and~$\bn$ in~\eqref{notation:for:bilinear:forms} and~\eqref{bn}, respectively, and~\eqref{property_V}:
\[
\begin{split}
E^\E=\bnE(u,\deltan) - \bE(u,\deltan) 	& = ( \V \Piz u, \Piz \deltan)_{0,\E} - (\V u, \deltan )_{0,\E} \\
						& = ( \V (\Piz u - u), \Piz \deltan)_{0,\E} - (\V u, \deltan -\Piz \deltan)_{0,\E}\\
						& \le (\nu^*\Vert u - \Piz u \Vert _{0,\E}  + \Vert \V u - \Piz \V u \Vert_{0,\E})\Vert \deltan\Vert_{0,\E}.\\
\end{split}
\]
Collecting the five bounds above in~\eqref{main:equation:abstract:error:analysis}, applying an $\ell^2$ Cauchy-Schwarz inequality, and applying a Poincar\'e inequality on~$\Omega$, yield
\[
\begin{split}
\min(\kcal_*, \alpha_*(\p)) & \vert \deltan \vert_{1,\Omega} \lesssim  \max(1,\beta^*) \hE^2 \vert \f - \Pinablapmo \f \vert_{1,\taun} + (\kcal^* + \alpha^*(\p)) (\vert u - \uI \vert_{1,\Omega})\\
				& + \kcal^* \Vert \nabla u - \Pizpmo \nabla u \Vert_{0,\taun} + \Vert \K\nabla u - \Pizpmo (\K\nabla u)\Vert_{0,\Omega} + \alpha^*(\p) \vert u - \upi \vert_{1,\taun} \\
				& + \nu^* \vert u - \uI \vert_{1,\Omega} + \nu^* \Vert u - \Piz u \Vert_{0,\Omega} +  \Vert \V u - \Piz (\V u) \Vert_{0,\Omega}.
\end{split}
\]
The assertion follows by noting that
\[
\vert u - \un \vert_{1,\Omega} \le \vert u - \uI \vert _{1,\Omega} + \vert \deltan \vert_{1,\Omega}.
\]
\end{proof}
Best polynomial approximation and best interpolation results entail the following theorem, which deals with the convergence rate of the error in the approximation of the source problem~\eqref{weak:formulation:continuous:source}.
\begin{thm} \label{theorem:convergence:source:problem}
Let~$u$ and~$\f$ be the solution and the right-hand side of problem~\eqref{weak:formulation:continuous:source}, and assume that they belong to~$H^1_0(\Omega)$ and~$H^1(\Omega)$, respectively,
and let their restriction on every element $\E\in \taun$ belong to $H^{s+1}(\E)$, $s\ge 0$.
Then, recalling that the coefficients~$\K$ and~$\V$ in~\eqref{weak:formulation:continuous:source} are piecewise analytic over~$\taun$, see assumption (\textbf{A}), it holds that
\[
\begin{split}
\vert	& u - \un \vert_{1,\Omega} \\
	& \lesssim \frac{\max(1,\kcal^* + \alpha^*(\p), \beta^*, \nu^*)}{\min(\kcal_*,\alpha_*(\p))} \frac{\h^{\min(\p,s)}}{\p^{s-1}} \left( \h^2\Vert \f \Vert_{s+1,\taun} + \Vert u \Vert_{s+1,\taun} + \Vert \V u \Vert_{s+1,\taun} + \Vert \K \nabla u \Vert_{s,\taun}  \right).
\end{split}
\]
\end{thm}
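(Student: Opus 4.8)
The plan is to feed the abstract error bound of Theorem~\ref{theorem:Tn:tends:to:T} into the $\h\p$-approximation estimates established earlier in the excerpt. The strategy is to estimate, term by term, each of the six summands on the right-hand side of that theorem, and to show that all of them are controlled by the single quantity $\frac{\h^{\min(\p,s)}}{\p^{s-1}}(\h^2\Vert\f\Vert_{s+1,\taun}+\Vert u\Vert_{s+1,\taun}+\Vert\V u\Vert_{s+1,\taun}+\Vert\K\nabla u\Vert_{s,\taun})$, after which the stability constants $\mu_1,\dots,\mu_5$ can be collected into the prefactor $\frac{\max(1,\kcal^*+\alpha^*(\p),\beta^*,\nu^*)}{\min(\kcal_*,\alpha_*(\p))}$.

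First I would handle the two ``genuinely virtual'' terms. For the interpolation error $\vert u-\uI\vert_{1,\Omega}$ I would invoke Corollary~\ref{corollary:best:interpolation:error} directly, obtaining the factor $\frac{\h^{\min(\p,s)}}{\p^{s-1}}(\sum_\E\Vert u\Vert_{s+1,\E}^2)^{1/2}$. For the best polynomial approximation $\vert u-\upi\vert_{1,\taun}$ I would apply Theorem~\ref{theorem:best:polynomial:error} with $\ell=1$, which gives $\frac{\hE^{\min(\p,s)}}{\p^{s}}\Vert u\Vert_{s+1,\E}$; note this is even one power of $\p$ better than the interpolation estimate, so it is absorbed. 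The remaining four terms are all $L^2$-projection errors of smooth quantities: $\Vert u-\Piz u\Vert_{0,\Omega}$, $\Vert\K\nabla u-\Pizpmo(\K\nabla u)\Vert_{0,\Omega}$, $\Vert\V u-\Piz(\V u)\Vert_{0,\Omega}$, and the $H^1$-seminorm term $\hE^2\vert\f-\Pinablapmo\f\vert_{1,\taun}$. Each is handled by Theorem~\ref{theorem:best:polynomial:error}: the $L^2$ errors with $\ell=0$, and since $\K$ and $\V$ are piecewise analytic by assumption (\textbf{A}), the products $\K\nabla u$ and $\V u$ inherit the Sobolev regularity of $u$ (here I would silently use that multiplication by an analytic function preserves $H^s$, at the cost of a constant hidden in $\lesssim$). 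The $\h^2$-weighted $\f$-term is where the $\h^2$ in the final statement originates.

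The key bookkeeping step is the power-counting in $\h$ and $\p$. The $L^2$-projection of $u$ with $\ell=0$ yields $\frac{\hE^{\min(\p,s)+1}}{\p^{s+1}}\Vert u\Vert_{s+1,\E}$, which carries an extra factor $\h/\p$ relative to the target rate $\frac{\h^{\min(\p,s)}}{\p^{s-1}}$; similarly the $\K\nabla u$ term, being an $L^2$ error of a quantity in $H^s$, gives $\frac{\hE^{\min(\p,s)}}{\p^{s}}\Vert\K\nabla u\Vert_{s,\E}$, again with a spare $1/\p$. Thus every term is, in fact, no worse than the dominant interpolation contribution, and collecting the stability constants $\max(\kcal_*^{-1},\alpha_*^{-1}(\p))$, $\max(1,\beta^*)$, $\kcal^*+\alpha^*(\p)$, and $\nu^*$ into the single ratio displayed in the statement completes the argument via an $\ell^2$ Cauchy--Schwarz inequality over the elements.

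I expect the main (though minor) obstacle to be the treatment of the variable-coefficient projection terms: one must argue that $\Vert\K\nabla u-\Pizpmo(\K\nabla u)\Vert_{0,\E}$ and $\Vert\V u-\Piz(\V u)\Vert_{0,\E}$ enjoy the same $\h\p$-rate as the pure $u$-terms, which relies on assumption (\textbf{A}) and on the stability of Sobolev norms under multiplication by piecewise-analytic functions; the constants from this step are what force the appearance of $\Vert\K\nabla u\Vert_{s,\taun}$ and $\Vert\V u\Vert_{s+1,\taun}$ on the right-hand side rather than norms of $u$ alone. Everything else is a routine substitution of Theorem~\ref{theorem:best:polynomial:error} and Corollary~\ref{corollary:best:interpolation:error} into the bound of Theorem~\ref{theorem:Tn:tends:to:T}.
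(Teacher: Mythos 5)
Your proposal is correct and takes essentially the same route as the paper, whose proof is precisely the one-line combination of Theorem~\ref{theorem:best:polynomial:error}, Corollary~\ref{corollary:best:interpolation:error}, and Theorem~\ref{theorem:Tn:tends:to:T} that you carry out term by term. Your power-counting in $\h$ and $\p$ for each of the six summands and the collection of the stability constants into the stated prefactor simply make explicit the details the paper leaves to the reader.
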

\begin{proof}
It suffices to combine Theorem~\ref{theorem:best:polynomial:error}, Corollary~\ref{corollary:best:interpolation:error}, and Theorem~\ref{theorem:Tn:tends:to:T}.
\end{proof}
It is clear from the estimate in Theorem~\ref{theorem:convergence:source:problem} that, whereas the $\h$-version of the method converges optimally,
the $\p$-version, whenever the solution to problem~\eqref{weak:formulation:continuous:source} has finite Sobolev regularity, it does not.
On the one hand, we showed in Corollary~\ref{corollary:best:interpolation:error} that in the enhanced VEM framework, the best interpolation estimates are suboptimal of one power;
on the other, one also has to pay additional powers of $\p$ due to the effects of the local stabilizations~$\SEo$ and~$\SEz$.

At any rate, if~$u$ is the restriction on~$\Omega$ of an analytic solution, the rate of convergence in terms of~$\p$, is exponential, as stated in the following result.
\begin{thm} \label{theorem:p:exponential:convergence}
Let~$u$ and~$\un$ be the solutions to~\eqref{weak:formulation:continuous:source} and~\eqref{VEM:source}, respectively; moreover, assume that~$u$ is the restriction on~$\Omega$ of an analytic function defined on a sufficiently large extension of~$\Omega$.
Then,
\[
\vert u - \un \vert_{1,\Omega} \lesssim  \exp(-b\, \p),
\]
for some positive constant~$b$ independent of the discretization parameters.
\end{thm}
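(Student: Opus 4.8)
The plan is to feed the finite-regularity convergence estimate of Theorem~\ref{theorem:convergence:source:problem} with the sharp analytic estimates on the right-hand side quantities, and then to optimize over the (currently free) Sobolev index~$s$. The key observation is that all the terms appearing in Theorem~\ref{theorem:convergence:source:problem} --- namely $\Vert \f \Vert_{s+1,\taun}$, $\Vert u \Vert_{s+1,\taun}$, $\Vert \V u \Vert_{s+1,\taun}$, and $\Vert \K \nabla u \Vert_{s,\taun}$ --- involve analytic functions (here we use both the analyticity of~$u$ and assumption~(\textbf{A}), which guarantees that~$\K$ and~$\V$ are piecewise analytic, so that the products $\V u$ and $\K \nabla u$ remain analytic element by element). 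For an analytic function~$w$ on a neighbourhood of~$\overline\Omega$, one has the standard Cauchy-type bound $\Vert w \Vert_{s+1,\taun} \lesssim C\, d^{s+1} (s+1)!$ for suitable constants $C,d>0$ depending on the analyticity radius but not on~$s$.

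First I would fix the mesh (we are performing the $\p$-version, so $\h$ is a constant of the problem and can be absorbed into the hidden constants), and treat the prefactor $\frac{\max(1,\kcal^*+\alpha^*(\p),\beta^*,\nu^*)}{\min(\kcal_*,\alpha_*(\p))}$ separately. Using the explicit bounds $\alpha^*(\p)\lesssim \p^2$ from~\eqref{bound:alpha} and $\alpha_*(\p)\gtrsim \p^{-5}$, this prefactor grows at most like a fixed power of~$\p$, say~$\p^{\kappa}$ for some~$\kappa$; since any algebraic factor is eventually dominated by an exponential decay, this is harmless. The heart of the matter is therefore to show that the remaining factor
\[
\frac{\h^{\min(\p,s)}}{\p^{s-1}} \, d^{s+1}(s+1)!
\]
can be made to decay exponentially in~$\p$ after choosing~$s$ appropriately as a function of~$\p$.

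The standard device, borrowed from the analytic-regularity theory of $\p$-FEM (see~\cite{SchwabpandhpFEM} and the VEM analogue~\cite{hpVEMbasic}), is to set $s \approx \p/(\eta\, e)$ for a suitable constant $\eta>1$ (so that $\min(\p,s)=s$) and to apply Stirling's formula $(s+1)! \lesssim (s+1)^{s+3/2} e^{-(s+1)}$. Substituting and collecting the terms of the form $(\text{const})^{s}$ and $s^{s}/\p^{s}$, one finds that the dominant behaviour is of the form $\exp(-b\,\p)$ for some $b>0$, because the factor $p^{-s}\,s^s\,e^{-s}$ together with the fixed-base exponentials $d^s\,\h^s$ yields, for $s$ proportional to~$\p$, a term $\exp(s\log(Cs/\p))=\exp(-b'\p)$ whenever the proportionality constant is chosen small enough that $Cs/\p<1$. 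The algebraic prefactor $\p^{\kappa}$ and the power $s^{3/2}$ from Stirling are then absorbed into the exponential by slightly decreasing~$b$.

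The main obstacle is the bookkeeping in this optimization: one must check that the \emph{same} choice of~$s$ simultaneously renders \emph{all four} analytic terms exponentially small, and that the algebraic pollution $\p^{\kappa}$ coming from the stabilization constants (the suboptimal factor of~$\p$ in Corollary~\ref{corollary:best:interpolation:error} together with the $\alpha^*(\p),\alpha_*(\p)$ dependence) does not destroy the exponential rate. Since the analyticity constants~$C$ and~$d$ are uniform over the fixed mesh and every factor other than $(s+1)!$ and $\p^{-s}$ is at most of fixed exponential base, the worst term dictates the rate, and the purely algebraic factors are swallowed by the exponential. This is exactly the mechanism by which the suboptimal power of~$\p$ flagged after Corollary~\ref{corollary:best:interpolation:error} becomes irrelevant in the analytic regime, giving $\vert u - \un \vert_{1,\Omega} \lesssim \exp(-b\,\p)$ as claimed.
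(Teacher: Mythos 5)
Your proposal is correct and takes essentially the same route as the paper: the paper's proof of Theorem~\ref{theorem:p:exponential:convergence} simply combines Theorem~\ref{theorem:convergence:source:problem} with the argument of~\cite[Section~5]{hpVEMbasic}, which is precisely the analytic-regularity optimization you perform (Cauchy-type bounds $\Vert w \Vert_{s+1,\taun}\lesssim C\, d^{s+1}(s+1)!$, the choice $s\approx \p/(\eta e)$ with Stirling's formula, and absorption of the algebraic $\p$-factors from the stabilization constants and the suboptimal interpolation estimate into the exponential). In effect, you have spelled out the details that the paper delegates to the citation.
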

\begin{proof}
Having at disposal Theorem~\ref{theorem:convergence:source:problem}, it suffices to apply the argument of~\cite[Section~5]{hpVEMbasic}.
\end{proof}
One of the main point behind the proof of Theorem~\ref{theorem:p:exponential:convergence} is that the algebraic losses in terms of~$\p$, due to the suboptimality of the best interpolation estimates and to the presence of the stabilizing parameters $\alpha_*(\p)$ and $\alpha^*(\p)$,
are absorbed in a term which is exponentially decreasing in~$\p$.
\medskip

It still remains open the issue of how to proceed in case the solution is not analytic, if one wants to recover some sort of exponential convergence of the error.
In fact, the instance of finite Sobolev regularity solutions will be numerically addressed in Section~\ref{section:numerical:results},
where the $\h\p$-version of the method will be considered, and exponential convergence in terms of the cubic root of the number of degrees of freedom will be shown.
A theoretical analysis of this approach is actually doable, e.g. following the lines of~\cite[Section 5]{hpVEMcorner}, but is not addressed in the present paper.

In the remainder of Section~\ref{section:convergence:analysis}, we assume therefore that~$u$ fulfills the assumptions of Theorem~\ref{theorem:p:exponential:convergence}.
Moreover, we will focus on the $\p$-version only, since the $\h$-analysis was the topic of~\cite{VEM_eig_Schroedinger}.

\subsection{$\p$-spectral approximation for compact operators} \label{subsection p spectral approximation for compact operators}
We proceed here with the convergence analysis of the eigenfunctions and the eigenvalues of the solution operator~$\T$.
We will employ the tools of the Babu\v ska-Osborn theory for compact operators~\cite{BabuskaOsborn}.

More precisely, given~$\T$ the solution operator associated with the problem~\eqref{weak:formulation:continuous:source},
and~$\Tn$, the sequence of compact solution operators associated with the method~\eqref{VEM:source}, the condition
\begin{equation} \label{condition:T:Tn}
\Vert \T - \Tn \Vert_{\mathcal L(H^1(\Omega))} \longrightarrow 0 \quad \text{as}\quad n\longrightarrow +\infty,
\end{equation}
is sufficient, see~\cite[Proposition 7.4]{boffiAN}, see also~\cite{BabuskaOsborn}, in order to get the two following facts:
assuming that~$\h$ and~$\p$ are sufficiently small and large, respectively,
\begin{itemize}
\item given~$\lambda$ an exact eigenvalue in~\eqref{continuous:problem:weak} with multiplicity~$m$, method~\eqref{VEM:eigenvalue} provides precisely~$m$ discrete eigenvalues converging to~$\lambda$;
\item given~$\lambda_n$ a discrete eigenvalue, $\lambda_n$ converges to one continuous eigenvalue.
\end{itemize}
Importantly, condition~\eqref{condition:T:Tn} is also necessary in order to prove spectral approximation properties, see~\cite{boffi2000problem}.

Thus, we begin with the following result, which provides~\eqref{condition:T:Tn} for the $\p$-version of VEM, in case the solution to the source problem \eqref{weak:formulation:continuous:source} is the restriction of an analytic function over a sufficiently large extension of the domain~$\Omega$.
\begin{lem} \label{lemma:approximation:solution:operator}
Under the assumptions of Theorem~\ref{theorem:p:exponential:convergence}, it holds that
\[
\Vert \T - \Tn \Vert_{\mathcal L(H^1(\Omega))} \lesssim \exp(-b \, \p),
\]
for some positive constant~$b$ independent of the discretization parameters.
\end{lem}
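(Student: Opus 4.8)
The plan is to unfold the definition of the operator norm and reduce the statement to the $\p$-exponential convergence already established for the source problem. By definition,
\[
\Vert \T - \Tn \Vert_{\mathcal L(H^1(\Omega))} = \sup_{\f \in H^1(\Omega),\, \f \ne 0} \frac{\Vert (\T - \Tn)\f \Vert_{1,\Omega}}{\Vert \f \Vert_{1,\Omega}},
\]
and, for a fixed datum $\f$, the numerator equals $\Vert u - \un \Vert_{1,\Omega}$, where $u = \T \f$ and $\un = \Tn \f$ solve the continuous~\eqref{weak:formulation:continuous:source} and the discrete~\eqref{VEM:source} source problems, respectively. Since both $u$ and $\un$ belong to $H^1_0(\Omega)$, so does their difference, and a Poincar\'e inequality on $\Omega$ gives $\Vert u - \un \Vert_{1,\Omega} \lesssim \vert u - \un \vert_{1,\Omega}$. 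It therefore suffices to bound the seminorm $\vert u - \un \vert_{1,\Omega}$ linearly in $\Vert \f \Vert_{1,\Omega}$ by the quantity governed by Theorem~\ref{theorem:p:exponential:convergence}.

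First I would apply Theorem~\ref{theorem:p:exponential:convergence} (which itself rests on Theorem~\ref{theorem:convergence:source:problem}) to obtain $\vert u - \un \vert_{1,\Omega} \lesssim \exp(-b\,\p)$ for some $b>0$ independent of the discretization parameters. The point requiring care is that this bound must be made proportional to $\Vert \f \Vert_{1,\Omega}$, so that after dividing and taking the supremum the constant survives. Here the structure of Theorem~\ref{theorem:convergence:source:problem} is decisive: its right-hand side is controlled by the element-wise Sobolev norms of $u$, of $\K \nabla u$, and of $\V u$; under assumption~(\textbf{A}), together with the standing analyticity hypothesis made just before the lemma, these are dominated by the analytic-regularity constants of $u$, which in turn are controlled by $\Vert \f \Vert_{1,\Omega}$ through the a priori estimate for the source operator $\T$. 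Consequently the bound upgrades to $\vert u - \un \vert_{1,\Omega} \lesssim \exp(-b\,\p)\,\Vert \f \Vert_{1,\Omega}$, and dividing by $\Vert \f \Vert_{1,\Omega}$ and passing to the supremum over $\f$ yields the claim.

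The hard part is precisely this uniformity in $\f$: I must ensure that the analytic regularity of $u = \T \f$, and hence both the rate $b$ and the implied constant in the exponential, are controlled by $\Vert \f \Vert_{1,\Omega}$ uniformly and do not degenerate as $\f$ ranges over the unit ball of $H^1(\Omega)$. This hinges on an analytic a priori estimate for the elliptic source problem with the piecewise-analytic coefficients $\K$ and $\V$ of assumption~(\textbf{A}), namely that $\T$ maps the admissible data into the class of analytic functions with analytic-type derivative bounds scaling linearly with $\Vert \f \Vert_{1,\Omega}$; this is exactly the content of the standing regularity assumption under which the whole section operates. As in Theorem~\ref{theorem:p:exponential:convergence}, the several algebraic powers of $\p$ entering through the suboptimal interpolation estimate of Corollary~\ref{corollary:best:interpolation:error} and through the stabilization constants $\alpha_*(\p)$, $\alpha^*(\p)$, and $\beta_*(\p)$ must then be absorbed into the exponentially decaying factor; I would reuse verbatim the absorption argument of~\cite[Section~5]{hpVEMbasic} invoked there, which is what keeps the final rate genuinely exponential and the constant $b$ independent of~$\p$.
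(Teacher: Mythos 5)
Your proof follows the paper's argument exactly: unfold the operator norm over the unit ball of $H^1(\Omega)$, pass from the full norm $\Vert u - \un \Vert_{1,\Omega}$ to the seminorm $\vert u - \un \vert_{1,\Omega}$ via a Poincar\'e inequality (legitimate since $u - \un \in H^1_0(\Omega)$), and invoke Theorem~\ref{theorem:p:exponential:convergence}. The uniformity-in-$\f$ point you raise --- that the hidden constant in the exponential bound must scale linearly with $\Vert \f \Vert_{1,\Omega}$, which requires the analytic regularity of $u = \T\f$ to be controlled uniformly as $\f$ ranges over the unit ball --- is left implicit in the paper's one-line proof, so your extra care there is a sharpening of the same argument rather than a different route.
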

\begin{proof}
It suffices to observe that
\[
\begin{split}
\Vert \T - \Tn \Vert_{\mathcal L(H^1(\Omega))} 	& = \sup_{\f \in H^1(\Omega), \Vert \f \Vert_{1,\Omega}=1} \Vert \T \f - \Tn \f \Vert_{1,\Omega} = \sup_{\f \in H^1(\Omega), \Vert \f \Vert_{1,\Omega}=1} \Vert u -\un \Vert_{1,\Omega}\\
& \vspace{0.5mm}\\
									& \le (1+c_P(\Omega))  \sup_{\f \in H^1(\Omega), \Vert \f \Vert_{1,\Omega}=1} \vert u - \un \vert_{1,\Omega},\\
\end{split}
\]
where $c_P(\Omega)$ is the Poincar\'e constant on $\Omega$, and then apply Theorem~\ref{theorem:p:exponential:convergence}.
\end{proof}
The convergence rate for eigenfunctions is a consequence of Lemma~\ref{lemma:approximation:solution:operator} and the Babu\v ska-Osborn theory.
In particular, given~$\lambda$ a continuous eigenvalue with multiplicity~$m$, and given $\lambda_{1,n}$, \dots, $\lambda_{m,n}$ the associated discrete eigenvalues,
we first introduce the gap between $\mathcal E_{\lambda, n}$ (the direct sum of the eigenspaces generated by $\lambda_{1,n}$, \dots, $\lambda_{m,n}$) and~$\mathcal E_\lambda$ (the eigenspace generated by $\lambda$):
\begin{equation} \label{gap}
\widehat \delta (\mathcal E_\lambda,  \mathcal E_{\lambda,n}) = \max \left( \delta (\mathcal E_\lambda,  \mathcal E_{\lambda,n}), \delta (\mathcal E_{\lambda,n}, \mathcal E_\lambda)   \right),
\end{equation}
where we are using the notation
\[
\delta (\mathbf X, \mathbf Y) = \sup _{x\in \mathbf X, \, \Vert x \Vert_{1,\Omega}=1} \left( \inf_{y\in \mathbf Y} \Vert x-y \Vert_{1,\Omega}   \right).
\]
The following bound on the gap~$\widehat \delta (\mathcal E_\lambda,  \mathcal E_{\lambda,n}) $ is valid.
\begin{thm} \label{theorem:convergence:eigenfunctions}
Let~$u$, the continuous eigenfunction corresponding to the eigenvalue~$\lambda$, be the restriction on~$\Omega$ of
an analytic function defined on a sufficiently large extension of~$\Omega$, then it holds true that
\[
\widehat \delta (\mathcal E_\lambda,  \mathcal E_{\lambda,n}) \lesssim \exp(-b \p),
\]
for some positive constant~$b$ independent of the discretization parameters.
\end{thm}
\begin{proof}
The assertion follows from the Babu\v ska-Osborn theory~\cite[Theorem 7.1 and 7.3]{BabuskaOsborn}, which states that
\[
\widehat \delta (\mathcal E_\lambda,  \mathcal E_{\lambda,n}) \lesssim  \Vert (\T - \Tn)_{|\mathcal E_\lambda} \Vert_{\mathcal L(H^1(\Omega))},
\]
and applying Lemma~\ref{lemma:approximation:solution:operator}.
\end{proof}
\begin{remark} \label{remark:convergence:eigenfunctions}
Owing to the definition~\eqref{gap}, Theorem~\ref{theorem:convergence:eigenfunctions} entails the $\p$-exponential convergence of the eigenfunctions.
\end{remark}
Finally, we address the convergence of the eigenvalues.
\begin{thm} \label{theorem:convergence:eigenvalues}
Under the assumptions of Theorem~\ref{theorem:p:exponential:convergence},
let $\lambda_n$ be an eigenvalue of problem~\eqref{VEM:eigenvalue}, converging to the eigenvalue~$\lambda$ of problem~\eqref{continuous:problem:weak}.
Then, it holds true that
\[
\vert \lambda_n - \lambda \vert \lesssim \exp(-b \, \p),
\]
for some positive constant~$b$ independent of the discretization parameters.
\end{thm}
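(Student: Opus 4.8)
The plan is to reduce the eigenvalue estimate to the level of the solution operators and then exploit the operator-norm bound already obtained in Lemma~\ref{lemma:approximation:solution:operator}. Recall from the discussion following~\eqref{continuous:problem:weak} and~\eqref{VEM:eigenvalue} that $(\lambda,u)$ solves the continuous eigenvalue problem if and only if $(\mu,u)$, with $\mu:=1/\lambda$, is an eigenpair of $\T$, and analogously $(\lambda_n,\un)$ solves the discrete problem if and only if $(\mu_n,\un)$, with $\mu_n:=1/\lambda_n$, is an eigenpair of $\Tn$. Therefore it suffices to estimate $|\mu-\mu_n|$ and then transfer the bound back to $|\lambda-\lambda_n|$.

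First I would invoke the Babu\v ska-Osborn theory for compact self-adjoint operators~\cite[Theorem 7.3]{BabuskaOsborn} (see also~\cite[Section 7]{boffiAN}), exactly as in the proof of Theorem~\ref{theorem:convergence:eigenfunctions}. This provides the abstract eigenvalue estimate
\[
|\mu - \mu_n| \lesssim \Vert (\T - \Tn)_{|\mathcal E_\lambda} \Vert_{\mathcal L(H^1(\Omega))}.
\]
Since $\T$ and $\Tn$ are both self-adjoint and positive definite, one could even replace the right-hand side by its square, but this refinement is immaterial for an exponential rate. Bounding the restricted norm by the full operator norm and applying Lemma~\ref{lemma:approximation:solution:operator} then yields
\[
|\mu - \mu_n| \lesssim \Vert \T - \Tn \Vert_{\mathcal L(H^1(\Omega))} \lesssim \exp(-b\,\p).
\]

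It remains to pass from $\mu,\mu_n$ back to $\lambda,\lambda_n$. I would write
\[
|\lambda - \lambda_n| = \left| \frac{1}{\mu} - \frac{1}{\mu_n}\right| = \frac{|\mu-\mu_n|}{\mu\,\mu_n},
\]
and observe that, since $\lambda_n\to\lambda$ by hypothesis, the quantities $\mu_n=1/\lambda_n$ are bounded away from zero uniformly in $n$; hence $\mu\,\mu_n$ is bounded below by a positive constant independent of the discretization parameters, and the claimed bound $|\lambda-\lambda_n|\lesssim\exp(-b\,\p)$ follows.

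I expect the only delicate point to be the justification of the abstract estimate in the presence of the \emph{variational crimes} committed by the discrete forms $\an$, $\bn$, $\cn$, which are not the exact restrictions of $\a$, $\b$, $\c$; consequently $\Tn$ is the solution operator of a perturbed problem rather than a Galerkin projection of $\T$. This is precisely why the whole convergence analysis has been phrased at the operator level through condition~\eqref{condition:T:Tn}: once $\Vert\T-\Tn\Vert_{\mathcal L(H^1(\Omega))}\to 0$ has been secured in Lemma~\ref{lemma:approximation:solution:operator}, the Babu\v ska-Osborn machinery applies verbatim and the consistency errors are already subsumed into the operator-norm difference.
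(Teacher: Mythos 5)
Your proposal is correct, but it takes a genuinely different route from the paper. The paper proves Theorem~\ref{theorem:convergence:eigenvalues} at the level of the bilinear forms, following~\cite[Theorem 4.3]{VEM_vibration_Kirchhoff}: for a discrete eigenpair $(\lambdan,\wn)$ and the corresponding exact one $(\lambda,w)$, it derives the algebraic identity
\[
(\lambdan-\lambda)\,\c(\wn,\wn) = \a(w-\wn,w-\wn)+\b(w-\wn,w-\wn)-\lambda\,\c(w-\wn,w-\wn)
+\big[\an-\a\big](\wn,\wn)+\big[\bn-\b\big](\wn,\wn)-\lambdan\big[\cn-\c\big](\wn,\wn),
\]
then bounds the first three terms via the eigenfunction convergence (Remark~\ref{remark:convergence:eigenfunctions}) and the consistency defects with the tools of Theorem~\ref{theorem:Tn:tends:to:T}. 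You instead stay entirely at the operator level: pass to the reciprocal eigenvalues $\mu=1/\lambda$, $\mu_n=1/\lambdan$ of $\T$ and $\Tn$ (a reduction the paper itself licenses), invoke the abstract Babu\v ska--Osborn eigenvalue estimate $|\mu-\mu_n|\lesssim \Vert(\T-\Tn)_{|\mathcal E_\lambda}\Vert_{\mathcal L(H^1(\Omega))}$ --- legitimate here since $\T$ is self-adjoint with respect to $\B$, so the ascent of $\mu$ is one --- apply Lemma~\ref{lemma:approximation:solution:operator}, and transfer back via $|\lambda-\lambdan|=|\mu-\mu_n|/(\mu\,\mu_n)$ with $\mu\,\mu_n$ bounded below because $\lambdan\to\lambda$. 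Your observation about the variational crimes being subsumed into the operator-norm difference is exactly right and is what makes the short argument sound. What each approach buys: the paper's identity makes the consistency errors explicit and is the device that, in the $\h$-version, produces the doubled convergence rate for eigenvalues (every term on the right-hand side is either quadratic in the error or a consistency defect), which is why the paper remarks that the double rate is ``hidden within the exponential convergence rate''; your operator route is shorter but delivers only the single rate, which, as you note, is immaterial for an exponential bound. One small caution: your parenthetical claim that self-adjointness would let you square the right-hand side is not immediate, since $\T$ and $\Tn$ are self-adjoint with respect to the \emph{different} inner products $\B$ and $\Bn$; recovering the squared term in the presence of variational crimes requires the consistency-sum version of the Babu\v ska--Osborn estimate rather than the plain symmetric one --- but since you explicitly discard this refinement, it does not affect the validity of your proof.
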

\begin{proof}
The proof follows the line e.g. of that of~\cite[Theorem 4.3]{VEM_vibration_Kirchhoff}.

Let~$\wn$ be a discrete eigenfunction associated with the discrete eigenvalue~$\lambda_n$, and let $w$ and $\lambda$ be the corresponding exact eigenfunction and eigenvalue, respectively.
Then, one has
\[
\begin{split}
& \a(w-\wn, w-\wn) + \b(w-\wn, w-\wn) - \lambda \c(w-\wn, w-\wn)\\
& = \a(\wn, \wn) + \b(\wn, \wn) - \lambda \c(\wn, \wn)\\
& = \a(\wn, \wn) + \b(\wn, \wn) -\an(\wn, \wn) - \bn(\wn, \wn) + \lambda_n \cn(\wn, \wn) - \lambda\c(\wn, \wn)  . \\
\end{split}
\]
Noting that
\[
\lambda_n \cn(\wn, \wn) - \lambda\c(\wn, \wn)  =\lambda_n \left(  \cn(\wn, \wn) - \c(\wn,\wn)    \right) + (\lambda_n-\lambda) \c(\wn,\wn),
\]
we deduce
\[
\begin{split}
(\lambda_n-\lambda) \c(\wn,\wn)	& = \a(w-\wn, w-\wn) + \b(w-\wn,w-\wn) - \lambda\c(w-\wn,w-\wn)\\
							& \quad + \an(\wn, \wn) - \a(\wn, \wn) + \bn(\wn, \wn) - \b(\wn, \wn)\\
							& \quad - \lambda_n \left[ \cn(\wn, \wn) - \c(\wn, \wn)   \right].\\
\end{split}
\]
The assertion follows by bounding the terms on the right-hand side (with tools similar to those employed in the proof of Theorem~\ref{theorem:Tn:tends:to:T})
and then using the convergence of the eigenfunctions discussed in Remark~\ref{remark:convergence:eigenfunctions}.
\end{proof}
We note that in the standard $\h$-analysis of the convergence of the eigenvalues one typically gets a double rate of convergence.
Since here we are focusing on the $\p$-version on the case of eigenfunctions being the restriction of analytic functions only,
the double rate of convergence is actually hidden within the exponential convergence rate.

\section{Numerical results} \label{section:numerical:results}
In this section, we present a number of numerical experiments that validate the exponential convergence in terms of the degree of accuracy~$\p$ of the discrete eigenvalues to the continuous ones,
whenever the corresponding eigenfunctions are the restriction over the physical domain of analytic functions, see Section~\ref{subsection:p-version}.
Instead, in Section~\ref{subsection:hp-version}, we tackle the instance of eigenfunctions possibly having finite Sobolev regularity, by means of the $\h\p$-version of the method,
and we show that the discrete eigenvalues converge to the continuous ones exponentially in terms of the cubic root of the number of degrees of freedom.


In particular, we are interested in the convergence rate of the normalized error
\begin{equation} \label{quantity:of:interest}
\frac{\vert \lambda - \lambda_ n \vert}{\vert \lambda \vert},
\end{equation}
where~$\lambda$ is a continuous eigenvalues in~\eqref{continuous:problem:weak} and~$\lambda_n$ is a discrete eigenvalue in~\eqref{VEM:eigenvalue} associated with~$\lambda$.
\medskip

In the forthcoming numerical experiments, we could employ the stabilizations introduced in Section~\ref{subsubsection:stabilizations}.
The reason why we picked~$\SEo$ and~$\SEz$ in~\eqref{explicit:stabilization:SE1} and~\eqref{explicit:stabilization:S0}, respectively,
is that we can prove explicit bounds in terms of the ``polynomial'' degree~$\p$ on the parameters $\alpha_*(\p)$, $\alpha^*(\p)$, $\beta_*(\p)$, and $\beta^*$.
A possible effective alternative to $\SEo$ is provided by  the so-called diagonal-recipe stabilizations, defined as
\begin{equation} \label{D-recipe:stabilizations}
\begin{split}
& \widetilde S^\E_1 (\varphi_j, \varphi_i) = \max (1, \aE(\Pinabla \varphi_i, \Pinabla \varphi_j)) \delta_{i,j},\\
\end{split}
\end{equation}
where we recall that $\{ \varphi_i  \}_{i=1}^{\dim(\VnE)}$ is the canonical basis of $\VnE$, and where~$\delta_{i,j}$ denotes the Kronecker delta.

Such stabilization was introduced in~\cite{VEM3Dbasic} and its performance was investigated in~\cite{fetishVEM} and~\cite{fetishVEM3D} in the approximation of a 2D and a 3D Poisson problem, respectively.
If compared to other stabilizing bilinear forms, it entails more robust performance of the method for high ``polynomial'' degree and in presence of distorted or with bad aspect ratio elements.
For this reason, we will employ~$\widetilde S^\E_1$ in~\eqref{D-recipe:stabilizations} and $\SEz$ in~\eqref{explicit:stabilization:S0} as stabilizations for the method.
\medskip

For what concerns the choice of the polynomial basis~$\{\malpha\}_{\alpha=1}^{\pi_{\p-2}}$ dual to the internal moments~\eqref{internal:moments}, we fix an $L^2(\E)$ orthonormal basis elementwise.
In fact, as analyzed in~\cite{fetishVEM}, this choice is particularly effective when the ``polynomial'' degree of the method is high and when the elements are distorted and/or have a bad aspect ratio.
Such a basis is constructed by a stable piecewise $L^2$ orthonormalization process applied to the basis of monomials, elementwise shifted with respect to the barycenter of the element.


\subsection{$\p$-version: the case of analytic eigenfunctions} \label{subsection:p-version}
In this section, we fix our attention to the performance of the $\p$-version of the method in the case of analytic eigenfunctions.

\paragraph*{\texttt{Test case 1}: Laplace on square domain}
As a first test case, we consider as a physical domain the unit square $\Omega_1=(0,1)^2$, partitioned into sequences of Voronoi meshes, see e.g.~\cite{spatialtesselations}.

We aim to approximate the eigenvalues of the Laplace operator on~$\Omega_1$, i.e., we pick $\K = \mathbb {Id}$ (that is, the identity matrix) and $\V = 0$ in~\eqref{continuous:problem:strong}.
The eigenvalues are explicitly known and are given by $\lambda_{k_1,k_2}= (k_1^2 + k_2^2) \pi^2$ for all~$k_1$ and~$k_2$ in~$\mathbb N$ such that~$k_1+k_2\ne 0$.

From Theorem~\ref{theorem:convergence:eigenvalues} and from Remark~\ref{remark:convergence:eigenfunctions}, since all the eigenfunctions are the restriction of analytic functions over~$\mathbb R^2$,
as they have the form $\sin(k_1 \pi x) \sin(k_2 \pi y)$ for all~$k_1$ and~$k_2$ in~$\mathbb N$ such that~$k_1+k_2\ne 0$,
the discrete eigenvalues converge exponentially to the continuous ones in terms of~$\p$,
and therefore in terms of the square root of the number of degrees of freedom, to the continuous ones.

In Figure~\ref{figure:testcase1}, we plot the error~\eqref{quantity:of:interest} against the square root of the number of degrees of freedom for the first four eigenvalues.
We employ the $\h$-version for $\p=1$, 2, and~$3$, and the $\p$-version of the method.
For the $\p$-version, we employ a Voronoi mesh corresponding with the coarsest mesh of the $\h$-version.
On the $x$-axis, we plot the square root of the number of degrees of freedom.
\begin{figure}  [h]
\centering
\includegraphics [angle=0, width=0.45\textwidth]{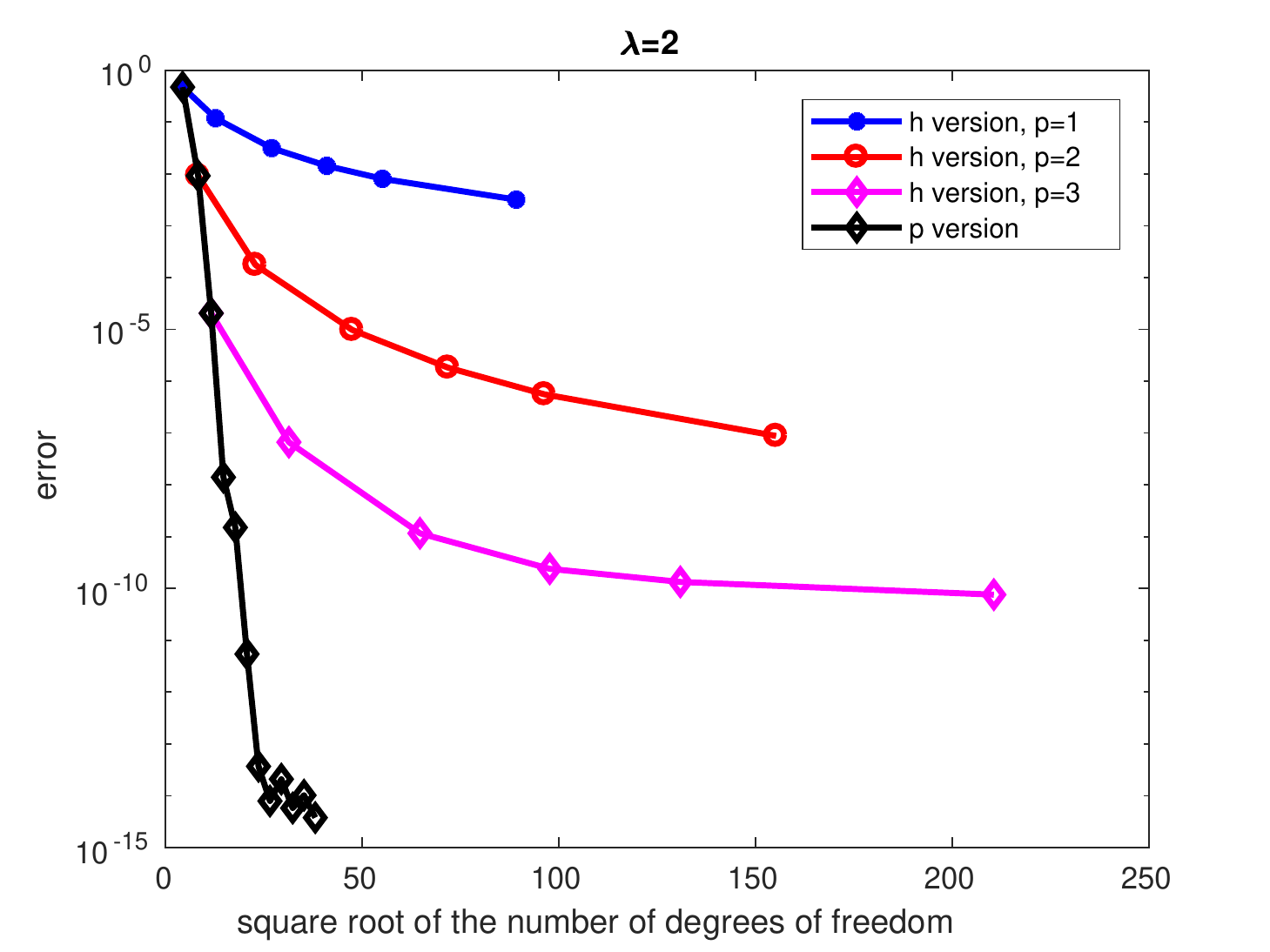}
\includegraphics [angle=0, width=0.45\textwidth]{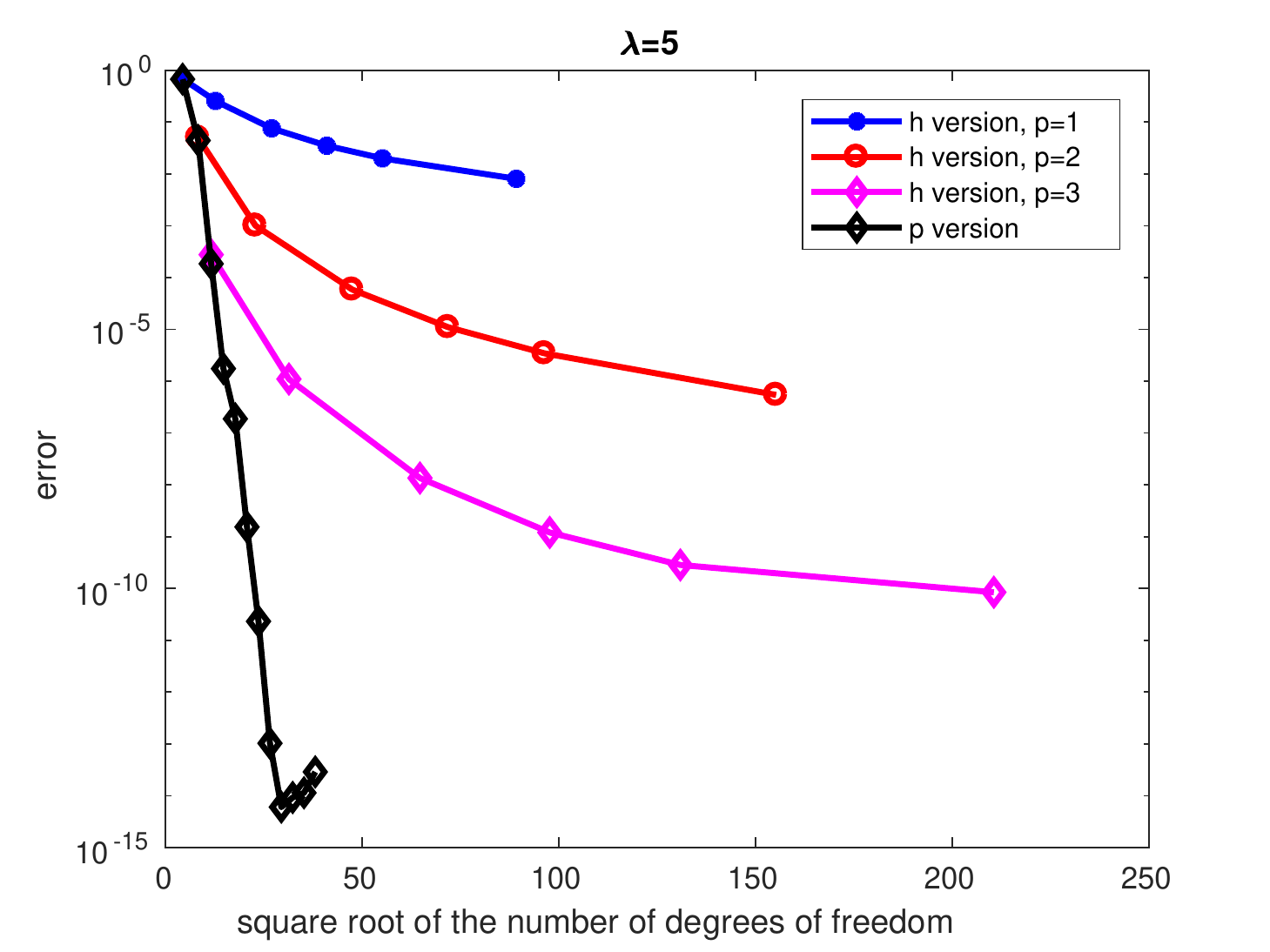}
\includegraphics [angle=0, width=0.45\textwidth]{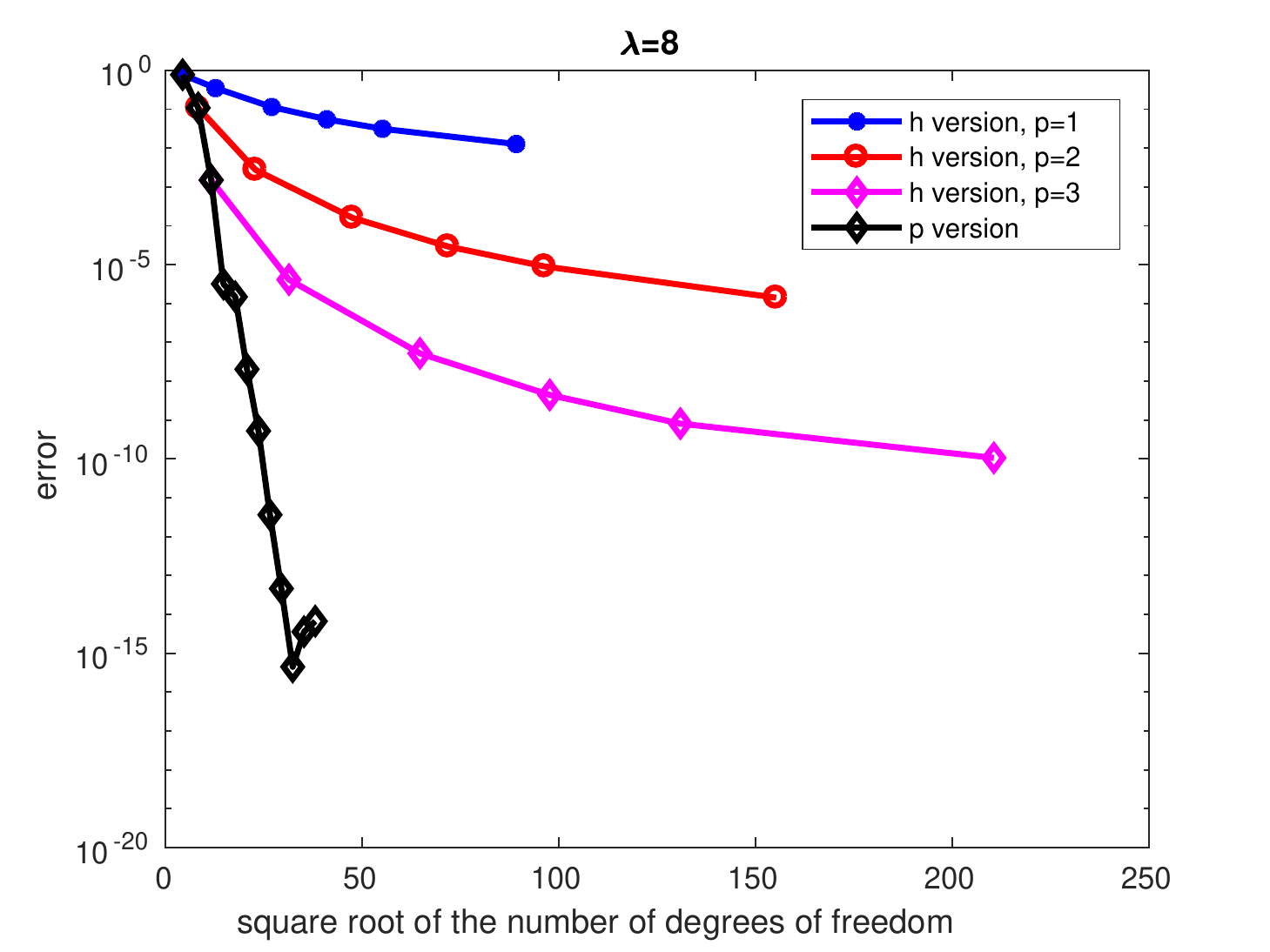}
\includegraphics [angle=0, width=0.45\textwidth]{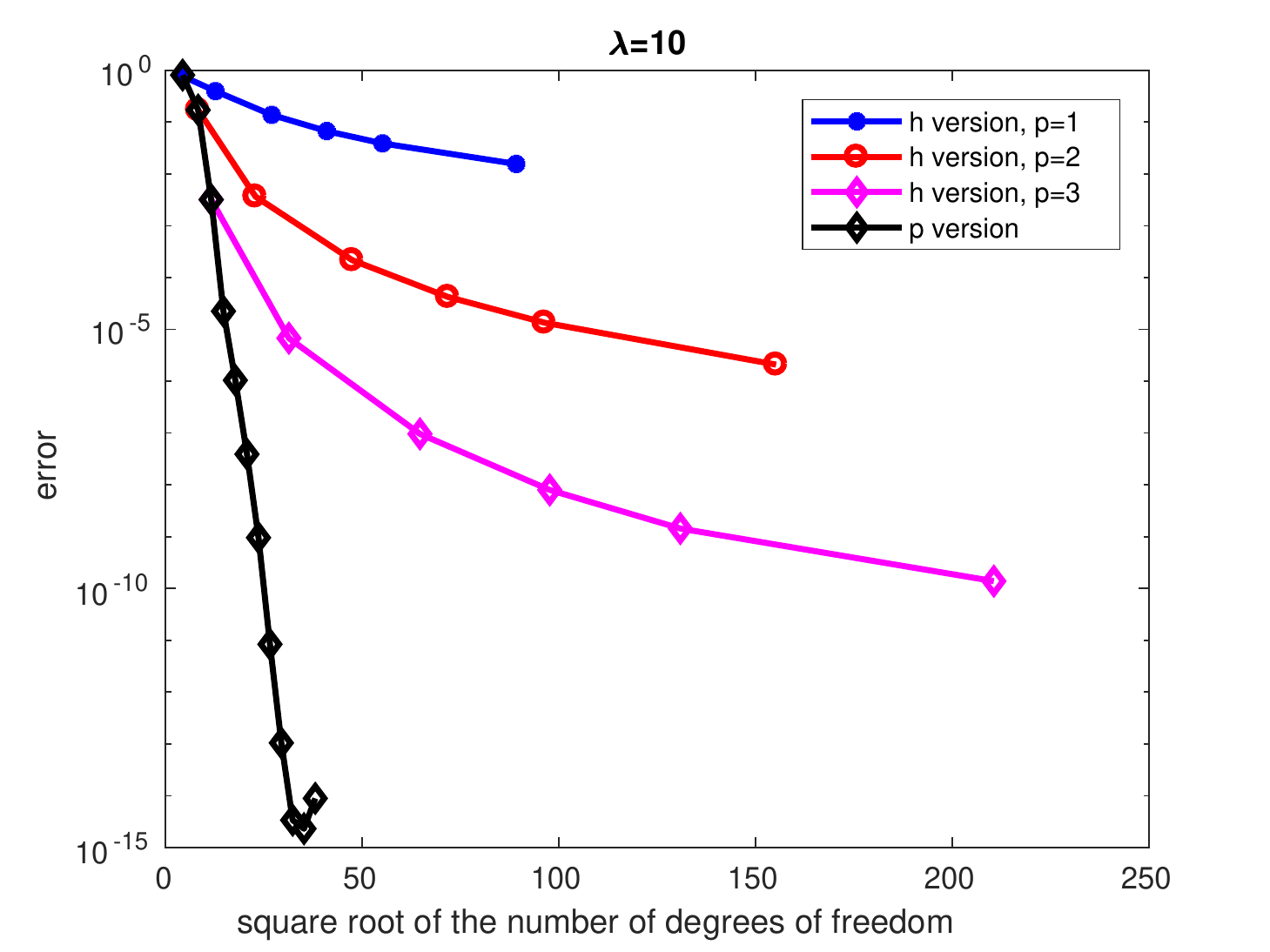}
\caption{Convergence of the error~\eqref{quantity:of:interest} for
    the first four distinct Dirichlet eigenvalues of the Laplace
    operator on the square domain~$\Omega_1$ employing the
    $\h$-version with $\p=1$, $2$, and~$3$ and the $\p$-version of the
    method.      
    On the $x$-axis, we plot the square root of the number of degrees
    of freedom.  
    The stabilizations~$\widetilde S_1^\E$ and~$\SEz$ are defined
    in~\eqref{D-recipe:stabilizations} and~\eqref{explicit:stabilization:S0}, respectively.  
    The polynomial basis dual to the internal moments~\eqref{internal:moments} is $L^2$ orthonormal elementwise.  
    For both the $\h$- and the $\p$-versions, we employ Voronoi
    meshes.  
     The error curves plotted in the figure refer to the approximation of the problem eigenvalues as follows:
      \textit{top-left panel:} first eigenvalue; 
      \textit{top-right panel:} second eigenvalue;
      \textit{bottom-left panel:} third eigenvalue;
      \textit{bottom-right panel:} fourth eigenvalue.
 }
\label{figure:testcase1}
\end{figure}

\paragraph*{\texttt{Test case 2}: quantum harmonic oscillator on a square}
Another interesting test case with analytic eigenfunctions is provided
by the quantum harmonic oscillator~\cite{Heisenberg:1925,Griffiths:1995}, that is, when one
fixes~$\K=0.5\mathbb I$ (that is, again, the identity matrix) and
$\V(x,y) = 0.5 (x^2+y^2)$ in~\eqref{continuous:problem:strong}.

The eigenfunctions on~$\mathbb R^2$ are given by the product of the
Gaussian bell $\exp(-(x^2+y^2))$ with a tensor product of Hermite
polynomials.
The eigenvalues are all the natural numbers; every eigenvalue $n\in
\mathbb N$ has multiplicity precisely equal to~$n$.

The eigenfunctions of the quantum harmonic oscillator have not zero boundary conditions on bounded domains; however, they decrease rapidly to zero as~$x$ and~$y$ tend to infinity.
For this reason, we consider as a physical domain, the (sufficiently wide) square $\Omega_2=(-10,10)^2$, and we impose zero boundary conditions, assuming that the resulting eigenfunctions and eigenvalues
are practically given by those in the unbounded domain $\mathbb R^2$.
Again, we compare the performance of the $\h$-version with $\p=1$, $2$, and~$3$ with the $\p$-version of the method on Voronoi meshes.
For the $\p$-version, we employ a Voronoi mesh corresponding with the second coarsest mesh of the $\h$-version.
On the $x$-axis, we consider the square root of the number of degrees of freedom, which we recall is a consequence of Theorem~\ref{theorem:convergence:eigenvalues}.
\begin{figure}  [h]
\centering
\includegraphics [angle=0, width=0.45\textwidth]{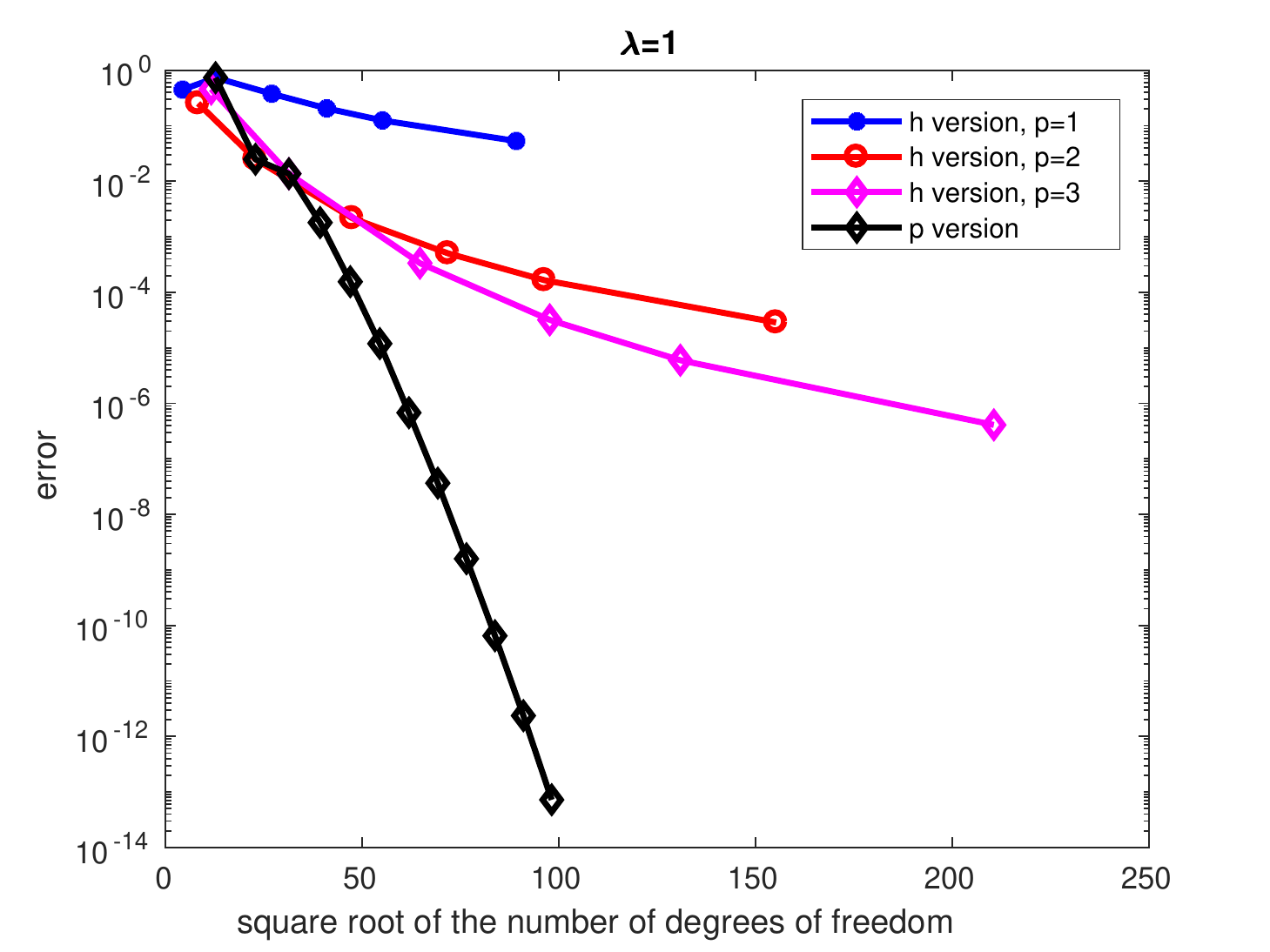}
\includegraphics [angle=0, width=0.45\textwidth]{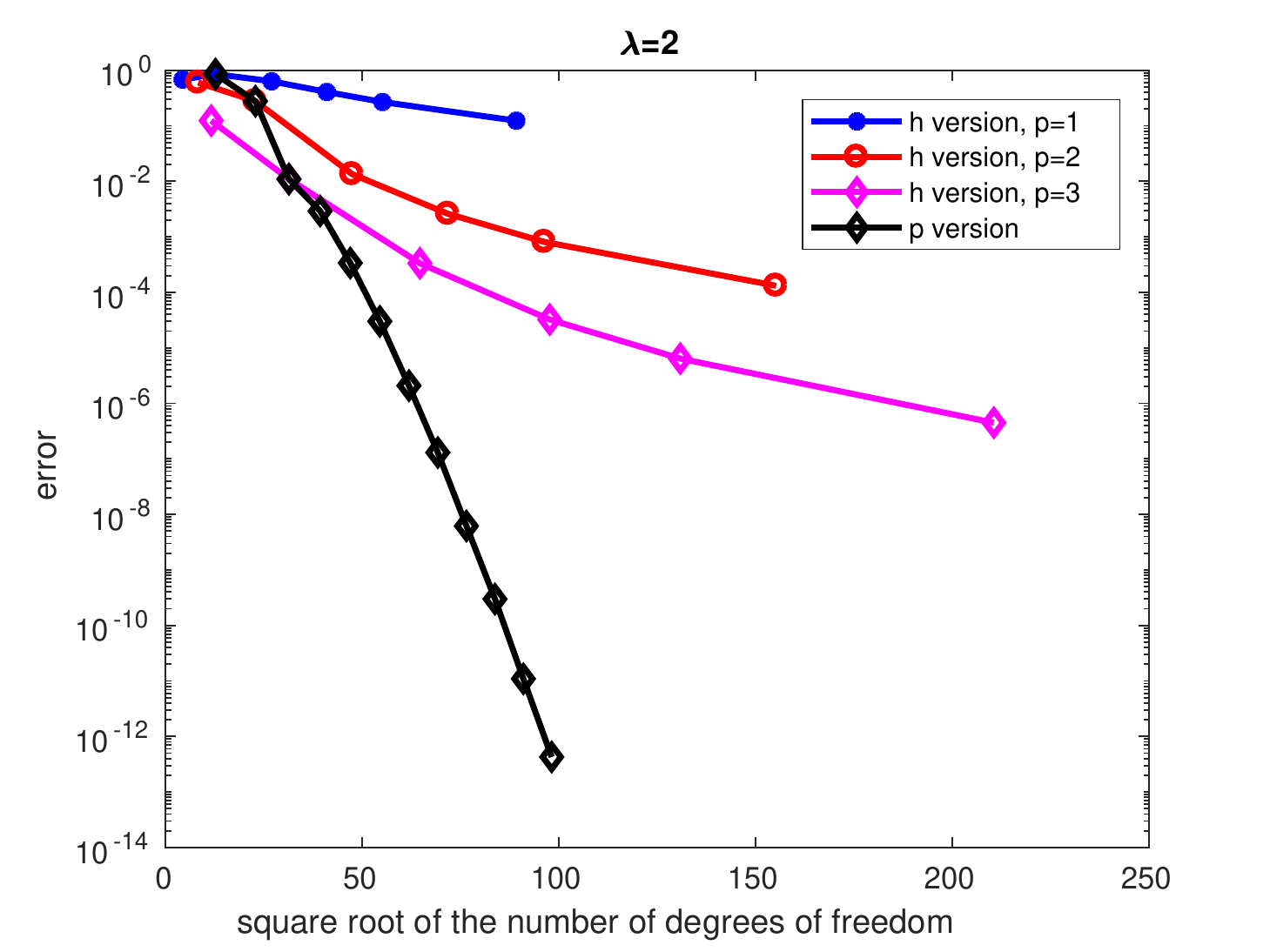}
\includegraphics [angle=0, width=0.45\textwidth]{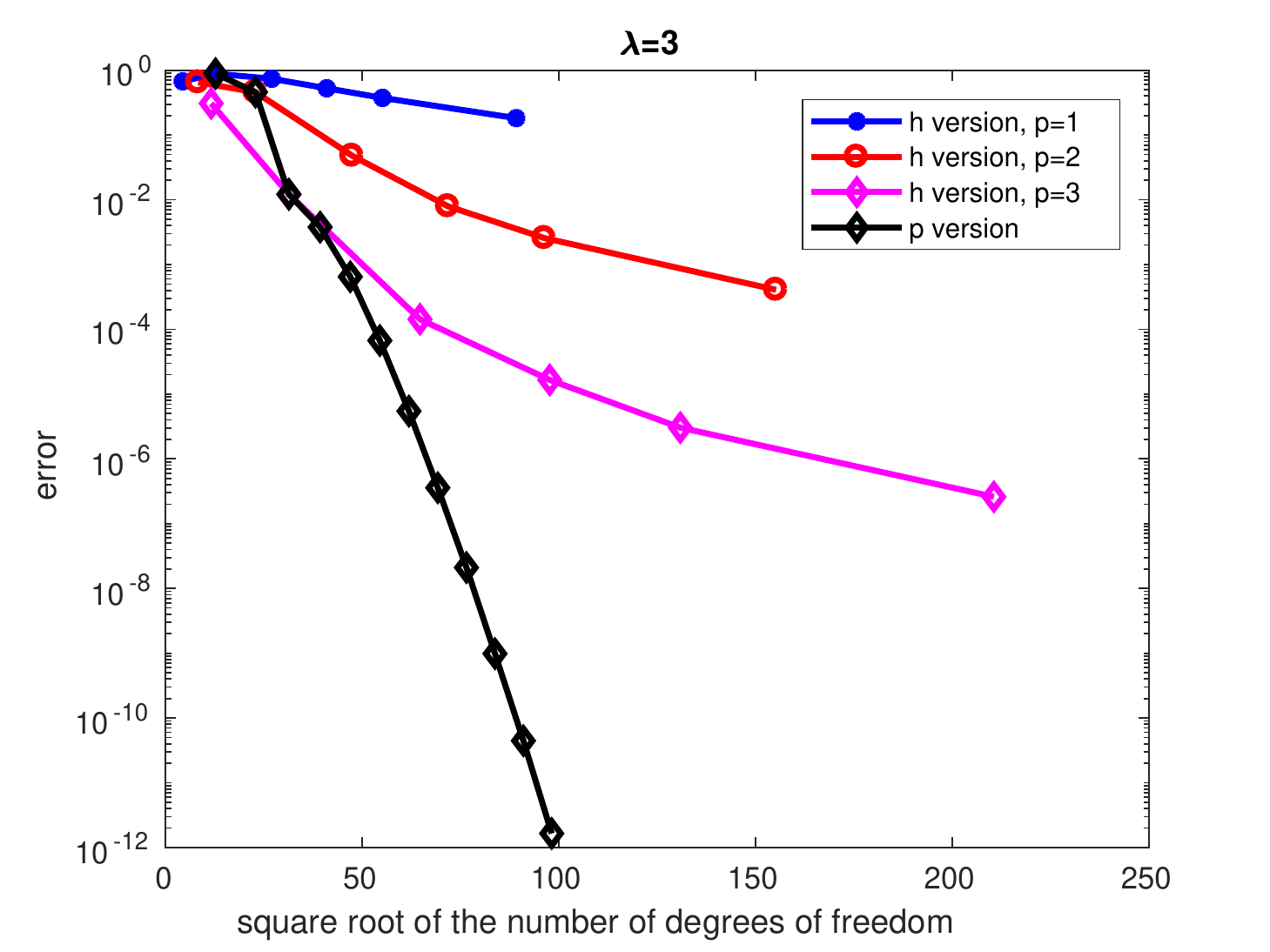}
\includegraphics [angle=0, width=0.45\textwidth]{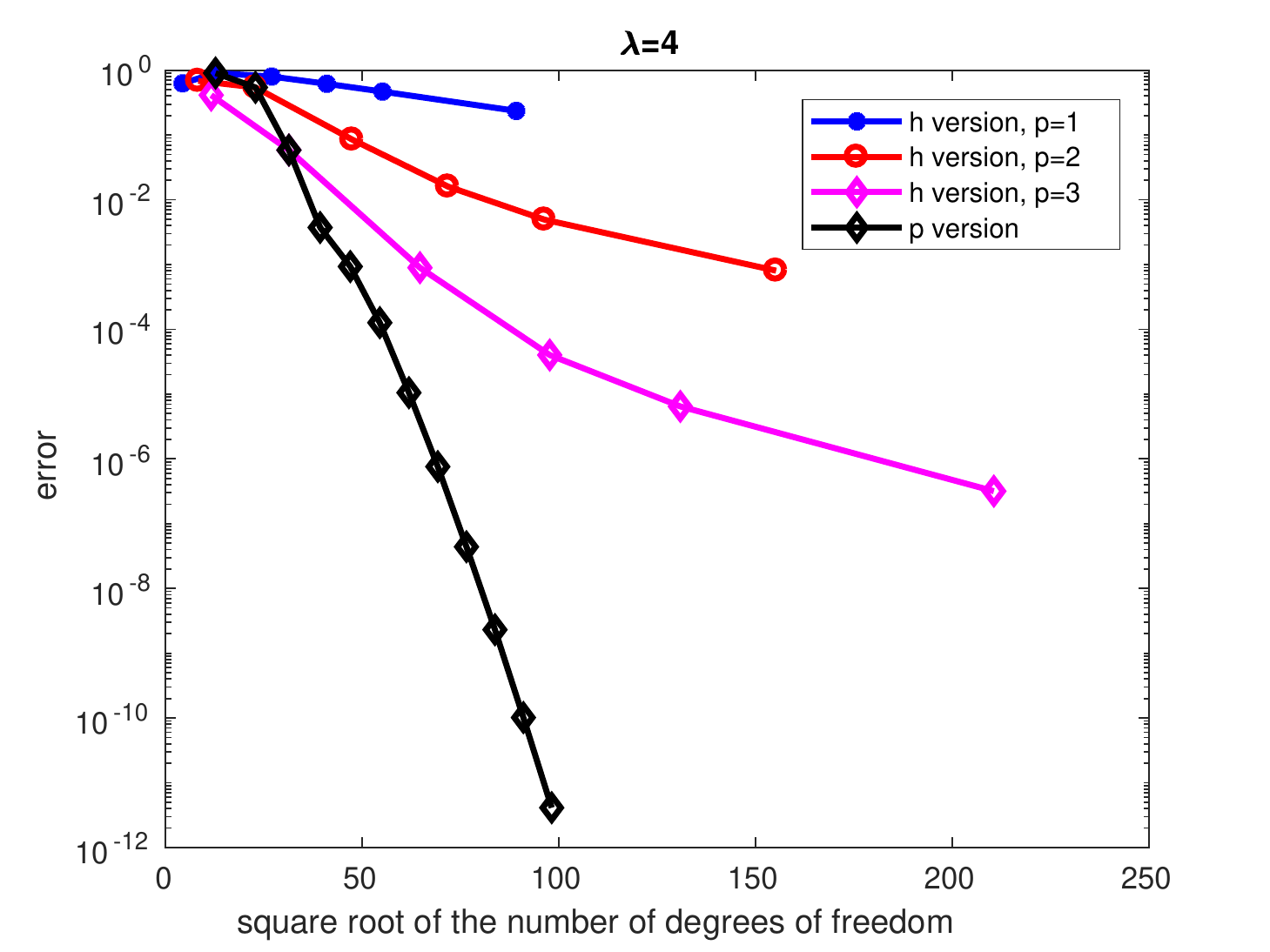}
\caption{Convergence of the error~\eqref{quantity:of:interest} for the first four distinct Dirichlet eigenvalues of the quantum harmonic oscillator operator on the square domain~$\Omega_2$
employing the $\h$-version with $\p=1$, $2$, and~$3$ and the $\p$-version of the method. On the $x$-axis, we plot the square root of the number of degrees of freedom.
The stabilizations~$\widetilde S_1^\E$ and~$\SEz$ are defined in~\eqref{D-recipe:stabilizations} and~\eqref{explicit:stabilization:S0}, respectively.
The polynomial basis dual to the internal moments~\eqref{internal:moments} is $L^2$ orthonormal elementwise.
For both the $\h$- and the $\p$-versions, we employ Voronoi meshes.
The error curves plotted in the figure refer to the
      approximation of the problem eigenvalues as follows:
      \textit{top-left panel:} first eigenvalue; 
      \textit{top-right panel:} second eigenvalue;
      \textit{bottom-left panel:} third eigenvalue;
      \textit{bottom-right panel:} fourth eigenvalue.}
\label{figure:testcase2}
\end{figure}

In Figure~\ref{figure:testcase2}, one can appreciate the exponential decay of the error in terms of the square root of the number of degrees of freedom.

\subsection{$\h\p$-version: the case of singular eigenfunctions} \label{subsection:hp-version}
This section is devoted to the approximation of eigenfunctions that are not analytic, but rather could present some singularities.
For the sake of simplicity, we consider test cases where the singularities are concentrated at isolated points only;
for instance, we avoid the more technical case of edge singularities, where anisotropic mesh refinements could come into play.

In Section~\ref{section:convergence:analysis}, we analyzed the convergence rate of the method and we underlined the suboptimality of the $\p$-version for nonanalytic eigenfunctions, see Theorem~\ref{theorem:convergence:source:problem} and the comments below.
In particular, the bound in Theorem~\ref{theorem:convergence:source:problem}, together with the stray behavior in terms of~$\p$ of the stability constants~$\alpha_*(\p)$ and~$\alpha^*(\p)$,
could lead in principle also to a divergent method for eigenfunctions with sufficiently strong singularities.

Thus, we resort to the $\h\p$-version of the method, which will be described in the forthcoming Section~\ref{subsubsection:hp:VES}.
Such a strategy follows the line of the construction in~\cite{SchwabpandhpFEM, hpVEMcorner, BabuGuo_hpFEM};
by a proper combination of local mesh refinements and increasing the number of the ``polynomial'' degree over the polygonal decomposition in a nonuniform way,
is possible to recover exponential convergence of the method in terms of the cubic root of the number of degrees of freedom.

A couple of test cases dealing with the $\h\p$-version of the method will be presented in Section~\ref{subsubsection:experiments:hp}.

\subsubsection{$\h\p$-virtual element spaces} \label{subsubsection:hp:VES}
In this section, we describe the structure of  $\h\p$-virtual element space, which will be instrumental for the approximation of eigenfunctions with finite Sobolev regularity, see Section~\ref{subsubsection:experiments:hp}.

The idea behind the $\h\p$-refinements is that geometric mesh refinements are performed on the elements where the solution of the target problem is singular,
whereas $\p$-refinements are performed on the elements where the solution is analytic.
Henceforth, we assume for the sake of simplicity that the eigenfunctions are analytic everywhere in~$\Omega$, but at a set~$\M$ of~$M$ points, lying either in the interior of $\Omega$ or on its boundary.

As a first step in this construction, we introduce the concept of layers associated with~$\M$ of a sequence of meshes~$\{\taun\}_{n\in \mathbb N}$.
Given $n\in \mathbb N$, we assume that the mesh~$\taun$ consists of~$n+1$ layers, where the~$0$-th layer~$L_0^n$ consists of all the elements abutting the points in~$\M$.
The other layers are defined by induction as
\begin{equation*}
L_j^n = \left\{  \E_1 \in \taun \mid \overline {\E_1} \cap \overline {\E_2}\ne \emptyset \text{ for some }\E_2\in L^n_{ j -1},\, \E_1 \notin \cup_{i=0}^{j-1} L_i^n    \right\} \quad \forall j=1,\dots,n.
\end{equation*}
Next, we introduce the concept of geometrically graded meshes~$\{\taun\}_{n\in \mathbb N}$. For all $n\in \mathbb N$, there exists a (grading) parameter $\sigma\in (0,1)$ such that
\[
\hE \approx \sigma ^{n-j} \quad \text{in } \E \in L^n_j.
\]
Therefore, geometrically graded meshes are characterized by very ``tiny'' elements abutting the ``singular'' points in~$\M$, and by elements increasing their size geometrically, when increasing the index of the layer they belong to.
Roughly speaking,  the ``tiny'' elements guarantee good approximation properties where the exact solution is singular.
In Figures~\ref{figure:L-shaped} and~\ref{figure:hp:checkerboard}, we have depicted the first three meshes of two sequences of geometric graded meshes; we have highlighted different layers in different colors.

The local ``polynomial'' degrees are distributed in a nonuniform way. More precisely, we fix a parameter $\mu \in \mathbb N$ and define a vector $\pbold \in \mathbb N^{\card (\taun)}$, such that
\begin{equation} \label{hp:degrees:of:accuracy}
\pbold _\ell = \mu (j+1) \quad \text{if} \quad \E_\ell \in L_j^n\quad \forall \ell=1,\dots, \card (\taun).
\end{equation}
The idea behind this choice is that the local ``polynomial'' degree of the method on~$\VnE$ increases linearly when increasing the layer index. This is sufficient in standard $\h\p$-methods \cite{SchwabpandhpFEM, hpVEMcorner, BabuGuo_hpFEM}
to get exponential convergence in terms of the cubic root of the number of degrees of freedom.

Next, we construct another vector $\pboldEpsilon \in \mathbb N^{\card(\mathcal E_n)}$, whose entries are defined by
\begin{equation} \label{maximum:rule}
\pboldEpsilon{}_{\widetilde \ell} =
\begin{cases}
\max(\pbold_{\ell_1},\pbold_{\ell_2})	& \text{if } \e_{\widetilde \ell} \in \mathcal E_n^I \text{ and } \e_{\widetilde \ell} \subseteq \partial \E_{\ell_1} \cap \partial \E_{\ell_2}\\
\pbold_\ell 					& \text{if } \e_{\widetilde \ell} \in \mathcal E_n^B \text{ and } \e_{\widetilde \ell}  \subset \partial \E_\ell,
\end{cases}
\end{equation}
where we recall that~$\mathcal E_n^B$ and~$\mathcal E_n^I$ denote the set of boundary and internal edges of~$\taun$, respectively.

The $\h\p$-virtual element spaces are consequently defined by considering the Laplacian in  the space of polynomials of degree~$\p_\ell$ on the element~$\E_\ell$, for all $\ell=1,\dots,\card(\taun)$,
fixing piecewise continuous polynomial Dirichlet traces over the edges with degree chosen accordingly to the maximum rule~\eqref{maximum:rule},
and then imposing the enhancing constraints locally as in~\eqref{local:enhanced:space}.

Following the lines of~\cite[Section 5]{hpVEMcorner}, it is possible to prove that, employing the $\h\p$-spaces, the error $\vert u - \un \vert_{1,\Omega}$
converges exponentially in terms of the cubic root of the number of degrees of freedom, also if the estimates in Theorem~\ref{theorem:convergence:source:problem} are severely suboptimal in~$\p$.
The important point when trying to recover exponential convergence is that the suboptimal factor haunting the $\h\p$-version of the VEM grows at most algebraically in~$\p$.

Since the matter is technical and follows broadly by combining the techniques of~\cite{hpVEMcorner} and the results in Section~\ref{section:convergence:analysis},
we limit ourselves to present here the numerical results.
As an interesting side remark, we underline that so far the construction of the $\h\p$-strategy was based on a priori knowledge of the singular behavior of the eigenfunctions.
One could also build $\h\p$-spaces in an adaptive fashion, employing for instance residual-based a posteriori error analysis, as done in~\cite{hpVEMapos}.

\subsubsection{Numerical experiments} \label{subsubsection:experiments:hp}
In this section, we present numerical experiments on a couple of test cases where the eigenfunctions have (possibly) finite Sobolev regularity, at some isolated points.

In particular, we compare the performance of the $\h$- and of the $\h\p$-versions.
In the first test case, the singular behavior is due to the shape of the physical domain, which is assumed to be L-shaped.
In the second, it is instead due to the discontinuity of the diffusivity tensor~$\K$; namely, we will consider the so-called \emph{checkerboard benchmark}, see e.g.~\cite{CiarletJamelotKpadonou}.

The $\h$-version is performed always employing uniform Cartesian meshes. In the checkerboard case, the Cartesian meshes are assumed to be conforming with respect to the discontinuities of the diffusivity tensor.
We also underline that the construction of $\h\p$-spaces benefit from the possibility of using polygonal meshes, see Figures~\ref{figure:L-shaped} and~\ref{figure:hp:checkerboard}.
\paragraph*{\texttt{Test case 3}: Laplace on L-shaped domain}
In this test case, we fix as a physical domain, the L-shaped domain $\Omega_3 = (-1,1)^2 \setminus (-1,0]^2$. We look for eigenvalues of the Laplace operator; thus, we set $\K=\mathbb I$ (that is, the identity matrix) and $\V=0$ in~\eqref{continuous:problem:strong}.
For what concerns the $\h\p$-version, we consider a distribution of the ``polynomial'' degrees as in~\eqref{hp:degrees:of:accuracy}, picking~$\mu=1$.
We consider here homogeneous Neumann boundary conditions; note that the method when imposing Neumann boundary conditions is defined similarly to~\eqref{VEM:eigenvalue}, see e.g.~\cite[Remark 2.2]{pVEMmultigrid}.

The geometrically graded meshes are built by taking as a grading parameter $\sigma=0.5$, the geometric refinement is towards the re-entrant corner.
The first three meshes, together with the corresponding distribution of the ``polynomial'' degrees, are depicted in Figure~\ref{figure:L-shaped}.
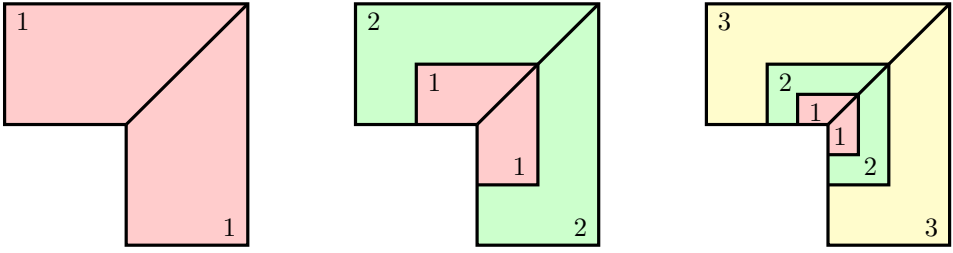
\begin{figure}  [h]
\begin{center}
\begin{minipage}{0.30\textwidth}
\begin{tikzpicture}[scale=0.4]
\fill[red, opacity=0.2] (0,0) -- (0,-4) -- (4,-4) -- (4,4) -- (-4,4) -- (-4,0) -- (0,0);
\draw[black, very thick, -] (0,0) -- (0,-4) -- (4,-4) -- (4,4) -- (-4,4) -- (-4,0) -- (0,0);
\draw[black, very thick, -] (0,0) -- (4,4);
\draw(-3.4,3.4) node[black] {{$1$}}; \draw(3.4, -3.4) node[black] {{$1$}};
\end{tikzpicture}
\end{minipage}
\begin{minipage}{0.30\textwidth}
\begin{tikzpicture}[scale=0.4]
\fill[green, opacity=0.2] (-4,0) -- (-2,0)-- (-2,2)-- (2,2)-- (2,-2)-- (0,-2)-- (0,-4)-- (4,-4) -- (4,4) -- (-4,4);
\fill[red, opacity=0.2] (0,-2) -- (2,-2) -- (2,2) -- (-2,2) -- (-2,0) -- (0,0);
\draw[black, very thick, -] (0,0) -- (0,-4) -- (4,-4) -- (4,4) -- (-4,4) -- (-4,0) -- (0,0);
\draw[black, very thick, -] (0,-2) -- (2,-2) -- (2,2) -- (-2,2) -- (-2,0);
\draw[black, very thick, -] (0,0) -- (4,4);
\draw(-3.4,3.4) node[black] {{$2$}}; \draw(3.4, -3.4) node[black] {{$2$}};
\draw(-1.4,1.4) node[black] {{$1$}}; \draw(1.4, -1.4) node[black] {{$1$}};
\end{tikzpicture}
\end{minipage}
\begin{minipage}{0.33\textwidth}
\begin{tikzpicture}[scale=0.4]
\fill[yellow, opacity=0.2] (-4,0) -- (-2,0)-- (-2,2)-- (2,2)-- (2,-2)-- (0,-2)-- (0,-4)-- (4,-4) -- (4,4) -- (-4,4);
\fill[green, opacity=0.2] (-2,0) -- (-1,0)-- (-1,1)-- (1,1)-- (1,-1)-- (0,-1)-- (0,-2)-- (2,-2) -- (2,2) -- (-2,2);
\fill[red, opacity=0.2] (0,-1) -- (1,-1) -- (1,1) -- (-1,1) -- (-1,0) -- (0,0);
\draw[black, very thick, -] (0,0) -- (0,-4) -- (4,-4) -- (4,4) -- (-4,4) -- (-4,0) -- (0,0);
\draw[black, very thick, -] (0,-2) -- (2,-2) -- (2,2) -- (-2,2) -- (-2,0);
\draw[black, very thick, -] (0,-1) -- (1,-1) -- (1,1) -- (-1,1) -- (-1,0);
\draw[black, very thick, -] (0,0) -- (4,4);
\draw(-3.4,3.4) node[black] {{$3$}}; \draw(3.4, -3.4) node[black] {{$3$}};
\draw(-1.4,1.4) node[black] {{$2$}}; \draw(1.4, -1.4) node[black] {{$2$}};
\draw(-.4,.4) node[black] {{$1$}}; \draw(.4, -.4) node[black] {{$1$}};
\end{tikzpicture}
\end{minipage}
\end{center}
\caption{First three meshes~$\mathcal T_0$,~$\mathcal T_1$, and~$\mathcal T_2$, and the distribution of the ``polynomial'' degree for the approximation of eigenfunctions and eigenvalues of the Laplace operator on the L-shaped domain~$\Omega_3$.
The $0$-th, the $1$-st, and the $2$-nd layers are highlighted in \red{red}, \green{green}, and \yellow{yellow} colors, respectively.
The layers are constructed refining only towards the reentrant corner.}
\label{figure:L-shaped}
\end{figure}

In order to test the method, we compare our discrete eigenvalues with those described in Dauge's website~\cite{Dauge_benchmark}.
The numerical results, describing the convergence to the first four distinct eigenvalues
when employing the $\h$- and the $\h\p$-versions of the method on a sequence of uniform Cartesian meshes and on the sequence of graded meshes in Figure~\ref{figure:L-shaped}, are depicted in Figure~\ref{figure:testcase3}.
\begin{figure}  [h]
\centering
\includegraphics [angle=0, width=0.45\textwidth]{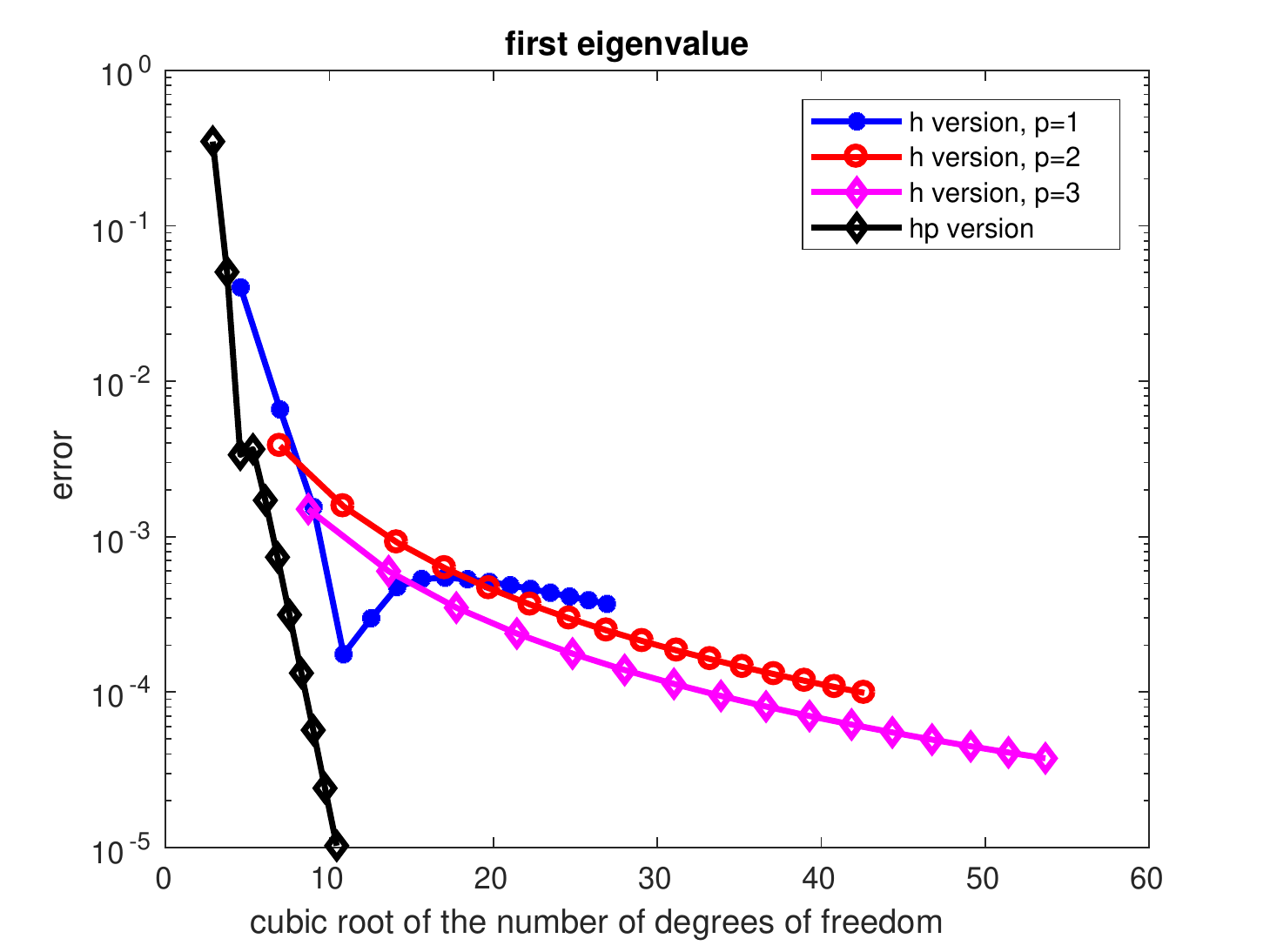}
\includegraphics [angle=0, width=0.45\textwidth]{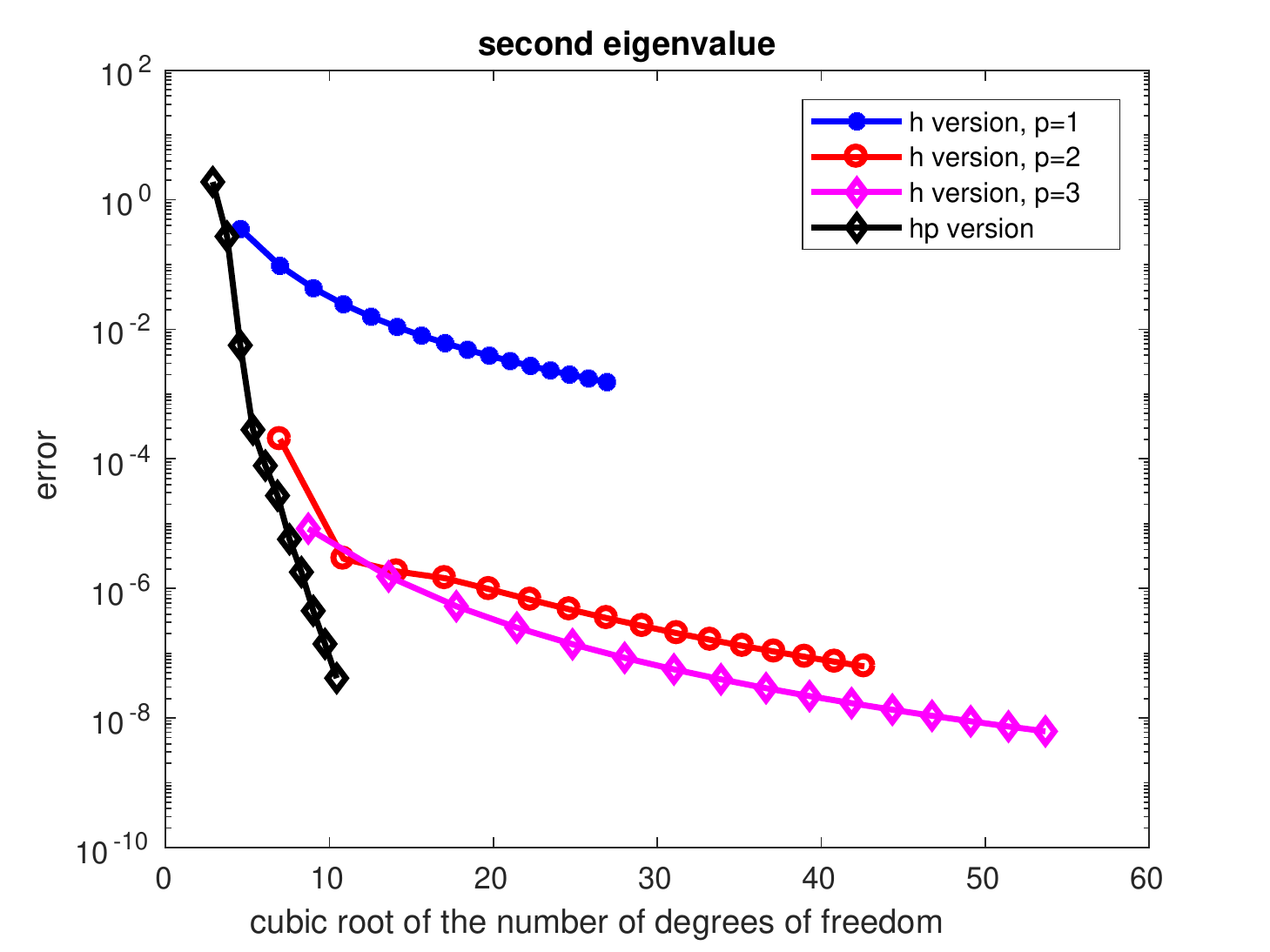}
\includegraphics [angle=0, width=0.45\textwidth]{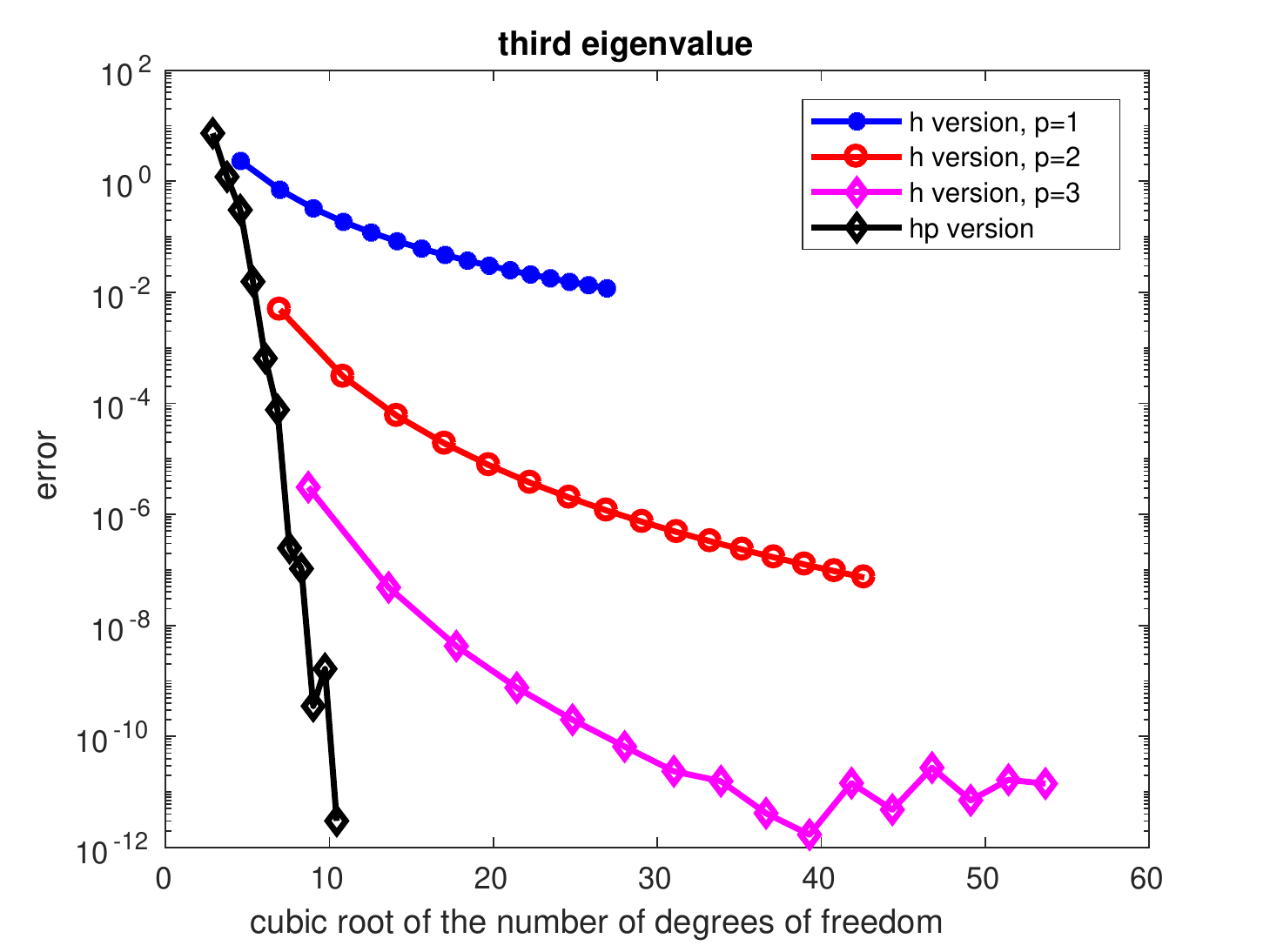}
\includegraphics [angle=0, width=0.45\textwidth]{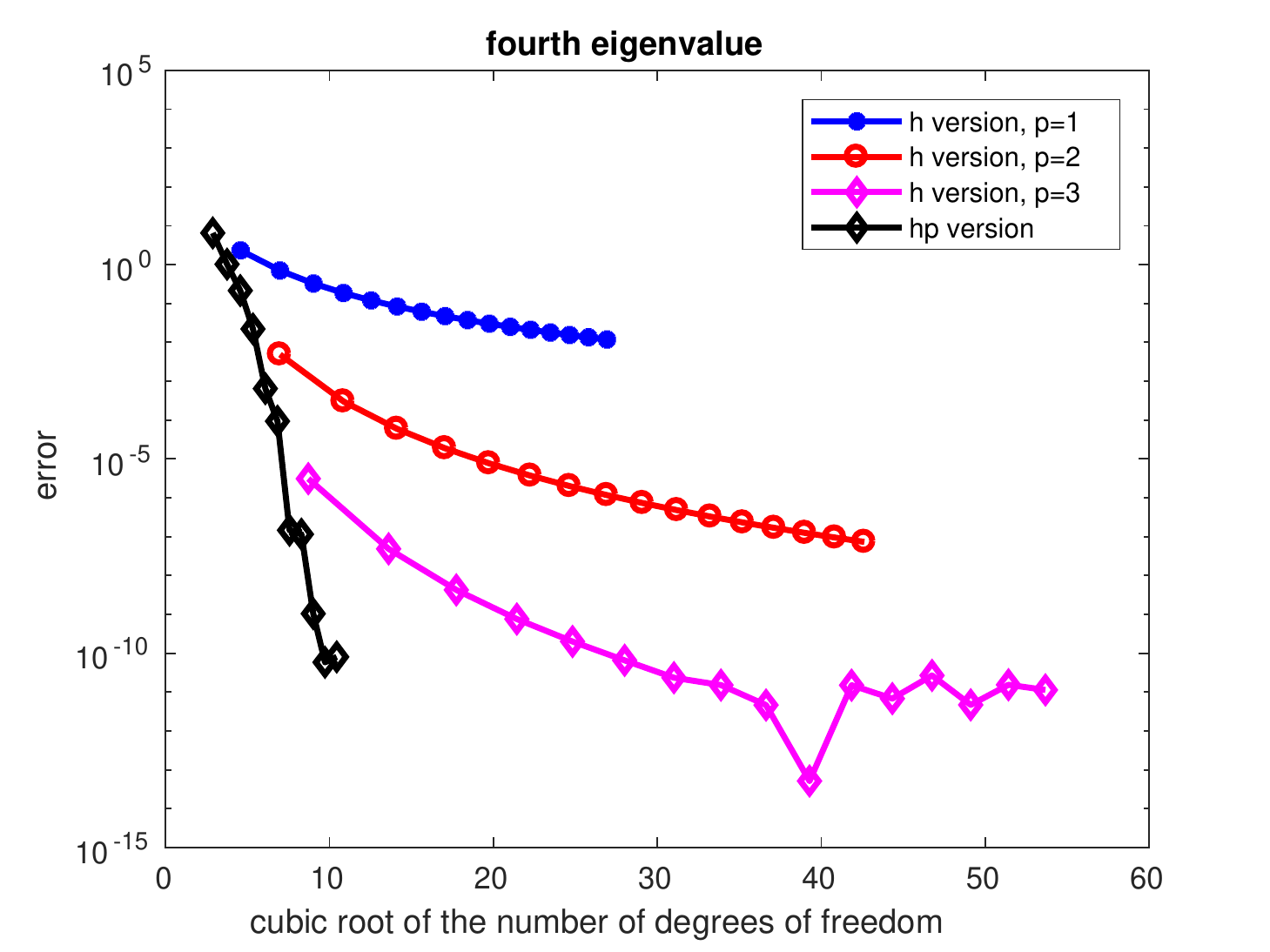}
\caption{Convergence of the error~\eqref{quantity:of:interest} for the first four distinct Neumann eigenvalues of the Laplace operator on the L-shaped domain~$\Omega_3$
employing the $\h$-version with $\p=1$, $2$, and~$3$ and the $\h\p$-version of the method. On the $x$-axis, we plot the cubic root of the number of degrees of freedom.
The stabilizations~$\widetilde S_1^\E$ and~$\SEz$ are defined in~\eqref{D-recipe:stabilizations} and~\eqref{explicit:stabilization:S0}, respectively.
The polynomial basis dual to the internal moments~\eqref{internal:moments} is $L^2$ orthonormal elementwise.
For the $\h$-version we employ uniform Cartesian meshes, for the $\p$-versions, we employ the meshes in Figure~\ref{figure:L-shaped}.
{ The error curves plotted in the figure refer to the
      approximation of the problem eigenvalues as follows:
      \textit{top-left panel:} first eigenvalue; 
      \textit{top-right panel:} second eigenvalue;
      \textit{bottom-left panel:} third eigenvalue;
      \textit{bottom-right panel:} fourth eigenvalue.}}
\label{figure:testcase3}
\end{figure}

From Figure~\ref{figure:testcase3}, it is possible to appreciate the exponential convergence of the $\h\p$-version of the method in terms of the cubic root of the number of degrees of freedom,
which indeed resembles its counterpart for the source problem, see~\cite[Theorem 3]{hpVEMcorner}.

\paragraph*{\texttt{Test case 4}: checkerboard diffusivity tensor}
As a final test, we consider as a physical domain the square $\Omega_4=(-1,1)^2$,
and we focus our attention to the checkerboard benchmark, that is, we fix in~\eqref{continuous:problem:strong} $\V=0$ and we define the diffusivity tensor as follows:
\begin{equation} \label{cb:diffusivity}
\K(x,y) = \varrho(x,y) \mathbb I \quad \text{where} \quad \varrho(x,y) =
\begin{cases}
\varepsilon & \text{in } (-1,0)^2 \cup (0,1)^2\\
1 & \text{elsewhere}.\\
\end{cases}
\end{equation}
The eigenfunctions could be singular at the center of the checkerboard, that is, at the origin of the axes.
Therefore, in addition to the $\h$-version of the method, we also consider the $\h\p$-version with a distribution of the ``polynomial'' degrees as in~\eqref{hp:degrees:of:accuracy} with $\mu=1$,
and geometrically graded meshes with grading parameter $\sigma=0.5$, as those depicted in Figure~\ref{figure:hp:checkerboard}.
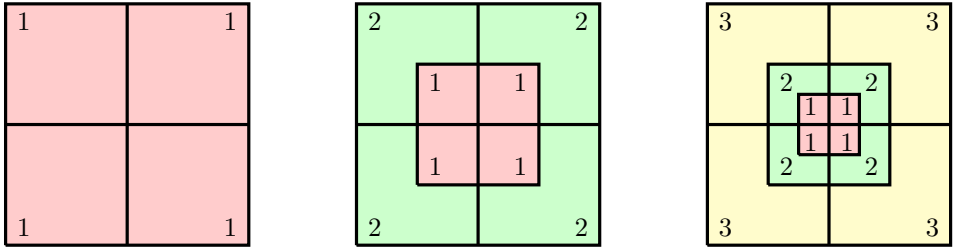
\begin{figure}  [h]
\begin{center}
\begin{minipage}{0.30\textwidth}
\begin{tikzpicture}[scale=0.4]
\fill[red, opacity=0.2]  (-4,-4) -- (4,-4) -- (4,4) -- (-4,4) -- (-4,-4);
\draw[black, very thick, -] (-4,-4) -- (4,-4) -- (4,4) -- (-4,4) -- (-4,-4);
\draw[black, very thick, -] (0,-4) -- (0,4); \draw[black, very thick, -] (-4,0) -- (4,0);
\draw(-3.4,-3.4) node[black] {{$1$}}; \draw(-3.4,3.4) node[black] {{$1$}}; \draw(3.4,-3.4) node[black] {{$1$}}; \draw(3.4,3.4) node[black] {{$1$}};
\end{tikzpicture}
\end{minipage}
\begin{minipage}{0.30\textwidth}
\begin{tikzpicture}[scale=0.4]
\fill[green, opacity=0.2]  (-4,-4) -- (4,-4) -- (4,4) -- (-4,4) -- (-4,-4);
\fill[white, opacity=1]  (-2,-2) -- (2,-2) -- (2,2) -- (-2,2) -- (-2,-2);
\fill[red, opacity=0.2]  (-2,-2) -- (2,-2) -- (2,2) -- (-2,2) -- (-2,-2);
\draw[black, very thick, -] (-4,-4) -- (4,-4) -- (4,4) -- (-4,4) -- (-4,-4);
\draw[black, very thick, -] (-2,-2) -- (2,-2) -- (2,2) -- (-2,2) -- (-2,-2);
\draw[black, very thick, -] (0,-4) -- (0,4); \draw[black, very thick, -] (-4,0) -- (4,0);
\draw(-3.4,-3.4) node[black] {{$2$}}; \draw(-3.4,3.4) node[black] {{$2$}}; \draw(3.4,-3.4) node[black] {{$2$}}; \draw(3.4,3.4) node[black] {{$2$}};
\draw(-1.4,-1.4) node[black] {{$1$}}; \draw(-1.4,1.4) node[black] {{$1$}}; \draw(1.4,-1.4) node[black] {{$1$}}; \draw(1.4,1.4) node[black] {{$1$}};
\end{tikzpicture}
\end{minipage}
\begin{minipage}{0.33\textwidth}
\begin{tikzpicture}[scale=0.4]
\fill[yellow, opacity=0.2]  (-4,-4) -- (4,-4) -- (4,4) -- (-4,4) -- (-4,-4);
\fill[white, opacity=1]  (-2,-2) -- (2,-2) -- (2,2) -- (-2,2) -- (-2,-2);
\fill[green, opacity=0.2]  (-2,-2) -- (2,-2) -- (2,2) -- (-2,2) -- (-2,-2);
\fill[white, opacity=1]  (-1,-1) -- (1,-1) -- (1,1) -- (-1,1) -- (-1,-1);
\fill[red, opacity=0.2]  (-1,-1) -- (1,-1) -- (1,1) -- (-1,1) -- (-1,-1);
\draw[black, very thick, -] (-4,-4) -- (4,-4) -- (4,4) -- (-4,4) -- (-4,-4);
\draw[black, very thick, -] (-2,-2) -- (2,-2) -- (2,2) -- (-2,2) -- (-2,-2);
\draw[black, very thick, -] (-1,-1) -- (1,-1) -- (1,1) -- (-1,1) -- (-1,-1);
\draw[black, very thick, -] (0,-4) -- (0,4); \draw[black, very thick, -] (-4,0) -- (4,0);
\draw(-3.4,-3.4) node[black] {{$3$}}; \draw(-3.4,3.4) node[black] {{$3$}}; \draw(3.4,-3.4) node[black] {{$3$}}; \draw(3.4,3.4) node[black] {{$3$}};
\draw(-1.4,-1.4) node[black] {{$2$}}; \draw(-1.4,1.4) node[black] {{$2$}}; \draw(1.4,-1.4) node[black] {{$2$}}; \draw(1.4,1.4) node[black] {{$2$}};
\draw(-.6,-.6) node[black] {{$1$}}; \draw(-.6,.6) node[black] {{$1$}}; \draw(.6,-.6) node[black] {{$1$}}; \draw(.6,.6) node[black] {{$1$}};
\end{tikzpicture}
\end{minipage}
\end{center}
\caption{First three meshes~$\mathcal T_0$,~$\mathcal T_1$, and~$\mathcal T_2$, and the distribution of the ``polynomial'' degree for the approximation of eigenfunctions and eigenvalues of the checkerboard benchmark on the square domain~$\Omega_4$.
The $0$-th, the $1$-st, and the $2$-nd layers are highlighted in \red{red}, \green{green}, and \yellow{yellow} colours, respectively.
The layers are constructed refining only towards the center of the checkerboard.}
\label{figure:hp:checkerboard}
\end{figure}

It is worthwhile to underline that the regularity of the solution to the associated source problem~\eqref{weak:formulation:continuous:source} decreases as $\varepsilon \rightarrow 0$ in~\eqref{cb:diffusivity}.
In~\cite[Figure 1]{CiarletJamelotKpadonou}, such a regularity was pinpointed for some specific choices of~$\varepsilon$.

For this test case, we fix homogeneous Neumann boundary conditions. In order to test the method, we compare our discrete eigenvalues with those in~\cite{Dauge_benchmark}.
In Figures~\ref{figure:testcase4:part1} and~\ref{figure:testcase4:part2}, we show the performance of the $\h$- and of the $\h\p$-versions on a sequence of uniform Cartesian meshes (conforming with respect to the discontinuities of the diffusivity tensor)
and on the sequence of graded meshes in Figure~\ref{figure:hp:checkerboard}, for two different ``checkerboard'' parameters~$\varepsilon$ in~\eqref{cb:diffusivity}, respectively.
More precisely, we study the convergence to the first four distinct eigenvalues, with parameters~$\varepsilon=2$ and~$10^8$.
\begin{figure}  [h]
\centering
\includegraphics [angle=0, width=0.45\textwidth]{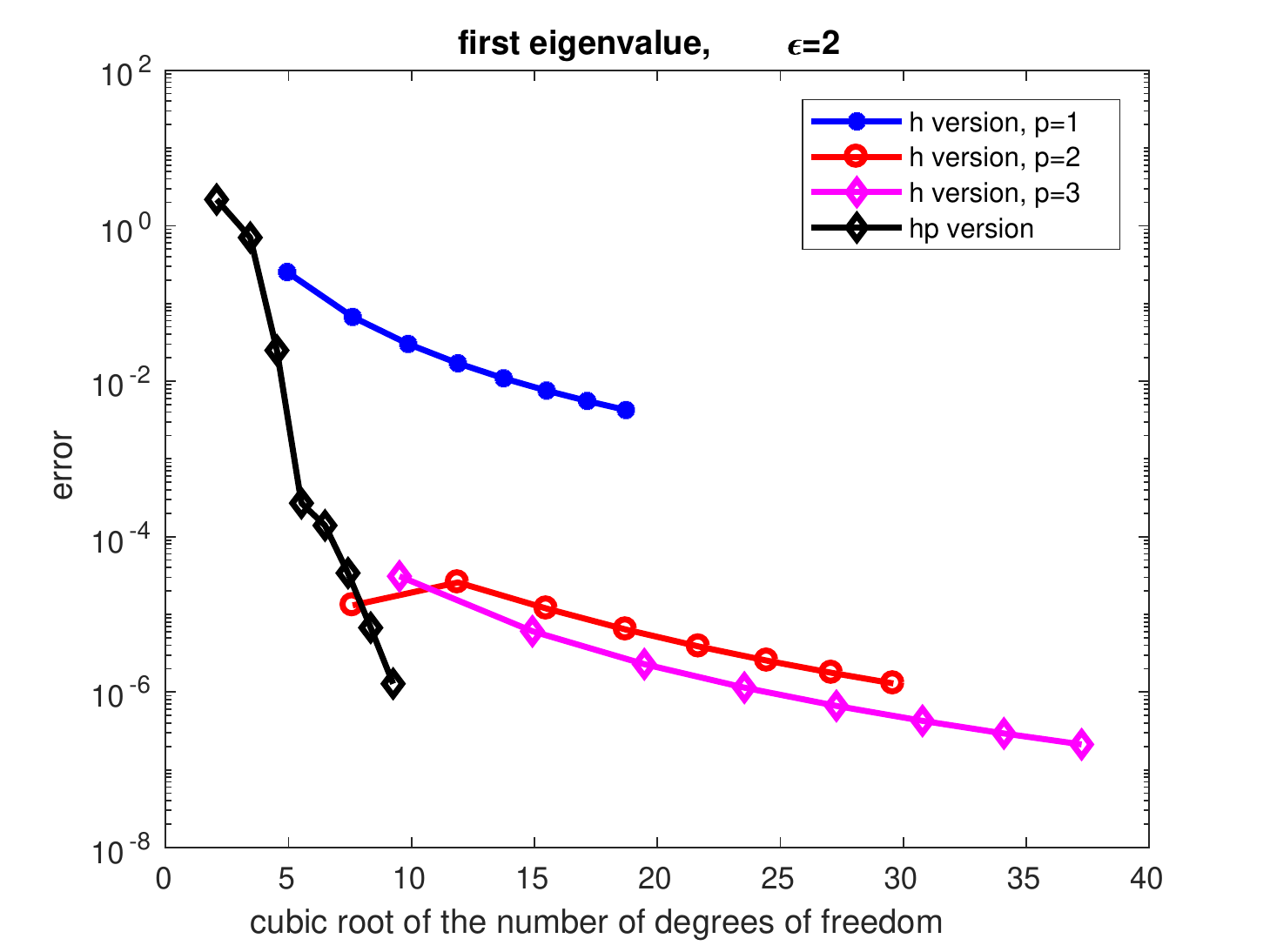}
\includegraphics [angle=0, width=0.45\textwidth]{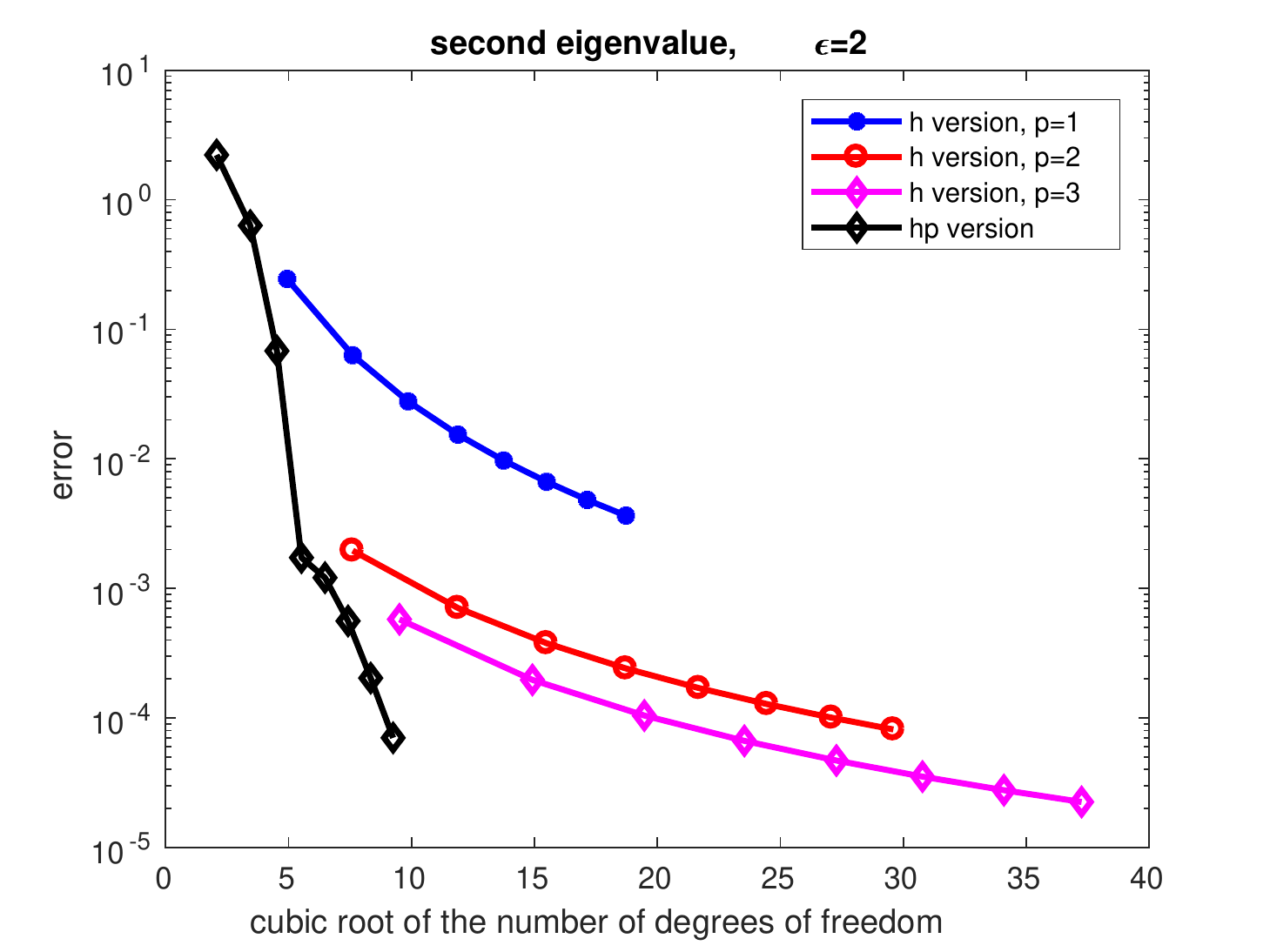}
\includegraphics [angle=0, width=0.45\textwidth]{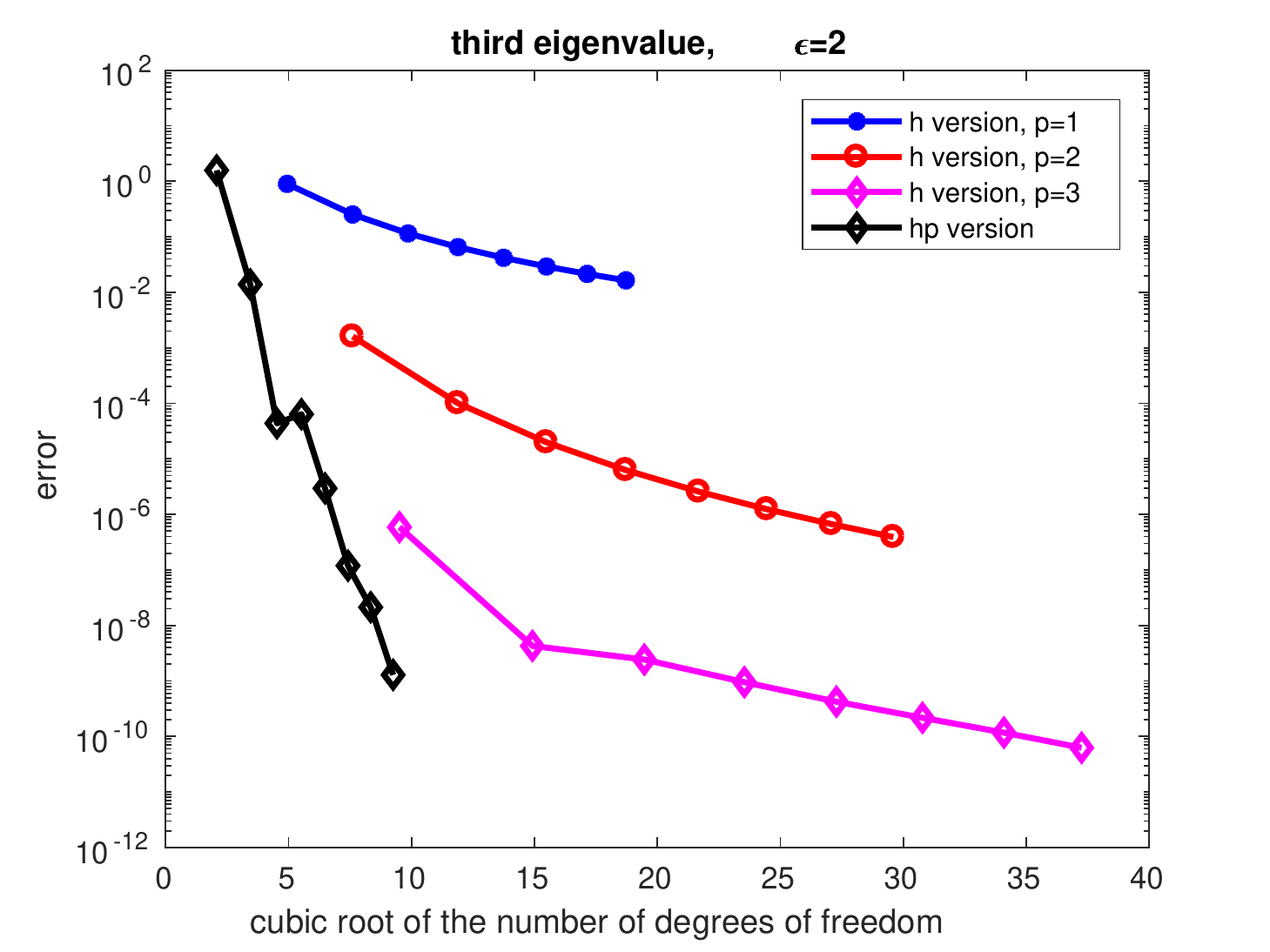}
\includegraphics [angle=0, width=0.45\textwidth]{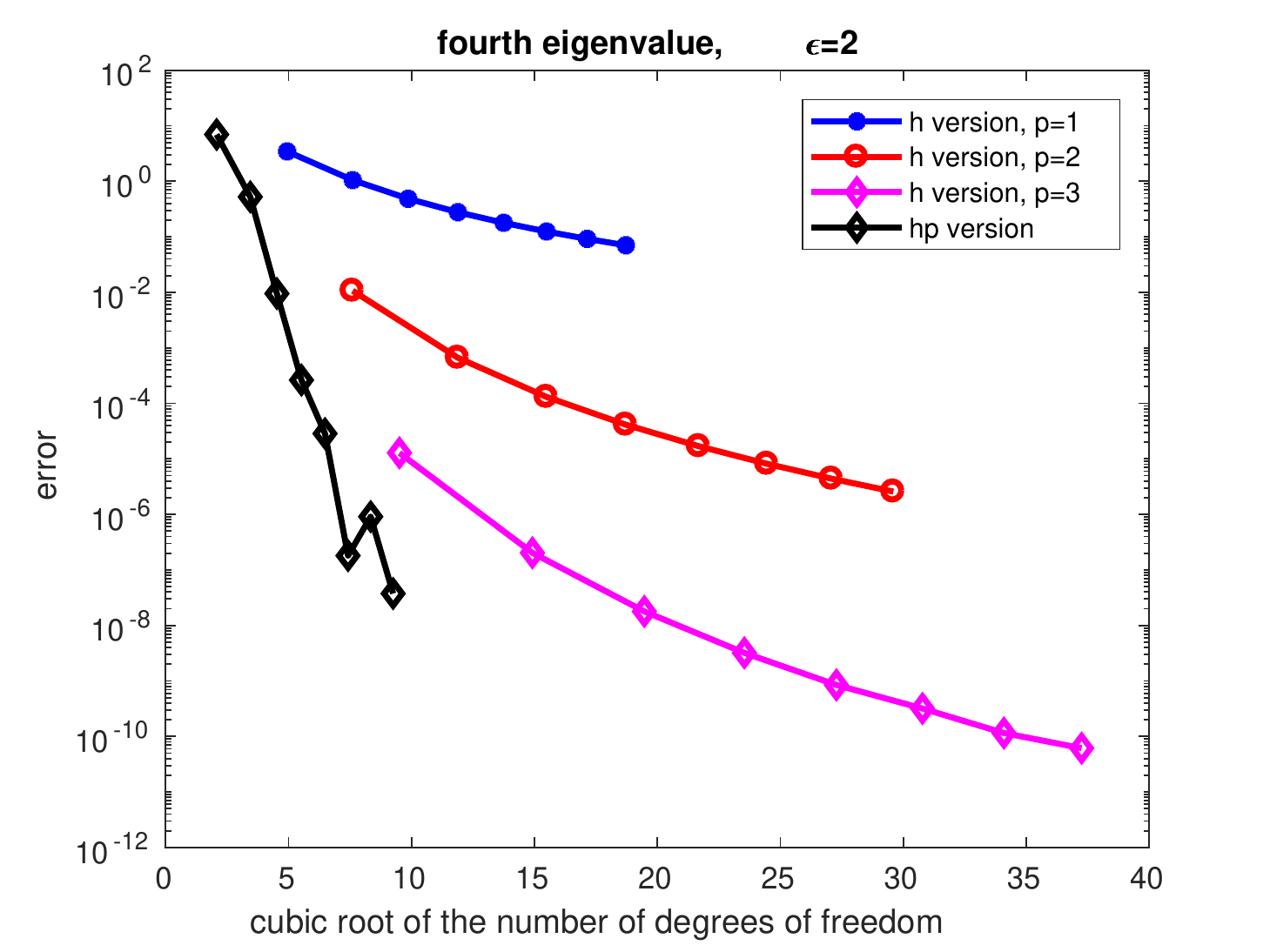}
\caption{Convergence of the error~\eqref{quantity:of:interest} for the first distinct Neumann four eigenvalues of the ``checkerboard'' operator with~$\varepsilon = 2$ on the square domain~$\Omega_4$
employing the $\h$-version with $\p=1$, $2$, and~$3$ and the $\h\p$-version of the method. On the $x$-axis, we plot the cubic root of the number of degrees of freedom.
The stabilizations~$\widetilde S_1^\E$ and~$\SEz$ are defined in~\eqref{D-recipe:stabilizations} and~\eqref{explicit:stabilization:S0}, respectively.
The polynomial basis dual to the internal moments~\eqref{internal:moments} is $L^2$ orthonormal elementwise.
For the $\h$-version we employ uniform Cartesian meshes, for the $\p$-versions, we employ the meshes in Figure~\ref{figure:L-shaped}.
The error curves plotted in the figure refer to the
      approximation of the problem eigenvalues as follows:
      \textit{top-left panel:} first eigenvalue; 
      \textit{top-right panel:} second eigenvalue;
      \textit{bottom-left panel:} third eigenvalue;
      \textit{bottom-right panel:} fourth eigenvalue}
\label{figure:testcase4:part1}
\end{figure}

\begin{figure}  [h]
\centering
\includegraphics [angle=0, width=0.45\textwidth]{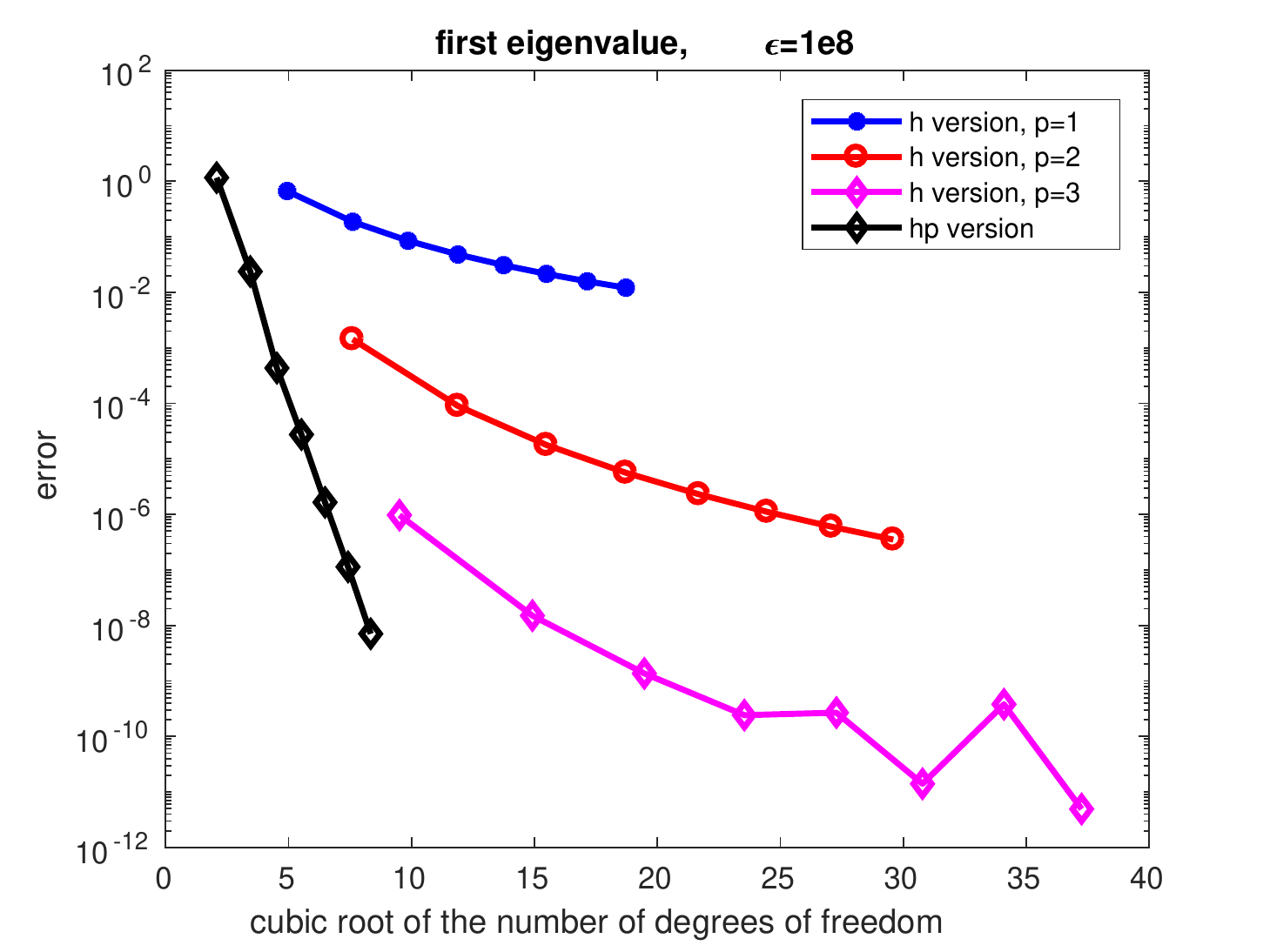}
\includegraphics [angle=0, width=0.45\textwidth]{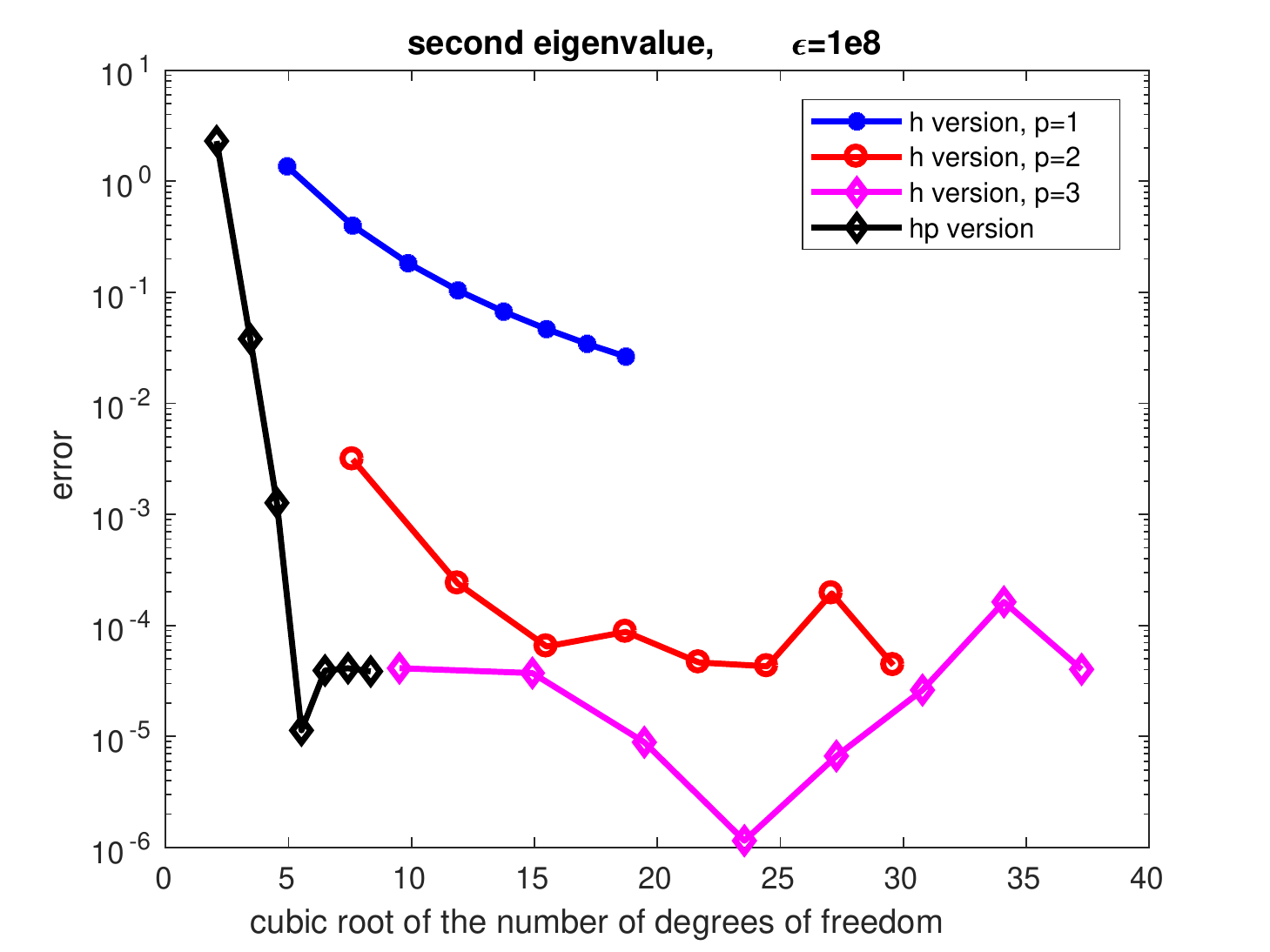}
\includegraphics [angle=0, width=0.45\textwidth]{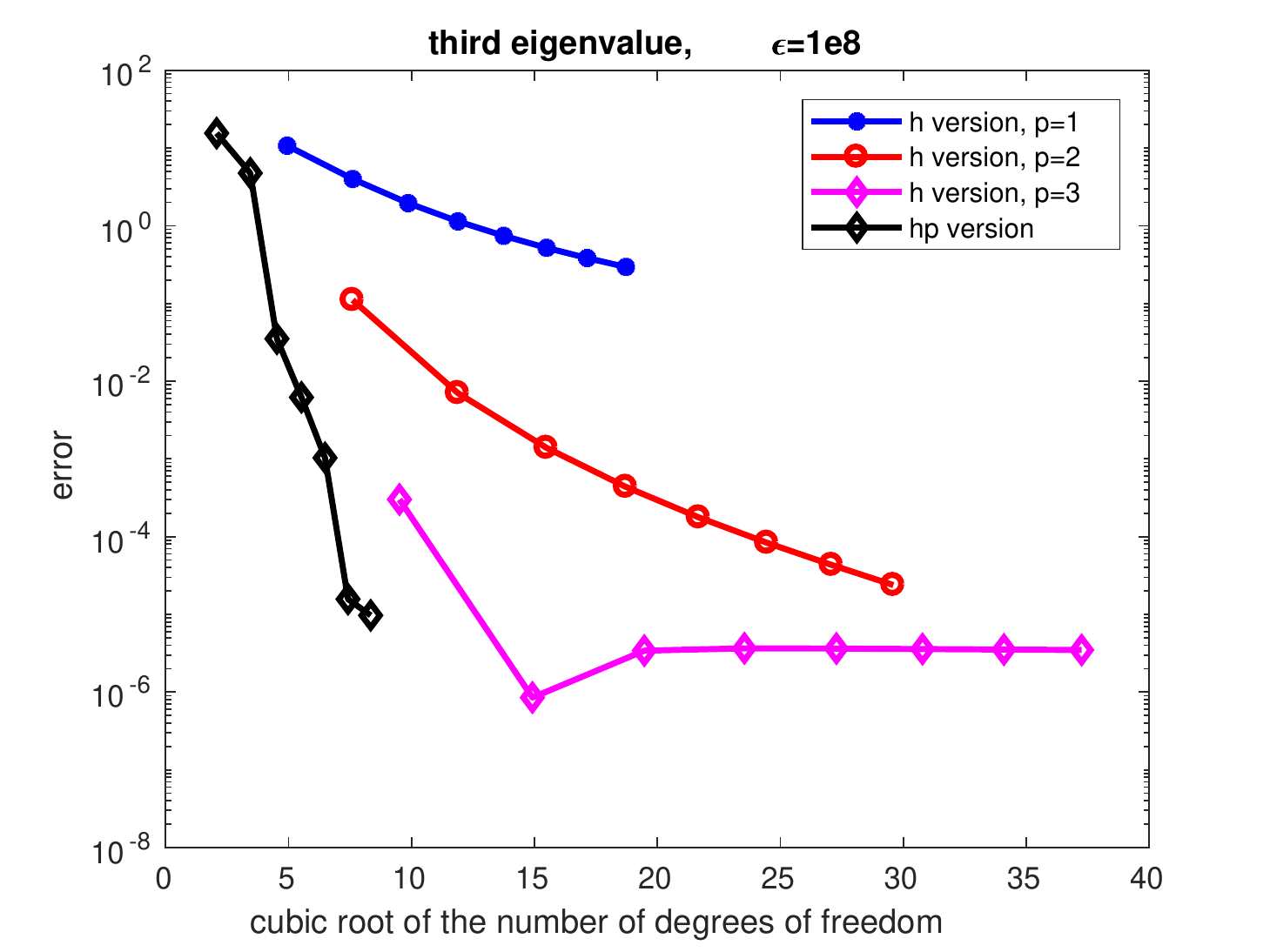}
\includegraphics [angle=0, width=0.45\textwidth]{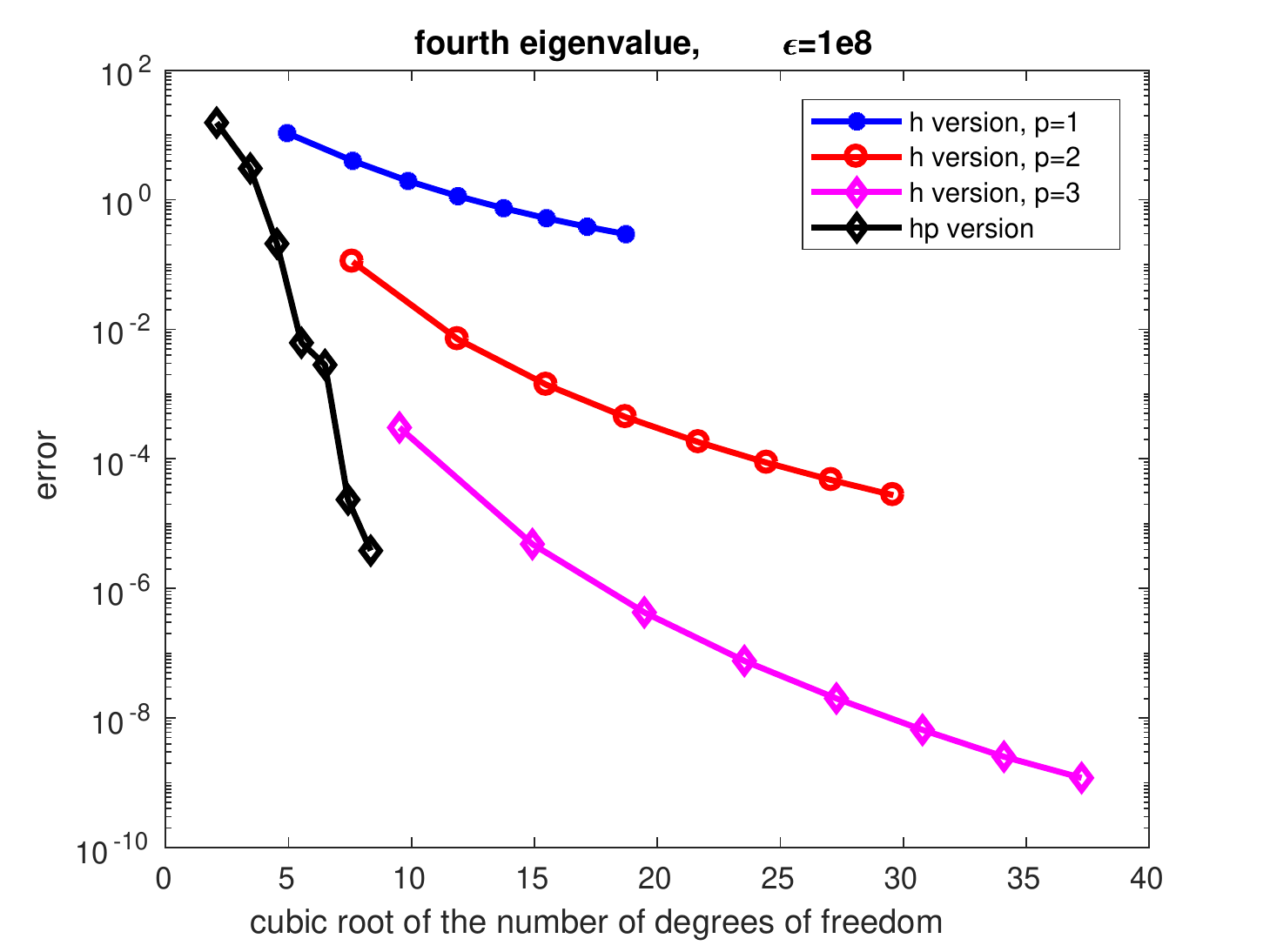}
\caption{Convergence of the error~\eqref{quantity:of:interest} for the first four distinct Neumann eigenvalues of the ``checkerboard'' operator with~$\varepsilon = 10^{8}$ on the square domain~$\Omega_4$
employing the $\h$-version with $\p=1$, $2$, and~$3$ and the $\h\p$-version of the method. On the $x$-axis, we plot the cubic root of the number of degrees of freedom.
The stabilizations~$\widetilde S_1^\E$ and~$\SEz$ are defined in~\eqref{D-recipe:stabilizations} and~\eqref{explicit:stabilization:S0}, respectively.
The polynomial basis dual to the internal moments~\eqref{internal:moments} is $L^2$ orthonormal elementwise.
For the $\h$-version we employ uniform Cartesian meshes, for the $\p$-versions, we employ the meshes in Figure~\ref{figure:L-shaped}.
The error curves plotted in the figure refer to the
      approximation of the problem eigenvalues as follows:
      \textit{top-left panel:} first eigenvalue; 
      \textit{top-right panel:} second eigenvalue;
      \textit{bottom-left panel:} third eigenvalue;
      \textit{bottom-right panel:} fourth eigenvalue.}
\label{figure:testcase4:part2}
\end{figure}

Again, the exponential convergence of the $\h\p$-version of the method in terms of the cubic root of the degrees of freedom is in accordance with~\cite[Theorem]{hpVEMcorner},
which is the analogous result for the source problem.
We underline that the poor convergence rate for the second and third eigenvalues when~$\varepsilon=10^8$ is due to the poor accuracy of the exact eigenvalues computed in~\cite{Dauge_benchmark}.
Importantly, the method is extremely robust for choices of both very high and moderate~$\varepsilon$.

%

\section{Conclusion} \label{section:conclusion}
We analyzed the $\p$-version of the virtual element method for elliptic problems with variable diffusivity tensor and reaction term.
Particular emphasis was stressed on $\p$-best interpolation estimates in enhanced virtual element spaces and on a careful investigation of the stabilization terms. Such analysis was instrumental to derive the a priori $\p$-convergence estimate for eigenvalue problems.
A wide set of numerical results, including experiments with the $\h\p$-version of the method, was presented in order to underline the superiority of the $\p$- and $\h\p$-versions of the method over the $\h$- one.

\paragraph*{Acknowledgements}
Lorenzo Mascotto has been funded by the Austrian Science Fund (FWF) through the project F~65.

The work of Ond\v{r}ej \v{C}ert\'ik and Gianmarco Manzini was supported by the Laboratory Directed Research and Development Program (LDRD), U.S. Department of Energy Office of Science, Office of Fusion Energy Sciences,
and the DOE Office of Science Advanced Scientific Computing Research (ASCR) Program in Applied Mathematics Research, under the auspices of the National Nuclear Security Administration of the U.S. Department of Energy by Los Alamos National Laboratory,
operated by Los Alamos National Security LLC under contract DE-AC52-06NA25396.
This work has been assigned the number LA-UR-18-31762.

The authors are grateful to Dr. Joscha Gedicke from the University of Vienna for useful discussions regarding various aspects of the approximation of eigenvalues by means of Galerkin methods.


{\footnotesize
\bibliography{bibliogr}
}
\bibliographystyle{plain}

\end{document}